\documentclass[12pt,twoside,reqno]{amsart}

\usepackage{amsmath,amsthm,amscd}
\usepackage{amsfonts, amsmath, wasysym}

\usepackage{amssymb}

\usepackage[scr]{rsfso}
\usepackage[english]{babel}
\usepackage[mathscr]{euscript}

\usepackage{stmaryrd}

\usepackage[dvipsnames,svgnames,x11names]{xcolor}
\usepackage[hyphens]{url}
\usepackage[colorlinks=true,linkcolor=Maroon,citecolor=blue,urlcolor=blue,hypertexnames=false,linktocpage]{hyperref}
\usepackage{bookmark}
\usepackage{amsmath,thmtools}
\usepackage{bm}
\usepackage{mathtools}
\mathtoolsset{showonlyrefs} 

\usepackage{fancyhdr}
\usepackage{esint}
\usepackage{enumerate}
\usepackage{enumitem}

\usepackage[scr]{rsfso}
\usepackage{mathtools}
\usepackage{pictexwd,dcpic}
\usepackage{graphicx}

\def\avint{\mathop{\mathchoice{\,\rlap{-}\!\!\int}
		{\rlap{\raise.15em{\scriptstyle -}}\kern-.2em\int}
		{\rlap{\raise.09em{\scriptscriptstyle -}}\!\int}
		{\rlap{-}\!\int}}\nolimits}

\usepackage[labelfont=bf, font=small]{caption}

\newcounter{mgncount}

\declaretheorem[name=Theorem,numberwithin=section]{thm}
\declaretheorem[name=Remark,style=remark,sibling=thm]{rmk}
\declaretheorem[name=Lemma,sibling=thm]{lemma}
\declaretheorem[name=Proposition,sibling=thm]{prop}

\declaretheorem[name=Definition,style=definition,sibling=thm]{defn}
\declaretheorem[name=Corollary,sibling=thm]{cor}

\numberwithin{equation}{section}

\DeclareMathOperator{\scal}{scal}

\newcommand{\ov}{\overline}

\newcommand{\Rm}{\operatorname{Rm}}
\newcommand{\Ric}{\operatorname{Ric}}

\newcommand{\vol}{\operatorname{vol}}

\newcommand{\eps}{\varepsilon}

\newcommand{\ds}{\displaystyle}

\DeclareMathOperator{\id}{id}

\DeclareMathOperator{\diam}{diam}

\newcommand{\mc}{\mathcal}

\usepackage{dsfont}
 
\newcommand{\R}{\mathds{R}}

\newtheorem*{claimA}{Claim A}
\newtheorem*{claimB}{Claim B}

\def\restrict#1{\raise-.5ex\hbox{\ensuremath|}_{#1}}

\usepackage{scalerel}[2014/03/10]
\usepackage[usestackEOL]{stackengine}
\def\intavg{\,\ThisStyle{\ensurestackMath{%
			\stackinset{c}{0\LMpt}{c}{0\LMpt}{\SavedStyle-}{\SavedStyle\phantom{\int}}}%
		\setbox0=\hbox{$\SavedStyle\int\,$}\kern-\wd0}\int}

\usepackage{graphicx}

\numberwithin{equation}{section}

\usepackage{xcolor}
\definecolor{color-cites}{HTML}{9a2144}
\definecolor{brickred}{HTML}{e63d12}
\definecolor{royalblue}{HTML}{2054e3}
\definecolor{blavet}{HTML}{3adbcc}
\definecolor{verd}{HTML}{4a6741}

\usepackage{tikz}

\usepackage{cancel}
\usepackage{tcolorbox}

\begin{document}
	
	\title[Smoothing via Ricci flow]{Smoothing polyhedral spaces via Ricci flow}

	\author{Richard H. Bamler}
	\address{\flushleft\parbox{\linewidth}{{\bf Richard H. Bamler} \\Department of Mathematics, \\ University of California Berkeley, \\ CA 94720\\ {\href{mailto:rbamler@berkeley.edu}{rbamler@berkeley.edu}}}}

	\author{Esther Cabezas-Rivas}
	\address{\flushleft\parbox{\linewidth}{{\bf Esther Cabezas-Rivas} \\Universitat de Val\`encia\\ Departament de Matem\`atiques\\ Av. Vicent Andrés Estellés~19\\ 46100 Burjassot\\ Spain\\ {\href{mailto:esther.cabezas-rivas@uv.es}{esther.cabezas-rivas@uv.es}}}}

	\begin{abstract}
		We prove Petrunin's smoothing conjecture in all dimensions: every
		compact Euclidean polyhedral space without boundary and with
		nonnegative Alexandrov curvature is a Gromov--Hausdorff limit of
		smooth Riemannian orbifolds with geometrically nonnegative curvature.
		More strongly, the approximating metrics are positive-time slices of
		a single orbifold Ricci flow whose metric initial condition is the
		given polyhedral space. The proof rests on a new short-time existence and 
		regularization theory for Ricci flow that, under two-sided volume
		bounds and a generalized segment inequality, replaces pointwise
		lower curvature control by small scale-invariant integral control of
		the defect from a preserved curvature cone. Although it allows
		arbitrarily large pointwise violations, this theory yields an
		existence time and positive-time curvature estimates independent of
		the initial upper curvature bound. As a further
		application, it gives rigidity consequences for manifolds with small
		integral curvature defect.
	\end{abstract}
	
	\keywords{Ricci flow, polyhedral spaces, Alexandrov spaces,
		smoothing, nonnegative cosectional curvature,
		integral curvature bounds, orbifolds}
	
	\subjclass[2020]{Primary 53E20; Secondary 53C21, 53C23}

	\date{\today. The first author is supported by NSF grant DMS-2604326.
		This work has been partially supported by project PID2025-174652NB-I00, funded by MICIU/AEI
		/10.13039/501100011033 and by ERDF, EU. The second author has also been partially supported by project CIAICO/2023/035, funded by the Conselleria d’Educació, Cultura, Universitats i Ocupació. 
	}

	\maketitle

	\section{Introduction and main results}

	A central question in Alexandrov geometry is whether singular
	spaces with a \emph{synthetic} lower curvature bound admit approximations satisfying a comparable \emph{smooth} Riemannian curvature condition. 
	Kapovitch showed that topological obstructions rule out smooth approximations in general \cite{Kapo_restric}. 
	For nonnegatively curved polyhedral spaces, however, Petrunin conjectured that such smoothings are possible if one allows the smoothings to be orbifolds \cite{Petrunin03}.
	We prove Petrunin's smoothing conjecture in all dimensions, using
	a new short-time existence theory for Ricci flow under integral
	curvature bounds.

	\medskip
	
	\begin{thm}[\textbf{Smoothing Conjecture}] \label{thm:smoothing}
		Let $P$ be a compact Euclidean polyhedral space of dimension $n\geq2$
		without boundary and with nonnegative curvature in the sense of Alexandrov. 
		Then $P$ is a Gromov--Hausdorff limit of smooth Riemannian orbifolds with geometrically nonnegative curvature.
	\end{thm}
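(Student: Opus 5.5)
The plan is to realize the approximating orbifolds as positive-time slices of a Ricci flow starting from $P$. The abstract announces a short-time existence and regularization theory in which pointwise lower curvature bounds are replaced by small scale-invariant integral control of the defect from a preserved curvature cone. Concretely, we first mollify $P$ into a sequence of smooth Riemannian orbifold metrics $g_i$ that converge to $P$ in the Gromov--Hausdorff sense. We then run the flow from each $g_i$ for a uniform time and pass to the limit.

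\textbf{Step 1: local models and the orbifold structure.} The codimension-$2$ strata need separate treatment. At a point of the $(n-2)$-skeleton, $P$ is locally isometric to $\R^{n-2}\times C(S^1_\theta)$ with cone angle $\theta\le 2\pi$; this is where nonnegative Alexandrov curvature enters. Cone angles of the form $2\pi/k$ are orbifold points. A general angle $\theta<2\pi$ must instead be smoothed by a rotationally symmetric cap: replace $dr^2+(\theta/2\pi)^2r^2d\phi^2$ near $r=0$ by $dr^2+f(r)^2d\phi^2$ with $f$ concave, $f'(0)=1$ and $f'\equiv\theta/2\pi$ for $r\ge\eps$. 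This cap has nonnegative curvature pointwise. Higher-codimension strata have links that are spherical polyhedral spaces of curvature $\ge1$, and we proceed inductively in the dimension. The smoothing there is done in cone form, with orbifold singularities allowed where the link is a quotient of a round sphere. Throughout, the curvature defect is measured with respect to the preserved cone corresponding to geometrically nonnegative curvature, i.e.\ nonnegative cosectional curvature in the sense of the keywords. Mollifying then produces smooth orbifold metrics $g_i$ that are $\eps_i$-close to $P$. These metrics have uniform two-sided volume bounds $c r^n\le|B(x,r)|\le Cr^n$ for $r\le1$, since $P$ is Ahlfors regular. They satisfy a generalized segment inequality, because $P$ is an Alexandrov space and the mollification is a bi-Lipschitz perturbation. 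Finally, the scale-invariant integral of the defect $\mathrm{dist}(\Rm,\mathcal C)$ over balls $B(x,r)$ is $\le\delta$.

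\textbf{Step 2: checking the integral smallness.} This is the main obstacle. The defect is concentrated on an $\eps_i$-neighborhood of the $(n-3)$-skeleton and of the interfaces where cone regions are glued. Its pointwise size there is $\sim\eps_i^{-2}$ and the support has volume $\sim\eps_i^{3}r^{n-3}$, so the scale-invariant quantity $r^{2-n}\int_{B(x,r)}|\text{defect}|$ is $\sim\eps_i/r\cdot$ const. That is small at scales $r\gg\eps_i$, but it is not automatically small at scales $r\sim\eps_i$. For those scales the gluing must be arranged so that the local construction is itself nearly in the cone: blend by a partition of unity in normal coordinates adapted to the stratification, and use the inductive hypothesis on the links. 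I expect this to require the most care, and the abstract's phrase ``arbitrarily large pointwise violations'' suggests the theory tolerates exactly this.

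\textbf{Step 3: flow and limit.} Apply the existence theorem to each $g_i$. This gives Ricci flows $g_i(t)$, $t\in(0,T]$, with $T$ independent of $i$ (it does not depend on the upper curvature bound of $g_i$). The flows satisfy $|\Rm|\le C/t$, distance distortion estimates $|d_{g_i(t)}-d_{g_i(0)}|\le C\sqrt t$, and the property that the defect from the cone tends to $0$ as $t\to0$ or is controlled. Taking a limit via Hamilton compactness for orbifolds on $(0,T]$ yields a flow $g(t)$ whose metric initial condition is $P$. The distance estimates pass to the limit, so $g(t)\to P$ in the Gromov--Hausdorff sense as $t\to 0$. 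Finally, the preserved-cone estimates show that the defect vanishes at positive times in the limit. Together with the strong maximum principle, this shows that $g(t)$ has geometrically nonnegative curvature for every $t>0$, proving Theorem~\ref{thm:smoothing}.
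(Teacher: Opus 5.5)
Your outer framework matches the paper's: approximate $P$ by smooth orbifolds satisfying \textbf{(A1)}--\textbf{(A3)} with Morrey defect tending to zero, flow them for a uniform time, and take a positive-time limit. But the approximators, where essentially all the difficulty lies, are not actually constructed, and the mechanisms you propose would fail. At a face of codimension at least three, the link is a (smoothed) spherical polyhedral space, not a round space form. So the cone vertex cannot simply be declared an orbifold point: the link metric must be changed by an $O(1)$ amount into a round metric on some $S^m/\Gamma$. That such a $\Gamma$ exists has to come out of the argument. Near such a vertex $P$ need not even be a manifold, so there are no charts to mollify in until an orbifold structure has been produced.

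If this change, or the smoothing near the $(n-3)$-skeleton, is made at a single scale $\eps$ by mollification or a partition-of-unity blend, it fails. Cutoff derivatives, and intermediate link metrics that need not satisfy $\Rm\geq\mathscr I$, give $\ell\sim\eps^{-2}$ on a set of proportional volume. Hence $\rho^{2(1+\beta)-n}\int_{B(x,\rho)}\ell^{1+\beta}\,d\mu$ is of order one at $\rho\sim\eps$. This is exactly the failure you noticed in Step 2, and nothing in your plan removes it. ``Use the inductive hypothesis on the links'' does not help by itself. The statement concerns Euclidean spaces, and even a smooth link metric with $\Rm\geq\mathscr I$ still leaves a singular cone vertex and a transition from the polyhedral link.

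The paper supplies three missing ideas.
\begin{enumerate}
\item[(i)] It smooths faces in decreasing dimension at scales $a_k\varphi_F$, where $\varphi_F$ is comparable to $d_{\partial F}$ and homogeneous in the normal cones of lower faces. After each stage, every unfilled face then has an exact product-cone neighbourhood $\R^d\times C(L_F,\hat g_F)$, with $\hat g_F$ a smooth orbifold metric whose spherical defect $\ell^{\mathrm{sph}}$ (the defect of $\Rm-\mathscr I$) is Morrey-small. A uniform scale $\eps$ destroys this structure.
\item[(ii)] It runs Ricci flow on $\hat g_F$. This already requires the integral existence theorem applied to the links, since $\hat g_F$ has large negative curvature on small sets. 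Through a limiting link flow it shows $\Rm-b(t)\mathscr I\in\mathcal C_{\mathrm{geom}\geq0}$ with $b(t)>1$, so the B\"ohm--Wilking/Hamilton theorems make the flow round. This gives a path $h_F(s)$ from a round metric to $\hat g_F$ with small $\ell^{\mathrm{sph}}$ (Claims A and B).
\item[(iii)] It inserts this path logarithmically slowly, via $s=(r/a\varphi_F)^{a}$. At every scale the metric is then $O(a)$-close to an exact cone over a frozen link, whose defect is $r^{-2}\ell^{\mathrm{sph}}_{h_F(s_0)}$.
\end{enumerate}

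There are also three smaller issues.
\begin{itemize}
\item The theory needs an $L^{1+\beta}$ Morrey bound with $\beta>0$, not the scale-invariant $L^1$ quantity you estimate.
\item It gives no additive bound $|d_t-d_0|\le C\sqrt t$, only $d_t\geq d_0-C\sqrt t$ and the multiplicative \textbf{(B4)}. Identifying $P$ as the initial condition therefore needs local smooth convergence on the regular part combined with \textbf{(B4)} near the singular strata.
\item The paper obtains the segment inequality from a blow-up classification. It does not rely on bi-Lipschitz closeness to $P$, which you would need uniformly in $i$ and which a Ricci-flow-based link interpolation does not obviously provide.
\end{itemize}
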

	
	\medskip
	
	Here geometrically nonnegative curvature, also called nonnegative
	cosectional curvature, means that the curvature tensor at each point
	is a nonnegative linear combination of rotated copies of the curvature tensor
	of $\mathbb S^2\times\mathbb R^{n-2}$. 
	The condition implies nonnegative curvature operator and in dimensions at most four it is equivalent to it \cite[Section~2]{3dcase}; see Section~\ref{GNNCO} for the precise definition.

	In dimension two, the conjecture follows by smoothing cone points
	while preserving nonnegative Gaussian curvature. In dimension three,
	it was proved by Lebedeva, Matveev, Pe\-trunin and Shevchishin
	\cite{3dcase}. Their argument combines explicit convex hypersurface
	smoothings near edges and vertices with short-time Ricci flow and a
	three-dimensional curvature-pinching condition. 
	These explicit smoothing and pinching arguments are not available in higher dimensions. Moreover,
	normal links may themselves be singular polyhedral spaces, so the
	smoothing construction must proceed recursively in lower dimensions.
	The metrics used to start the Ricci flow can have large negative
	curvature, which can only be controlled in an integral sense.

	Our proof shows that the approximating metrics in
	Theorem~\ref{thm:smoothing} may even be chosen as time-slices of a
	single compact orbifold Ricci flow emerging from $P$. We make this
	precise using the following definition.
	
	\begin{defn}\label{def:flow-from-metric-space}
		A Ricci flow $(M,g(t))$, $t\in(0,T)$, on an orbifold $M$
		\textbf{emerges from} a metric space $(X,d_X)$ with dense
		Riemannian regular part $X^{\mathrm{reg}} \subset X$ if it has complete
		time-slices, uniformly bounded curvature on compact subintervals
		of $(0,T)$, and a smooth embedding $\iota:X^{\mathrm{reg}}\to M$
		such that $\iota^*g(t)$ converges locally smoothly to the regular
		metric of $X^{\mathrm{reg}}$ as $t\searrow0$. We also require
		pointed Gromov--Hausdorff convergence
		\[
		(M,d_{g(t)},\iota(x))\longrightarrow(X,d_X,x)
		\qquad\text{as }t\searrow0
		\]
		for every $x\in X^{\mathrm{reg}}$.
	\end{defn}
	
	We prove the following Ricci flow version of
	Theorem~\ref{thm:smoothing}.
	
	\begin{thm}\label{thm:smoothing-flow}
		Let $P$ be a compact Euclidean polyhedral space as in Theorem~\ref{thm:smoothing}. 
		Then there exist $T>0$, a compact orbifold $M$ with
		underlying space homeomorphic to $P$, and a Ricci flow $(M,g(t))$,
		$t\in(0,T)$ with geometrically nonnegative
		curvature emerging from $P$.
	\end{thm}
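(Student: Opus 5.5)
We will prove Theorem~\ref{thm:smoothing-flow} by induction on the dimension $n$. Along the way we build a family of smooth orbifold metrics $g_\delta$ on an orbifold $M$ whose underlying space is homeomorphic to $P$. These metrics converge to $P$ as $\delta\to0$. Their pointwise curvature may be very negative near the singular strata, but their defect from the cone $\mathcal C$ of geometrically nonnegative curvature operators is small in a scale-invariant integral sense. We then feed the $g_\delta$ into the short-time existence and regularization theory announced in the abstract. That theory gives Ricci flows $g_\delta(t)$ on a uniform interval $[0,T]$, with curvature bounds $|\Rm|\le C/t$ and curvature defect from $\mathcal C$ tending to zero as $\delta\to0$. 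Passing to a limit $\delta\to0$ with Hamilton compactness on $(0,T)$ then produces the flow $g(t)$. The limit has curvature in $\mathcal C$, because the defect vanishes and $\mathcal C$ is preserved by the flow. It emerges from $P$ because the approximations are uniformly close to $P$ at $t=0$ and the distance distortion estimates are uniform.

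\textbf{Local models.} Each point of $P$ has a neighborhood isometric to a cone $\mathbb R^k\times C(\Sigma)$. Here $\Sigma$ is a spherical polyhedral space of curvature $\ge1$, and it is the link of a nonnegatively curved polyhedral space of lower dimension. By induction (in the spherical/conical version), $\Sigma$ can be smoothed to orbifold metrics $h_\epsilon$ with curvature in the appropriate cone for curvature $\ge1$. One could use the Ricci flow of $C(\Sigma)$ itself, or a normalized flow on the link. We then take the metric $dr^2+f(r)^2h_\epsilon$ with $f$ slightly concave and $f(r)=r(1-\epsilon)$ near $0$ adjusted. This smooths the tip while keeping geometrically nonnegative curvature, up to an error. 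The orbifold structure comes from the cone points of the orbifold smoothings of the links. Concretely, a cone over a smooth round-quotient link gives a $\mathbb R^n/\Gamma$ orbifold chart. Since polyhedral links are themselves recursively conical, the construction is carried out stratum by stratum, from the highest codimension strata (vertices) upward. The scale at which we smooth stratum $S_j$ is chosen much smaller than the scale used for $S_{j+1}$.

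\textbf{Gluing.} We glue the local models with cutoffs in the transition annuli between consecutive scales. In these annuli the curvature violation is only pointwise of size $\sim \delta_j^{-2}$, and it is supported on a region of volume $\sim\delta_j^{n-k}\cdot(\text{stratum size})$. Hence the scale-invariant integral $\int|\Rm_-|^{p}$ over any ball is small, provided $p<\text{codim}/2$ is appropriate and the ratios of scales are extreme. We must also verify the other hypotheses of the existence theorem: two-sided volume bounds (clear, since everything is bi-Lipschitz close to conical models) and the generalized segment inequality. For the latter we would first try to inherit it from $P$, which satisfies it as an Alexandrov space via Bishop--Gromov, using uniform bi-Lipschitz closeness.

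\textbf{Main obstacle.} The main obstacle is quantifying the integral defect near the deep, nested strata. The gluing errors at different scales interact, and the smallness must hold at every scale simultaneously, uniformly in $\delta$. We will handle this by choosing the scale hierarchy $\delta_0\ll\delta_1\ll\dots$ inductively, so that each error is dominated by a geometric series. Finally, the homeomorphism between $M$ and $P$ follows from the construction: the glued orbifold is a regular neighborhood decomposition of $P$. The smooth embedding $\iota$ of $P^{\mathrm{reg}}$ follows because the metrics are unchanged away from shrinking neighborhoods of the singular set, and the local smooth convergence comes from pseudolocality.
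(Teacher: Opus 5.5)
Your outer architecture agrees with the paper's: build orbifold approximators with small scale-invariant defect, apply Theorem~\ref{Lp-RF-existence} on a common interval, take a positive-time limit, and identify the initial condition through \textup{\textbf{(B4)}} and distance contraction. The gap is in the construction of the approximators, which is where essentially all the difficulty lies.

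First, the tip filling does not produce an orbifold point. The metric $dr^2+f(r)^2h_\epsilon$ with $f(r)=r(1-\epsilon)$ near $0$ is still a cone at the tip. It is singular unless $(1-\epsilon)^2h_\epsilon$ is a round space form. To get an $\mathbb R^n/\Gamma$ chart you must know the link is a spherical space form, and you must connect the link metric actually present near the stratum to a round one by a path of metrics.

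Your ``spherical/conical'' inductive hypothesis does not supply this. The theorem concerns compact Euclidean spaces, links are spherical, and the cone over a vertex link is noncompact and $n$-dimensional. Even smoothings of $\Sigma$ with $\Rm-\mathscr I\in\mathcal C_{\mathrm{geom}\geq0}$ would still have to be interpolated from the link metric left by the earlier stages, and that metric has no pointwise control.

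The paper handles this as follows:
\begin{itemize}
\item It processes faces from codimension two downward, largest scale first.
\item It keeps exact product-cone structure along all lower faces via the homogeneous scale functions of Lemma~\ref{lem:face-scales-and-charts}. So a $k$-face $F$ sees an exact cone over an already smooth orbifold link $\hat g_F$.
\item It uses the Ricci flow of $\hat g_F$ itself as the path. Claim A shows that limit flows emerging from the polyhedral links satisfy the strict form of \eqref{eq:limiting-link-spherical-bound}.
\item A compactness argument with B\"ohm--Wilking/Hamilton then shows the link flows become round. This gives both the space-form structure and the path $h_F(s)$.
\end{itemize}
Independent local models glued by cutoffs, as you propose, would destroy this conical compatibility.

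Second, your error budget fails at the gluing scale itself. Suppose the violation has size $\delta_j^{-2}$ on a tube of radius comparable to $\delta_j$. On a ball $B$ of radius $\rho=\delta_j$ centered there,
\[
\rho^{2p-n}\int_B\ell^p\gtrsim\delta_j^{2p-n}\,\delta_j^{-2p}\,\delta_j^{n}=1 .
\]
Codimension and separation of scales only help on balls much larger than $\delta_j$, so \textup{\textbf{(A2)}} would hold only with $\varepsilon\sim1$. What is needed is $\ell=o(r^{-2})$, in an averaged sense, at every radius $r$.

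The paper obtains this from two devices. The codimension-two caps have affine models with nonnegative cosectional curvature, so $\ell\leq Ca_{n-2}^2r^{-2}$ on the caps. The higher-codimension fillings use the logarithmically slow interpolation \eqref{eq:logarithmic-link-interpolation}, $s=(r/(a_k\varphi_F))^{a_k}$, whose freezing errors are $O(a_k)$ and give \eqref{eq:filling-defect}. Claim B, proved by applying the integral theory to the link flows, shows that the spherical defect of $h_F(s)$ is Morrey-small, though pointwise unbounded. This is exactly why arbitrarily large pointwise violations occur and the integral existence theorem is needed.

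Finally, \textup{\textbf{(A1)}} and \textup{\textbf{(A3)}} cannot be inherited from $P$ by uniform bi-Lipschitz closeness, because the comparison constants for the fillings depend on the parameters. The paper obtains them uniformly from the multiscale blow-up classification in Lemma~\ref{lem:approximator-properties}. It verifies the classical segment inequality on each limiting model and transfers it through the region of smooth convergence.
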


	Our approach combines two new ideas. 
	The first is a short-time
	existence theorem for Ricci flow under integral lower curvature
	bounds, which is of independent interest
	(Theorem~\ref{Lp-RF-existence_short}). 
	It substantially strengthens our earlier work with Wilking
	\cite{BamlerCabezasRivasWilking} by allowing arbitrarily large
	negative curvature, provided the negative part of the curvature is
	small in a scale-invariant integral sense. Under suitable geometric
	assumptions, we obtain an existence time and positive-time curvature
	estimates independent of pointwise bounds on the initial curvature.
	The second idea is a recursive smoothing construction, in which
	we smooth the normal links and use their Ricci flows to fill in
	the polyhedral strata. A multiscale blow-up analysis shows that
	the resulting initial metrics satisfy the geometric and integral
	hypotheses of our existence theorem.
	
	We will need the following generalization of the segment inequality
	of Cheeger--Colding \cite[Theorem~2.11]{ChColSegment}.

	\begin{defn} \label{segment}
		A Riemannian orbifold $(M,g)$ is said to satisfy the \textbf{generalized $(\alpha, D, \Upsilon)$-segment inequality} if for every \(x,y\in M\) with
		$
		0<r:=d(x,y)\leq1,
		$
		there exist a probability space
		$
		(\Omega_{x,y},\mathcal F_{x,y},\nu_{x,y})
		$
		and a measurable map
		$
		\Phi_{x,y}\colon
		\Omega_{x,y}\times[0,1]\rightarrow M
		$
		such that, for every \(\omega\in\Omega_{x,y}\), the curve
		\[
		\gamma^\omega_{x,y}
		:=
		\Phi_{x,y}(\omega,\cdot)
		\]
		is a rectifiable constant-speed curve satisfying $L_g(\gamma^\omega_{x,y})
		\leq
		Dr,
		$ with
		\begin{equation}\label{eq:A3-endpoints}
		\gamma^\omega_{x,y}(0)\in B(x,\alpha r),
		\qquad
		\gamma^\omega_{x,y}(1)\in B(y,\alpha r),
		\end{equation}
		and
		\begin{equation}\label{eq:A3-pushforward}
		(\Phi_{x,y})_*
		\left(
		\nu_{x,y}\otimes
		\mathcal L^1|_{[0,1]}
		\right)
		\leq
		\Upsilon r^{-n}\mu_g.
		\end{equation}
		Here \(\mathcal L^1|_{[0,1]}\) denotes one-dimensional Lebesgue measure restricted to \([0,1]\), and \eqref{eq:A3-pushforward} is understood as an inequality of Borel measures on \(M\). 
	\end{defn}
	
	Here the curves need not be minimizing, and their endpoints may
	vary within small balls. This
	flexibility is essential for the polyhedral approximations, which do
	not satisfy a useful uniform lower Ricci bound. The bound
	\eqref{eq:A3-pushforward} lets us estimate averages along these curves
	by spatial integrals. Together with the volume and integral curvature
	bounds below, this allows us to control distance expansion along the
	Ricci flow.
	
	We first formulate the existence theorem for the cone of nonnegative
	curvature operators. This version isolates the analytic result from
	the polyhedral construction and may be useful in other smoothing and
	compactness problems.

	\begin{thm}\label{Lp-RF-existence_short} 
		Given $2 \leq n\in\mathbb N$, $v_0,D,\Upsilon>0$, and
		$\beta\in(0,\frac{1}{2})$, there are positive constants
		$\tau=\tau(n,v_0,D)$, $C=C(n,v_0,D)$, and
		$\alpha=\alpha(n,D), \eps_0=\eps_0(n,v_0,D,\beta,\Upsilon)\in(0,1]$
		with the following property.
		
		Let $0<\eps\leq\eps_0$ and let $(M^n,g)$ be a complete
		Riemannian orbifold with bounded curvature satisfying a 
		generalized $(\alpha,D,\Upsilon)$-segment inequality.
		Suppose that, for every
		$x\in M$ and $0<r\le1$, one has
		\[ \textup{\textbf{(A1)}} \quad v_0 r^n \leq \operatorname{vol}_g B_g(x,r) \leq v_0^{-1} r^n  \qquad \text{and} \qquad \textup{\textbf{(A2)}} \quad
		\int_{B_g(x,r)} \ell^{1+\beta} d\mu_g \leq \eps r^{n - 2 (1+\beta)}, \]	
		where $\ell$ denotes the negative part of the lowest eigenvalue of $\Rm_g$. Then the Ricci flow $g(t)$ with initial metric $g$ exists on the time interval 
		$[0,\tau]$ 
		and for every $t \in (0, \tau]$ we have the curvature bounds 
		\[ \Rm_{g(t)} > - C \cdot \frac{\sqrt{\eps}}{t}  \qquad \text{and} \qquad  \big|\Rm\big|_{g(t)} < \frac{C}{t}. \]
	\end{thm}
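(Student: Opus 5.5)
We argue by a continuity/bootstrap argument in the style of our earlier work with Wilking \cite{BamlerCabezasRivasWilking}. Since $g$ has bounded curvature, Shi's theorem gives a maximal complete bounded-curvature flow on some $[0,T_{\max})$. Fix a large $C$ and define $T^*\le\min(\tau,T_{\max})$ as the supremum of times $T'$ such that on $(0,T']$ we have the a priori bounds
\[ \Rm_{g(t)}\ge -C\sqrt{\eps}\,t^{-1},\qquad |\Rm|_{g(t)}\le C t^{-1}. \]
These hold for small $t$ because the curvature is bounded. We show that on $(0,T^*]$ both bounds in fact hold with $C/2$ in place of $C$, provided $\eps\le\eps_0$ and $\tau$ is small. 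This forces $T^*=\tau$. The bound $|\Rm|\le C/t$ then prevents finite-time blowup before $\tau$.

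The first step is distance and volume control under the a priori bounds. From $\Ric\ge -(n-1)C\sqrt\eps/t$ we only obtain the non-integrable bound $\partial_t d\le C'\sqrt\eps\,d/t$, so we need an integral substitute. Along each curve $\gamma^\omega_{x,y}$ from the generalized segment inequality, we bound the growth of its $g(t)$-length by $\int_0^t\int_\gamma \ell_{g(s)}$. The quantity $\ell_{g(s)}$ is estimated by transporting the initial integral bound (A2) forward. Concretely, we use the evolution of the defect $\ell$ from the preserved cone of nonnegative curvature operators; it is a subsolution of $\partial_t\ell\le\Delta\ell+c|\Rm|\ell$, so $\ell^{1+\beta}$ is also a subsolution up to controlled terms. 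Together with the volume bounds (A1), which we propagate using the a priori Ricci bound for short times, this gives an $L^{1+\beta}$ bound on balls at time $s$. By the pushforward inequality \eqref{eq:A3-pushforward} and Hölder, averaging over $\omega$ yields
\[ \int_0^t\!\!\int_\Omega\!\int_{\gamma^\omega}\ell_{g(s)} \lesssim \Upsilon\,\eps^{1/(1+\beta)}\,r\cdot(\text{power of } t/r^2). \]
Since $\beta<1/2$, the scaling is subcritical, so the time integral converges at $s=0$. Choosing $\alpha$ small and using the endpoint flexibility, we get $d_{g(t)}(x,y)\le(1+\delta)\,d_g(x,y)+C\sqrt t$ down to scale $\sqrt t$. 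Combined with the standard lower bound $\partial_t d\ge -C\sqrt{C/t}$ from $|\Rm|\le C/t$, this keeps balls at scale $\sqrt t$ comparable, with volume ratios controlled by $v_0$ only.

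The second step improves the curvature bounds by contradiction and blow-up. Suppose $|\Rm|(x_0,t_0)\ge C/(2t_0)$ with $C$ huge. Point-pick and parabolically rescale at $(x_0,t_0)$. Step one gives noncollapsing at scale $\sqrt{t_0}$ with a constant depending only on $(n,v_0,D)$; note that the a priori bound involves $C$, but the volume control does not. We then pass to a limit ancient-type flow that is $\kappa$-noncollapsed with curvature bounded on $(-A,0]$. Its curvature lies in the cone of nonnegative curvature operators, because the rescaled lower bound is $-C\sqrt\eps\to0$ in the limit regime, or because the integral defect scales to zero. It also has Euclidean-type volume growth inherited from (A1), since scaling is subcritical at larger scales. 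A flow with $\Rm\ge0$ and maximal volume growth at $t\to-\infty$ must be flat, which contradicts $|\Rm|=1$ at the basepoint. The same argument with $\eps\to0$ improves the lower bound: a point where $\Rm<-C\sqrt\eps/(2t)$ rescales to a point where the cone defect is comparable to $\sqrt\eps$. We then run a local maximum-principle (Moser-type) estimate for $\ell$ on the parabolic ball of scale $\sqrt t$, where $|\Rm|\le C/t$ now holds. This bounds $\sup\ell$ by the $L^{1+\beta}$ norm, hence by $\eps^{1/(1+\beta)}t^{-1}\ll\sqrt\eps\,t^{-1}$ after adjusting the exponents. The constants are independent of the choice of $C$ in the bootstrap, which closes it.

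I expect the main obstacle to be the first step: propagating the integral smallness (A2) and the volume bounds forward in time, and turning them into distance-distortion control via the curves of Definition~\ref{segment}. The difficulty is that these curves are neither minimizing nor Ricci-controlled. The first thing I would try is to estimate the heat-kernel-weighted quantity $\int\ell^{1+\beta}K(x,t;\cdot,0)$, using Gaussian heat-kernel bounds valid under the bootstrap assumptions. This is the point where the time $\tau$ must be chosen depending only on $(n,v_0,D)$, while $\eps_0$ absorbs $\beta$ and $\Upsilon$.
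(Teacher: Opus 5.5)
Your overall architecture matches the paper's: a continuity argument, a blow-up against the vanishing asymptotic volume ratio of $\kappa$-solutions for $|\Rm|\le C/t$, and segment-inequality curves for distances. But the step you flag as the main obstacle is a genuine gap, and the mechanism you propose for it fails.

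\textbf{Propagating (A2).} With $\partial_t\ell\le\Delta\ell+c|\Rm|\ell$ and $|\Rm|\le C/t$, any comparison produces factors $(t/s)^{cC}$ that blow up as $s\searrow0$, so nothing can be transported from (A2) at $t=0$. The sharp inequality $(\partial_t-\Delta)\ell\le\scal\,\ell+(n-1)(n-2)\ell^2$ does not rescue this.
\begin{itemize}
\item The cancellation of $\scal\,\ell$ against $\partial_t\,d\mu_t=-\scal\,d\mu_t$ happens only for the \emph{linear} quantity. An energy estimate for $\int\ell^{1+\beta}d\mu_t$ leaves the term $\beta\int\scal\,\ell^{1+\beta}$.
\item Bounding the quadratic term via the pointwise bound $\ell\le C\sqrt\eps/t$ still gives a divergent factor $(t/s)^{c\,C\sqrt\eps}$.
\end{itemize}
The paper avoids both problems as follows.
\begin{itemize}
\item It represents $\ell$ itself through the heat kernel $G$ of the backward heat equation without scalar term, so $\scal\,\ell$ is absorbed.
\item It uses that $\ell/t$ is a subsolution to pass from $t/8$ to $t$.
\item On $[0,t/8]$ it writes $(n-1)(n-2)\ell^2\le s^{\beta-1}\ell^{1+\beta}$. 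This Duhamel term is integrable at $s=0$ only because the Gaussian bound (B6), $\ell(\cdot,s)\le\mathscr B\,\widehat\ell_{\sqrt s}$, is itself carried in the bootstrap and combined with a reproduction formula. This yields the gain $\eps^{\beta/2}$.
\end{itemize}
Your Moser-type improvement of the lower bound inherits this gap. It needs an $L^{1+\beta}$ bound for $\ell$ at times comparable to $t$, which is exactly what is missing.

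\textbf{Distances and volumes.} Along a curve, $L_t(\gamma)\le L(\gamma)\int_0^1\exp\bigl((n-1)\int_0^t\ell(\gamma(q),\sigma)\,d\sigma\bigr)dq$. An averaged \emph{linear} bound on $\int_0^t\int_\gamma\ell$ therefore does not control length expansion; you need exponential integrability. The paper obtains this in Lemma~\ref{lem:exponential-integrability} by interpolating $\ell\le\eta/t$ with a spacetime Morrey bound (B5). That bound is taken with respect to the initial measure, so that \eqref{eq:A3-pushforward} applies.

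Volume lower bounds likewise cannot be propagated through the volume form, since $\scal\le C/t$ is not integrable in time. The paper proves (B3) at scale $\Lambda\sqrt t$, with $\Lambda\gg\sqrt{\mathscr K}$, by packing, Alexandrov compactness and volume convergence, and then descends to scale $\sqrt t$.

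\textbf{Structure of the bootstrap.} These estimates depend on each other in a cycle:
\begin{itemize}
\item Gaussian heat-kernel bounds need noncollapsing;
\item noncollapsing needs upper distance bounds;
\item upper distance bounds need the spacetime defect bound;
\item the spacetime defect bound comes from the Gaussian defect bound.
\end{itemize}
So a bootstrap on the two curvature bounds alone cannot be closed in the sequential order you propose. The paper bootstraps all six quantities (B1)--(B6) simultaneously.
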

	
	The key feature of Theorem~\ref{Lp-RF-existence_short} is that its existence time is
	independent of an upper bound for the initial curvature and a lower curvature bound is only required in an integral sense.
	Moreover, the theorem converts a mere \emph{integral} lower curvature bound into a pointwise one, where the bound improves according to the integral bound.
	The theorem is therefore applicable to families whose pointwise curvature
	becomes unbounded on small regions, as happens in the polyhedral approximation.


	Theorem~\ref{Lp-RF-existence_short} also holds for geometrically nonnegative curvature, with $\ell$ replaced by the defect
	\[
	\ell_{\mathrm{geom}}(x)
	=\inf\{b\geq0:\Rm_g(x)+b\mathscr I_g(x)
	\in\mathcal C_{\mathrm{geom}\geq0}\},
	\]
	where $\mathscr I_g$ is the curvature operator of constant sectional curvature $1$ and the lower bound on the curvature operator is replaced with a bound of the form $\ell_{g(t)} < C \sqrt{\eps} t^{-1}$.
	See Section~\ref{GNNCO} for a discussion of the cone
	$\mathcal C_{\mathrm{geom}\geq0}$ and its preservation under Hamilton's
	ODE, and Theorem~\ref{Lp-RF-existence} for a detailed short-time
	existence statement.
	
	The existence theory also gives an integral version of the gap result
	in~\cite[Corollary~3]{BamlerCabezasRivasWilking}.
	\begin{cor}\label{cor:integral-curvature-gap}
		Fix $2 \leq n\in\mathbb N$, $v_0,D,\Upsilon,d_0>0$ and
		$\beta\in(0,\frac{1}{2})$, and let $\alpha$ be as in
		Theorem~\ref{Lp-RF-existence_short}. There is
		$\eps_{\mathrm{gap}}>0$ such that every closed connected Riemannian
		manifold $(M^n,g)$ satisfying the generalized
		$(\alpha,D,\Upsilon)$-segment inequality,
		$\operatorname{diam}_gM\leq d_0$, and \textup{\textbf{(A1)--(A2)}} with
		$\eps\leq\eps_{\mathrm{gap}}$ admits a smooth metric with nonnegative
		curvature operator. The analogous statement holds for geometrically
		nonnegative curvature if $\ell$ in \textup{\textbf{(A2)}} is replaced by
		$\ell_{\mathrm{geom}}$.
	\end{cor}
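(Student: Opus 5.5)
We argue by contradiction and compactness, combining Theorem~\ref{Lp-RF-existence_short} with the rigidity results for nonnegative curvature operator under Ricci flow. Suppose there is a sequence of closed connected $(M_i^n,g_i)$ satisfying the generalized $(\alpha,D,\Upsilon)$-segment inequality, $\operatorname{diam}_{g_i}M_i\le d_0$, and \textbf{(A1)--(A2)} with $\eps_i\to0$, none of which admits a metric with $\Rm\ge0$.

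\emph{Step 1: run the flow.} Once $\eps_i\le\eps_0$, Theorem~\ref{Lp-RF-existence_short} gives Ricci flows $g_i(t)$ on $[0,\tau]$ with $\tau=\tau(n,v_0,D)$ and, for $t\in(0,\tau]$,
\[
\Rm_{g_i(t)}>-C\sqrt{\eps_i}\,t^{-1}, \qquad |\Rm|_{g_i(t)}<Ct^{-1}.
\]
Fix $t_0=\tau/2$. On $[t_0,\tau]$ the curvature is uniformly bounded by $2C/\tau$, and Shi's estimates bound all derivatives.

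\emph{Step 2: uniform geometry at positive times.} We need diameter and injectivity-radius control of $g_i(t)$ on $[t_0,\tau]$. Since $\Rm_{g_i(t)}\ge -C\sqrt{\eps_i}/t$, we have $\Ric\ge -(n-1)C\sqrt{\eps_i}/t$, so $g_i(t)\le (t/s)^{C'\sqrt{\eps_i}}g_i(s)$ for $0<s<t$. This gives a distance-expansion bound, uniformly as $s\to0$ once we also use the lower bound on distances that the paper's proof of Theorem~\ref{Lp-RF-existence_short} provides; that is presumably where the segment inequality enters, giving $d_{g_i(t)}\le d_{g_i}+C\sqrt t$. Hence $\operatorname{diam}_{g_i(t_0)}M_i\le d_0+C$. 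For noncollapsing, the volume lower bound \textbf{(A1)} at scale comparable to $\sqrt{t_0}$ should propagate to $t_0$ via the curvature bound and the distance-distortion estimates; this is the standard part of such existence theorems, and I expect it to be available from the detailed Theorem~\ref{Lp-RF-existence}. Cheeger's lemma then gives a uniform injectivity-radius lower bound.

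\emph{Step 3: pass to a limit.} By Hamilton's compactness theorem, a subsequence of $(M_i,g_i(t))_{t\in[t_0,\tau]}$ converges smoothly to a Ricci flow $(M_\infty,g_\infty(t))$ on a closed manifold, with $M_i$ diffeomorphic to $M_\infty$ for large $i$. The lower bound $-C\sqrt{\eps_i}/t\to0$ shows that $g_\infty(t_0)$ has nonnegative curvature operator.

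\emph{Step 4: the contradiction, and the main obstacle.} To obtain a nonnegatively curved metric on $M_i$ itself, rather than only on a diffeomorphic manifold, use that $M_i\cong M_\infty$ for large $i$. Pulling back $g_\infty(t_0)$ by the diffeomorphism gives a smooth metric with $\Rm\ge0$ on $M_i$, contradicting the choice of the sequence. The main obstacle is Step~2, namely uniform diameter and noncollapsing control at time $t_0$ independent of $i$. I expect it to follow from the distance-distortion and volume estimates established inside the proof of Theorem~\ref{Lp-RF-existence}.

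For the geometrically nonnegative case the argument is the same. Replace $\ell$ by $\ell_{\mathrm{geom}}$ and use the corresponding version of the theorem, which gives $\ell_{\mathrm{geom},g(t)}<C\sqrt{\eps}\,t^{-1}$. Closedness of the cone $\mathcal C_{\mathrm{geom}\ge0}$ then passes the condition to the limit metric.
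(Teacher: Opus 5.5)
Your proposal follows the paper's own proof. You argue by contradiction with $\eps_i\to0$, run the flows to a fixed positive time, and collect uniform curvature, noncollapsing, injectivity-radius and diameter bounds. You then pass to a smooth limit on a closed manifold, whose curvature lies in the cone because the defect bound $C\sqrt{\eps_i}/t_0$ tends to zero, and pull back by the convergence diffeomorphisms. No rigidity theorem is needed here, only compactness.

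The one thing to correct is your justification of Step~2, which does not work as written:
\begin{itemize}
\item The comparison $g_i(t)\le (t/s)^{C'\sqrt{\eps_i}}g_i(s)$ degenerates as $s\searrow0$.
\item A lower bound on $d_t$ cannot give an upper bound on the diameter.
\item No additive estimate $d_{g_i(t)}\le d_{g_i}+C\sqrt t$ is established anywhere.
\end{itemize}

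What Theorem~\ref{Lp-RF-existence} actually provides, with $\tau=\tau_0\le1$, is \textbf{(B4)}: $d_t(x,y)\le 2D$ whenever $d(x,y)\le 1$ and $0\le t\le\tau_0$. Subdivide a $g_i$-minimizing geodesic between any two points into at most $d_0+1$ segments of length at most one, and apply the triangle inequality. This gives $\operatorname{diam}_{g_i(t)}M_i\le 2D(d_0+1)$, which is exactly how the paper obtains the diameter bound.

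Similarly, the noncollapsing you need at $t_0$ is precisely \textbf{(B3)}, which is also part of the conclusion of that theorem. It is proved by a packing argument that itself relies on \textbf{(B4)}, so you should cite it rather than try to derive it from the curvature bound. Together with \textbf{(B1)}, it gives the injectivity-radius bound via Cheeger's lemma, and the rest of your argument goes through unchanged.
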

	

	\medskip
	
	The use of Ricci flow to smooth singular metrics is motivated by its
	ability to preserve and improve curvature conditions, as established
	in the work of Hamilton, B\"ohm--Wilking, Brendle--Schoen and Nguyen
	\cite{BW2,BrendleSchoen,HamPositiveRicci,HamSurfaces,Nguyen}.
	Ricci flow and Ricci--DeTurck flow have been
	constructed from continuous or rough metrics
	\cite{BamlerGromov,BurkhardtGuim,KochLamm,LammSimon,LeeLiu,SimonC0},
	and from incomplete surfaces and two-dimensional singular spaces
	\cite{GiesenTopping,Richard,YinConical}. Existence and curvature
	estimates independent of an initial upper curvature bound, together
	with preservation or almost-preservation of pointwise curvature
	conditions, were established in
	\cite{BamlerCabezasRivasWilking,CabezasRivasWilking,Lai,
		LeeTam,LeeTopping,SimonToppingLocal}. Such estimates also underlie
	constructions of flows from noncollapsed metric limits under lower
	curvature assumptions
	\cite{McLeodToppingPyramid,McLeodToppingGlobal,SimonNoncollapsed,
		SimonToppingMollification}. Regularity and stability of flows emerging
	from metric spaces were studied in
	\cite{DeruelleSchulzeSimon,DeruelleSchulzeSimonStability}.
	Flows from metric cones and spaces with isolated or edge-type conical
	singularities were constructed in
	\cite{BamlerChen,Deruelle,GianniotisSchulze,Lavoyer,SchulzeSimon}.
	
	Our starting point is the short-time existence theorem of Bamler,
	Cabezas-Rivas and Wilking \cite{BamlerCabezasRivasWilking}.
	Its pointwise almost-nonnegativity hypothesis, however, is not
	satisfied uniformly by the polyhedral approximations constructed here.
	Normal links may
	themselves be singular, so even constructing the initial approximations
	requires lower-dimensional Ricci flows. The successive smoothings
	introduce arbitrarily large negative curvature on smaller and smaller
	scales. This necessitates a new existence theory based on
	integral control of the negative curvature. We must control its
	nonlinear growth as well as distance expansion and collapse, without
	a uniform pointwise lower Ricci bound. The proof couples heat-kernel
	estimates with a new exponential integrability theorem and the
	generalized segment inequality.
	This theory makes the recursive construction possible and allows the
	resulting approximations to be evolved for a uniform time.
	\medskip
	
	We briefly describe the proof of
	Theorem~\ref{Lp-RF-existence_short}. Let $\ell$ denote the negative
	part of the lowest eigenvalue of the curvature operator, or the
	analogous defect for geometrically nonnegative curvature. This
	quantity satisfies the evolution bound
	\[
	(\partial_t-\Delta)\ell
	\le
	\scal\,\ell+C\ell^2
	\]
	in the barrier sense. A key observation, already central to
	\cite{BamlerCabezasRivasWilking}, is that the possible growth from
	$\scal\,\ell$ is exactly offset by the volume change
	$\partial_t d\mu_{g(t)}=-\scal\,d\mu_{g(t)}$ when differentiating integrals
	of $\ell$. Thus the potentially large positive scalar curvature
	drops out of this part of the estimate.
	
	The quadratic term $C\ell^2$ remains, and controlling it requires a
	new argument when $\ell$ is initially small only in an integral sense.
	We compare $\ell$ with Gaussian $L^{1+\beta}$ averages of the initial
	data in a bootstrap argument. The initial Morrey smallness then makes
	the contribution of $C\ell^2$ to the heat-kernel integral small enough
	to improve this comparison and obtain a pointwise lower curvature
	bound at positive times. A
	point-picking and blow-up argument gives the complementary curvature
	estimate
	$
	|\Rm_{g(t)}|\le \tfrac Ct.
	$
	These two estimates provide the curvature component of the
	bootstrap.
	
	The absence of a uniform lower Ricci curvature bound is a major
	obstacle to controlling the geometry along the flow. Even basic
	estimates for the volumes of balls and distances require new
	arguments. For the distance estimates, the main difficulty is to
	control expansion. We address this using the exponential integrability
	estimate of Section~\ref{sec:4}, which provides averaged control of
	length growth. Applying this estimate requires propagating the initial
	integral control of $\ell$ to a spacetime Morrey bound.
	Combined with the generalized segment inequality, this produces a
	curve with small length expansion and, after short
	endpoint corrections, an upper bound for the evolving distance.
	
	The preceding estimates are combined in a bootstrap argument based on
	a priori assumptions controlling the basic geometry of the evolving
	metric: upper curvature bounds, noncollapsing and control of distance
	expansion, together with pointwise and spacetime integral bounds for
	$\ell$. These assumptions are propagated by strictly improving their
	bounds in Propositions~\ref{prop:upper-curvature-improvement}--\ref{improved_Gaussian}.
	Distance control preserves noncollapsing at the parabolic
	scale; noncollapsing and curvature control yield the required
	heat-kernel bounds; the Gaussian estimates improve the pointwise bound
	for $\ell$; and the spacetime Morrey estimate feeds back into the
	exponential integrability argument. The auxiliary constants can be
	ordered so that the requirements on each depend only on previously
	fixed constants. All improvement requirements can therefore be
	satisfied simultaneously, and the bootstrap yields a uniform existence
	time independent of the initial upper curvature bound. Moreover, if
	the constants in the initial Morrey bounds for $\ell$ tend to zero
	along a sequence, then, at
	every fixed positive time, the corresponding curvature tensors
	approach the preserved cone uniformly. This integral recovery
	principle is the central analytic advance of the paper and the main
	mechanism behind the proof of
	Theorem~\ref{thm:smoothing}.
	
	It remains to produce geometric approximations satisfying the
	hypotheses of this theorem. We construct smooth orbifold metrics
	converging to the polyhedral space by resolving its singular strata
	inductively through their normal links. The convergence theorems of
	Hamilton and B\"ohm--Wilking \cite{BW2,HamSurfaces} provide the round
	limits of the normalized link flows needed for the fillings.
	The successive filling scales
	are chosen sufficiently far apart, and a multiscale compactness
	argument gives uniform volume and segment bounds and establishes the
	Morrey bound \textup{\textbf{(A2)}} with $\eps\to0$ along the approximating
	sequence. Applying the analytic theorem
	on a common time interval and passing to a positive-time limit
	produces a flow with geometrically nonnegative curvature; the distance
	estimates finally identify the original polyhedral space as its metric initial
	condition.
	\medskip
	
	Beyond resolving the Smoothing Conjecture, the construction developed here opens seve\-ral directions for further research. Our smoothing flow
	depends a priori on the auxiliary choices involved
	in resolving the singular strata. A first question is whether it is
	nevertheless canonical, up to time-preserving isometry, and whether
	it depends continuously on the initial polyhedral metric in the
	Gromov--Hausdorff topology. Both properties are known for compact
	Alexandrov surfaces by work of Richard \cite{Richard}. In view of the
	regularity and stability theory developed by
	Deruelle--Schulze--Simon, it is also natural to ask whether the
	positive-time flow can be characterized intrinsically among flows
	with the same metric initial condition
	\cite{DeruelleSchulzeSimon,DeruelleSchulzeSimonStability}.
	
	Such conclusions cannot be expected from curvature decay and
	Gromov--Hausdorff convergence alone. Topping exhibited sequences of
	Ricci flows with uniform $C/t$ curvature decay for which the limiting
	flow does not retain the expected initial metric
	\cite{ToppingLoss}. Thus the distance estimates obtained from the
	generalized segment inequality are not merely a technical
	ingredient. They supply the additional control needed to identify
	the metric initial condition.

	The integral existence theory also raises analytic questions of independent interest:  whe\-ther the  exponent $1+\beta$ in
	Theorem~\ref{Lp-RF-existence_short}  can be replaced by the critical
	exponent $1$, possibly with an additional smallness
	condition across scales, and whether the generalized segment inequality follows from simpler intrinsic geometric or metric-measure hypotheses.
	Progress on either question could
	extend the method beyond the polyhedral category and contribute to a general existence, uniqueness
	and stability theory for Ricci flows emerging from singular spaces.
	\medskip

	The paper is organized as follows. Section~\ref{sec:2} recalls the polyhedral
	geometry and heat kernel estimates used later. Section~\ref{GNNCO} studies geometrically nonnegative
	curvature, proves its invariance under Hamilton's ODE and derives the
	evolution inequality for its defect. Section~\ref{sec:4} establishes the
	exponential integrability estimate underlying the distance control. 
	Section~\ref{sec:integral-almost-nonnegative-curvature} proves the short-time existence theorem, its
	curvature-operator variant and its extension to effective orbifolds.
	Section~\ref{sec:approximators} constructs the polyhedral approximations, establishes the
	multiscale compactness and integral-defect estimates, and completes
	the proof of Theorem~\ref{thm:smoothing}.
	
	\medskip
	\noindent\textbf{Statement on the use of AI.}
A draft from 2017 already contained a complete, though unpolished,
proof of Theorem~\ref{Lp-RF-existence_short} and a clear outline
of the construction in Section~\ref{sec:approximators}.
The remaining work in the construction was largely a matter of
writing out its rather tedious technical details.
In preparing the present version, AI assistance was used to identify
possible improvements to the exposition, particularly in the
technical description of this construction, but not to develop
new mathematical ideas. All revisions were independently verified
by the authors, who take full responsibility for the manuscript.

	\section{Background material} \label{sec:2}
	
	We first recall the local structure of polyhedral spaces. Their links
	describe the geometry transverse to the faces and will enter the
	smoothing construction. We then recall the heat kernel estimate used
	in the Ricci flow argument.
	
	\subsection{Polyhedral spaces}
	
	\begin{defn} Let $(X, d)$ be a metric space.
		\begin{itemize}
			\item[(a)] $(X, d)$ is called a {\it length space} if the distance between any two points
			coincides with the infimum of lengths of curves joining these points.
			\item[(b)] A complete length space is called a $\kappa$-{\it polyhedral space} if it admits a locally finite triangulation such that each $m$-simplex is (globally) isometric to a simplex in the simply connected $m$-manifold of constant curvature $\kappa$. For $\kappa =-1, 0$ or  $1$ we talk about a hyperbolic, Euclidean or spherical polyhedral space, respectively.
			\item[(c)] If furthermore the space is a topological manifold, we refer to it as a {\it polyhedral manifold}.
		\end{itemize}
		
	\end{defn}
	
	Unless otherwise stated, a polyhedral space will mean a Euclidean
	polyhedral space. We next describe its singular strata and their links.
	\begin{defn} Let $(X, d)$ be a polyhedral space.
		\begin{itemize}
			\item[(a)] The {\it dimension} of $X$ is the maximal dimension of a simplex in any triangulation. A triangulation is called {\it pure} if every simplex is a face of a simplex of maximal dimension.
			\item[(b)] A point $x$ in an $n$-dimensional polyhedral space is called a {\it regular point} if it has a neighborhood isometric to an open subset of $\R^n$. Otherwise, it is called a {\it singular point}.
			\item[(c)] The singular locus of a polyhedral metric is naturally stratified. A singular point is said to have {\it codimension
				$k$} if its tangent cone is isometric to a direct product $\R^{n-k} \times C$, where $C$ is a $k$-dimensional
			polyhedral cone, yet there is no such product for $\R^{n-k+1}$ and a $(k-1)$-dimensional
			polyhedral cone.
			\item[(d)] Let $\Delta$ be a simplex and $p$ a point in its relative
			interior. The {\it link} $\operatorname{Link}\Delta$ is obtained by
			taking the unit tangent vectors at $p$ perpendicular to $\Delta$ in
			each incident simplex and gluing these spherical normal faces using
			the face identifications near $p$.
			\item[(e)] For a pure $n$-dimensional triangulation, the {\it boundary}
			is the subcomplex formed by the $(n-1)$-simplices incident to exactly
			one $n$-simplex, together with all their faces.
			
		\end{itemize}
	\end{defn}
	
	Links carry the spherical angle metrics of their normal faces and
	are independent, up to isometry, of the chosen interior point.
	If $\Delta$ is a $k$-simplex with nonempty link and $p$ lies in its
	relative interior, then
	\[
	T_pX\cong\mathbb R^k\times C(\operatorname{Link}\Delta).
	\]
	Thus the link describes the normal cone along the simplex.
	
	A Euclidean polyhedral space of dimension $n\geq2$ has nonnegative
	Alexandrov curvature if and only if its triangulation is pure, each
	$(n-1)$-simplex belongs to one or two $n$-simplices, the link of every
	simplex of codimension at least two is connected, and each
	codimension-two link is a circle of length at most $2\pi$ or a closed
	interval of length at most $\pi$; see~\cite[Theorem~4.2.14]{AlexanderKapovitchPetrunin}.
	In this case, the boundary defined above coincides with the boundary
	in the sense of Alexandrov spaces. Our main objects will be compact
	spaces without boundary, so only the circle case occurs. Under the
	stated combinatorial conditions, the bound $2\pi$ on the cone angles
	is therefore both necessary and sufficient for nonnegative
	Alexandrov curvature.
	
	There is a topological restriction on smoothing by manifolds. If a
	polyhedral space is a noncollapsed limit of Riemannian manifolds with
	a uniform lower sectional curvature bound, then the links of its
	simplices of positive codimension must be homeomorphic to
	spheres~\cite[Theorem~1.3 and Corollary~1.4]{Kap}.
	This restriction concerns manifold approximations; the smoothing
	statement in this paper allows orbifolds.
	
	The conical local models also characterize polyhedral metrics without
	reference to a triangulation. We recall the following result of
	Lebedeva--Petrunin~\cite[Theorem~1.1]{LP}.
	\begin{thm}
		A compact length space $(X,d)$ is a Euclidean polyhedral space if
		and only if every point $p\in X$ has a neighborhood admitting an
		open isometric embedding into a Euclidean cone that maps $p$ to
		the vertex.
	\end{thm}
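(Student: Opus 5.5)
We would prove the two implications separately, treating the "if" direction as the substantial one. For the \emph{only if} direction, fix a triangulation of $X$ and a point $p$. After a barycentric-type subdivision we may assume that $p$ is a vertex; this does not change the metric. The open star of $p$ is then a union of Euclidean simplices sharing the vertex $p$. Radial projection from $p$ identifies it isometrically, for distances up to half the distance from $p$ to the opposite faces, with an open subset of the Euclidean cone $C(\operatorname{Link}p)$ over the spherical polyhedral link. This embedding is open and sends $p$ to the vertex. Since the triangulation is locally finite, the radius is positive.

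For the \emph{if} direction we would prove a stronger statement by induction on a suitable complexity, simultaneously for the Euclidean case and for the spherical case (cones replaced by spherical suspensions/cones over links). We first show that if $U\ni p$ embeds openly and isometrically into a cone $C(\Sigma)$ with $p\mapsto o$, then $\Sigma$ is itself a compact length space with the analogous local property with spherical cones. The reason is that a point $q\neq p$ of $U$ on the ray through $\xi\in\Sigma$ also has a conical neighborhood. Its tangent cone splits off the radial line, $T_q\cong \mathbb R\times C(\Sigma_\xi)$, and this yields a spherical conical neighborhood of $\xi$ in $\Sigma$. The Hausdorff dimension of $\Sigma$ is one less than that of $U$. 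Dimension is finite because $X$ is compact: finitely many conical charts cover $X$, and each chart has finite dimension by induction on the depth of the recursion $p\to\xi\to\cdots$. This depth terminates because the splitting of tangent cones increases strictly at each step and the spaces involved are compact. The base case consists of $0$-dimensional spherical spaces (finite sets of points at distance $\geq\pi$) and $1$-dimensional ones (circles and intervals). By the induction hypothesis, each link $\Sigma$ is spherical polyhedral, so each chart $U$ is an open subset of a Euclidean polyhedral cone.

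The final step, which we expect to be the main obstacle, is to globalize. We cover $X$ by finitely many such charts $U_1,\dots,U_N$, with cone points $p_i$, shrunk so that their closures still lie in the charts. In each chart we have a polyhedral (conical) cell structure. We intend to build a global triangulation by taking a common refinement of these structures. On each overlap $U_i\cap U_j$ the two cell structures are both locally polyhedral, and the chart maps differ by isometries of Euclidean-locally-polyhedral pieces. We therefore expect their intersection to be a locally finite family of convex polyhedral cells, obtained by intersecting straight simplices inside the Euclidean pieces. The delicate points are the following. First, overlaps meet singular strata, so one must check that within a single chart the cells of the other chart are geodesically convex polyhedra; we would prove this by showing that geodesics in a cone avoid the vertex except at their endpoints. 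Second, the refinement must be finite; this follows from compactness together with finiteness of the cell structures on the compact closures. After refinement, every cell is a convex Euclidean polytope isometrically embedded in $X$. Subdividing these polytopes into simplices then gives the required triangulation with simplices globally isometric to Euclidean ones. One still has to check that the length metric of the glued complex agrees with $d$. This holds because both metrics are length metrics that agree locally.
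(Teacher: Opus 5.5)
The paper gives no proof of this statement; it quotes it from Lebedeva--Petrunin, so your sketch has to stand on its own. Your ``only if'' direction is fine, but the ``if'' direction has a genuine gap at its core.

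Everything rests on two claims: that for $q\neq p$ on the ray through $\xi$ the tangent cone splits as $T_q\cong\mathbb R\times C(\Sigma_\xi)$, and that this gives an \emph{exact} spherical conic neighborhood of $\xi$ in $\Sigma$. You state these as a reason, but without curvature bounds there is no splitting theorem. The fact that the radial ray gives a line through the tip of $T_q$ implies nothing by itself. For example, take the cone over a tripod of three arcs of length $\pi/2$ with endpoints $a,b,e$. It contains the line through the directions $a,b$, yet it does not split off that line: such a splitting would force $d(\eta,a)+d(\eta,b)=\pi$ for all $\eta$, which fails at $\eta=e$.

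The splitting has to come from the coexistence of two conic structures near $q$. One is given by the dilations of $C(\Sigma)$ about $o$, the other by the local dilations about $q$ from the conic neighborhood of $q$. Composing them gives local isometries moving $q$ along the ray. From these you must build a local product structure, and then express $d_o$ in the product model to see that $\{d_o=\rho\}$ near $q$ is a spherical cone over $\Sigma_\xi$. This lemma is the heart of the theorem, and your proposal only asserts it. Two smaller corrections: $\Sigma$ need not be a length space (e.g.\ $\Sigma=S^0$ at an interior point of an edge of a metric graph), and one-dimensional spherical pieces are finite metric graphs, not only circles and intervals. The induction should therefore be set up for $\pi$-length spaces.

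Two further steps do not work as written. First, the dimension bound is circular. Saying ``the depth terminates because the splitting increases strictly and the spaces are compact'' presupposes that the recursion is finite. Compactness of a single link only shows that each individual tangent cone has finite Euclidean rank. Nothing in your argument rules out nested conic charts, with radii shrinking to zero, along which the rank grows without bound. A uniform bound needs a real argument. One option is lower semicontinuity of the rank, which again uses the splitting lemma, combined with a Baire-category argument on the closed set of points near which the rank is unbounded.

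Second, the globalization contains false claims. Geodesics in a cone need not avoid the vertex except at their endpoints: already in $\mathbb R^n=C(S^{n-1})$ the segment between antipodal directions passes through it. Also, cells cut off by the metric balls $U_i$ have spherical boundary pieces. So the common refinement does not consist of convex polytopes unless you truncate the conical cells polyhedrally and organize the overlaps accordingly.
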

	
	\subsection{Heat kernel estimates for Ricci flows} \label{sec_hk}
	We now fix the heat kernel convention used below.
	Let $(M^n, g(t))$, $t \in [0, T]$, be a complete Ricci flow.
	Hereafter we denote by $G(x, t; y, s)$, with $x, y \in M$, $0 \leq s < t \leq T$, the heat kernel
	corresponding to the backwards heat equation coupled with the Ricci flow.
	This means that for any fixed $(x, t) \in M \times [0, T]$ we have
	\begin{equation} \label{heat_ker_def}
	\big(\partial_s + \Delta_{y,s}\big) G(x,t; \,\cdot\,, \,\cdot\,) = 0 \qquad \text{and }
	\qquad \lim_{s \nearrow t}  G(x, t; \, \cdot, s) = \delta_{x}.
	\end{equation}
	Unlike the more commonly used conjugate heat equation,
	\eqref{heat_ker_def} contains no scalar curvature term.
	Then for any fixed $(y, s) \in M \times [0, T]$ one can compute that $G(\,\cdot\,, \,\cdot\,;
	y, s)$ is the heat kernel associated to the conjugate forward equation
	\begin{equation} \label{G_conj}
	\big(\partial_t- \Delta_{x, t} - \scal_{g(t)} \big)G(\,\cdot\,, \,\cdot\,; y, s) = 0 \quad \text{and }
	\quad \lim_{t \searrow s} G(\, \cdot \,,t, y, s) = \delta_{y}.
	\end{equation}
	Hereafter $d_t$ and $d\mu_t$ will denote the Riemannian distance and the volume element, respectively, for the metric $g(t)$.

	We will use the following Gaussian upper bound. Its constants depend
	only on the curvature decay and noncollapsing bounds, so it applies
	uniformly to the approximating flows.
	
	\begin{prop} \label{thm:heat}\cite[Proposition 3.1]{BamlerCabezasRivasWilking}
		For any $A > 0$, there is a constant $C = C(n, A) < \infty$ such that the following holds: Let $(M^n, g(t))$, $t \in [0, T]$, be a complete Ricci flow  satisfying 
		\begin{equation} \label{RF curv assum A/t}
		|{\rm Rm}_{g(t)}| < \frac{A}{t} \quad  \text{and} \quad \vol_{g(t)}\big(B_{g(t)}(x,\sqrt{t})\big) > \frac{t^{n/2}}{A}
		\end{equation}
		for all $(x,t) \in M \times (0, T]$. Then
		\[G(x,t;y,s) < \frac{C }{(t-s)^{n/2} } \exp \bigg({ - \frac{d^2_s(x,y)}{C (t -s)} }\bigg)
		\qquad \text{for all} \qquad  0 \leq 2s \leq t \leq T.\]
	\end{prop}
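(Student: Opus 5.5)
This Gaussian upper bound is cited from \cite[Proposition~3.1]{BamlerCabezasRivasWilking}. The plan below follows the standard route for Ricci flows with $A/t$ curvature decay and noncollapsing. It has three parts: an on-diagonal bound, a Gaussian off-diagonal bound on the time-$s$ slice, and a final comparison of distances across times.

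\textbf{On-diagonal bound.} We normalize by parabolic rescaling so that $t=1$ and $s\le 1/2$. On $[1/2,1]$ the curvature is then bounded by $2A$, and by \eqref{RF curv assum A/t} the unit balls at those times are uniformly noncollapsed.

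First we bound $G(x,1;\cdot,s)$ for $s\in[1/2,1)$, where the geometry is bounded. Since $G(x,1;\cdot,\cdot)$ solves the backward heat equation \eqref{heat_ker_def}, and since curvature bounds plus noncollapsing give local Sobolev and Harnack inequalities on balls of radius comparable to $1$, local Moser iteration applies. The total mass $\int G(x,1;y,s)\,d\mu_s(y)$ is controlled, because its $s$-derivative is $\int G\,\scal$ and $|\scal|\le C(A)$. Combining these, $G(x,1;\cdot,1/2)\le C$.

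To reach times $s<1/2$, we use the reproduction formula together with the conjugate equation \eqref{G_conj} in the $(x,t)$ variables. Scalar curvature is integrable in time only logarithmically, $\int_s^t |\scal|\,dt'\lesssim A\log(t/s)$, which is not uniformly bounded. Instead, we use the restriction $2s\le t$ and iterate on dyadic scales. At each scale $t_k=2^{-k}t$ the rescaled flow has bounded geometry on $[t_k/2,t_k]$. This gives the sup bound $G\le C(t-s)^{-n/2}$, uniform in $s$, via a semigroup/ultracontractivity argument: an $L^1\to L^\infty$ bound on the last half-interval, mass control, and the fact that $t-s\sim t$.

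\textbf{Gaussian factor.} We run Davies' perturbation method on the time-$s$ slice. Fix $y$ and set $u(z,t')=G(z,t';y,s)$. Consider the weighted integral $E(t')=\int u^2e^{\psi}\,d\mu_{t'}$ with $\psi=\lambda d_s(\cdot,y)-\lambda^2 C(t'-s)$. Under the $A/t'$ curvature bound, the distance distortion estimate (Hamilton--Perelman type, $\partial_{t'} d_{t'}\ge -C\sqrt{A/t'}$) shows that $d_{t'}$ and $d_s$ differ by at most $C\sqrt{t'}$. So $|\nabla\psi|_{t'}\le C\lambda$, and $\psi$ is a valid Davies weight. The scalar curvature term contributes at most $A/t'$ to the growth of $E$, which is integrable against the factor $t'-s$ when $t'\ge 2s$. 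Combining the resulting weighted $L^2$ bound with the on-diagonal bound through the semigroup property gives
\[
G\le C(t-s)^{-n/2}\exp\bigl(\lambda d_s-C\lambda^2(t-s)\bigr).
\]
Optimizing in $\lambda$ yields the stated Gaussian.

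\textbf{Main obstacle.} The crux is that the exponent uses $d_s$, at the earlier time, while curvature control degenerates as $s\to0$. At time $s$ the curvature is only bounded by $A/s$, so time-$s$ geometry is not uniformly controlled. The hypothesis $t\ge 2s$ is what saves the argument: distances at all times $t'\in[s,t]$ are comparable to $d_s$ up to an additive error of $C\sqrt{t}$. This error is absorbed into the Gaussian after adjusting $C$, since $d_s\lesssim\sqrt{t-s}$ is the only regime where it matters. I expect this distance comparison to be the delicate step.
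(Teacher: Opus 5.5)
The paper does not prove this statement; it imports it from Bamler--Cabezas-Rivas--Wilking. Judged on its own, your on-diagonal step is essentially fine, but your Gaussian step does not work.

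\textbf{On-diagonal step.} State the mechanism precisely. For fixed $(y,s)$, the function $u=G(\cdot,t';y,s)$ solves $(\partial_{t'}-\Delta-\scal)u=0$, while $\partial_{t'}d\mu_{t'}=-\scal\,d\mu_{t'}$. Hence $\int_M G(z,t';y,s)\,d\mu_{t'}(z)=1$ exactly, and for $2s\le t$,
$G(x,t;y,s)=\int_M G(x,t;z,t/2)\,G(z,t/2;y,s)\,d\mu_{t/2}(z)\le\sup_z G(x,t;z,t/2)\le Ct^{-n/2}$.
Equivalently, $(\partial_s+\Delta)G=0$ has no potential term, so the maximum principle applies. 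No dyadic iteration is needed. The mass $\int G(x,t;y,s)\,d\mu_s(y)$ that you discuss is the wrong quantity, and it can genuinely grow like $(t/s)^{cA}$.

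\textbf{Gaussian step.} Your Davies argument fails for two independent reasons.

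First, for $E(t')=\int u^2e^{\psi}\,d\mu_{t'}$ one gets $E'\le\int u^2\bigl(\tfrac12|\nabla\psi|^2+\partial_{t'}\psi+\scal\bigr)e^{\psi}\,d\mu_{t'}$. Since $\scal\le CA/t'$, this integrates to $CA\log(t/s)$ over $[2s,t]$. The growth factor is $\exp\int\scal\,dt'$, and no weighting by $t'-s$ enters, so your weighted $L^2$ bound carries a factor $(t/s)^{CA}$. You noted this log-divergence yourself in the on-diagonal part. When the volume form changes, $L^2$ norms of densities are not controlled; only the $L^1$ mass is.

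Second, the bound $|\nabla\psi|_{g(t')}\le C\lambda$ for $\psi=\lambda d_s(\cdot,y)$ is unjustified. Hamilton's distance-distortion estimate is one-sided: $d_{t'}\ge d_s-C\sqrt{At'}$. From $|\Rm|\le A/t$ alone you only get $g(s)\le (t'/s)^{2(n-1)A}g(t')$, so $|\nabla d_s|_{g(t')}$ can be of order $(t'/s)^{(n-1)A}$. An additive comparison of distance functions would not bound a pointwise gradient anyway. Distance expansion is genuinely not controlled by these hypotheses; this is why the paper needs \textup{\textbf{(A3)}} and \textup{\textbf{(B4)}}.

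\textbf{A route that works.} Run a weighted $L^1$ estimate for the probability measures $\nu_{t'}=G(\cdot,t';y,s)\,d\mu_{t'}$. For these, $\frac{d}{dt'}\int\phi\,d\nu_{t'}=\int(\partial_{t'}\phi+\Delta\phi)\,d\nu_{t'}$, with no curvature term. Take $\phi=e^{-\lambda\rho}$, where on each dyadic interval $[2^ks,2^{k+1}s]$ the function $\rho$ is a fixed smoothing, at scale $\sqrt{2^ks}$, of the distance to the \emph{endpoint} $x$, so that $|\nabla\rho|\le C$ and $|\Delta\rho|\le C(2^ks)^{-1/2}$ there. Because the weight decreases in $\rho$, expansion only helps, and the one-sided contraction bound controls the jumps between intervals. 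Summing over intervals gives $\nu_{t/2}(B_{t/2}(x,R))\le\exp\bigl(-(d_s(x,y)-R-C\sqrt t)_+^2/(Ct)\bigr)$. Insert this into $G(x,t;y,s)=\int G(x,t;z,t/2)\,d\nu_{t/2}(z)$, combined with the bounded-geometry Gaussian on $[t/2,t]$, to obtain the stated bound in terms of $d_s$.
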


	\section{Geometrically nonnegative curvature operators} \label{GNNCO}
	
	\subsection{Definition and convex-geometric characterization}
	Hereafter set \(n\geq 2\). Let \(V=\mathbb{R}^{n}\) be endowed with its standard
	Euclidean inner product. A two-form \(\alpha\in\Lambda^{2}V\) is called \emph{simple} (or \emph{decomposable}) if it can be written as $\alpha=u\wedge v$ for some \(u,v\in V\). 
	We identify \(\Lambda^{2}V\) with \(\mathfrak{so}(n)\) by associating
	to \(u\wedge v\) the skew-symmetric endomorphism
	$
	w\mapsto \langle v,w\rangle u-\langle u,w\rangle v.
	$
	Under this identification, the nonzero simple two-forms correspond precisely to the skew-symmetric endomorphisms of rank \(2\). We equip \(\mathfrak{so}(n)\) with the \(O(n)\)-invariant inner product
	$
	\langle A,B\rangle
	:=
	-\frac{1}{2}\operatorname{tr}(AB).
	$
	
	Let
	$
	\mathcal{A}_{n}
	=
	S^{2}_{B}\bigl(\mathfrak{so}(n)\bigr)
	$
	denote the vector space of algebraic curvature operators on \(V\).
	Thus, an element \(\mathscr R\in\mathcal{A}_{n}\) may be regarded either as a
	symmetric bilinear form on \(\mathfrak{so}(n)\), or as a self-adjoint
	endomorphism
	$
	\mathscr R\colon\mathfrak{so}(V)\rightarrow\mathfrak{so}(V),
	$
	satisfying the first Bianchi identity. We write
	$
	\mathscr R(\alpha,\beta)
	=
	\langle \mathscr R\alpha,\beta\rangle.
	$ We equip \(\mathcal A_n\), with the Hilbert--Schmidt inner product \[ \langle\mathscr R,\mathscr S\rangle := \operatorname{tr}(\mathscr R \mathscr S). \]
	
	Two algebraic curvature operators \(\mathscr R_{1}, \mathscr R_{2}\in\mathcal{A}_{n}\)
	are said to be \emph{isometric} if there exists \(A\in O(n)\) such
	that
	\[
	\mathscr R_{2}(\alpha,\beta)
	=
	\mathscr R_{1}(A^{-1}\alpha,A^{-1}\beta)
	\]
	for all \(\alpha,\beta\in\mathfrak{so}(n)\), where the action of
	\(A\) on \(\mathfrak{so}(n)\cong\Lambda^{2}V\) is induced by
	$
	A(u\wedge v)=Au\wedge Av.
	$
	
	Let \(\mathscr R_{0}\in\mathcal{A}_{n}\) denote the curvature operator of
	$
	\mathbb S^{2}\times\mathbb{R}^{n-2},
	$
	where \(\mathbb S^{2}\) is the two-dimensional sphere of constant
	sectional curvature \(1\).
	
	\begin{defn}[Geometrically nonnegative curvature]\label{def:geometrically-nonnegative}
		An algebraic curvature operator \(\mathscr R\in\mathcal{A}_{n}\) is called
		\emph{geometrically nonnegative} if it can be expressed as a finite
		linear combination, with nonnegative coefficients, of algebraic
		curvature operators that are isometric to \(\mathscr R_{0}\). 
		The set of geometrically nonnegative algebraic curvature operators
		is denoted by
		$
		\mathcal{C}_{\mathrm{geom}\geq0}.
		$
	\end{defn}
	
	We next give an equivalent algebraic description of the generators
	of \(\mathcal C_{\mathrm{geom}\geq0}\). For
	\(\alpha,\beta\in\mathfrak{so}(n)\), define the rank-one endomorphism
	$
	\alpha\otimes\beta
	\colon
	\mathfrak{so}(n)\rightarrow\mathfrak{so}(n)
	$
	by
	$
	(\alpha\otimes\beta)(\xi)
	=
	\langle\beta,\xi\rangle\alpha.
	$
	The associated bilinear form is
	\[
	(\alpha\otimes\beta)(\xi,\eta)
	:=
	\left\langle
	(\alpha\otimes\beta)(\xi),\eta
	\right\rangle
	=
	\langle\beta,\xi\rangle
	\langle\alpha,\eta\rangle.
	\]
	In particular,
	$
	(\alpha\otimes\alpha)(\xi)
	=
	\langle\alpha,\xi\rangle\alpha
	$
	and
	$
	(\alpha\otimes\alpha)(\xi,\eta)
	=
	\langle\alpha,\xi\rangle
	\langle\alpha,\eta\rangle.
	$
	If
	\(\mathscr S \in \mathcal A_n\), then it holds
	\begin{equation}\label{eq:rank-one-pairing}
	\langle\alpha\otimes\beta,\mathscr S\rangle
	=
	\langle\mathscr S\alpha,\beta\rangle
	=
	\mathscr S(\alpha,\beta).
	\end{equation}
	
	\begin{prop}[Description by simple generators] \label{prop:geom-generators}
		An algebraic curvature operator \(\mathscr R\in\mathcal{A}_{n}\) is
		geometrically nonnegative if and only if it can be written as
		\[
		\mathscr R
		=
		\sum_{i=1}^{m}
		\lambda_{i}\,
		\alpha_{i}\otimes\alpha_{i},
		\]
		where \(\lambda_{i}\geq0\) and each
		\(\alpha_{i}\in\mathfrak{so}(n)\) has rank \(2\) (equivalently, under \(\mathfrak{so}(n)\cong\Lambda^{2}\mathbb{R}^{n}\), it is a nonzero simple two-form).
	\end{prop}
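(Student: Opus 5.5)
The plan is to show that the curvature operator $\mathscr R_0$ of $\mathbb S^2\times\mathbb R^{n-2}$ is itself of the form $\lambda\,\alpha\otimes\alpha$ for a single rank-two $\alpha$, and that the $O(n)$-action takes such elements to elements of the same form. The proposition then follows because both sides describe nonnegative combinations of one and the same orbit.

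First we compute $\mathscr R_0$ concretely. Choose an orthonormal basis $e_1,\dots,e_n$ so that $\mathbb S^2$ is tangent to $\operatorname{span}\{e_1,e_2\}$. Its only nonzero sectional curvature is $K(e_1,e_2)=1$. As a bilinear form on $\Lambda^2V$, $\mathscr R_0$ then satisfies $\mathscr R_0(e_i\wedge e_j,e_k\wedge e_l)=0$ unless $\{i,j\}=\{k,l\}=\{1,2\}$, and $\mathscr R_0(e_1\wedge e_2,e_1\wedge e_2)=1$. Hence
\[
\mathscr R_0=c\,(e_1\wedge e_2)\otimes(e_1\wedge e_2),
\]
where $c>0$ is a normalization constant that depends on $|e_1\wedge e_2|$ in the chosen inner product $\langle A,B\rangle=-\frac12\operatorname{tr}(AB)$. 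For this inner product the normalization gives $|e_1\wedge e_2|=1$, so $c=1$; in any case $c>0$, which is all that is needed. We should also check that $\alpha\otimes\alpha$ satisfies the first Bianchi identity whenever $\alpha$ is simple, since this is exactly what makes $\alpha\otimes\alpha$ lie in $\mathcal A_n$. For $\alpha=e_1\wedge e_2$ this follows from the computation above, and for general simple $\alpha$ it follows from equivariance, as in the next step.

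Next comes equivariance. For $A\in O(n)$, the operator isometric to $\alpha\otimes\alpha$ via $A$ is
\[
(\xi,\eta)\mapsto\langle\alpha,A^{-1}\xi\rangle\langle\alpha,A^{-1}\eta\rangle=\langle A\alpha,\xi\rangle\langle A\alpha,\eta\rangle,
\]
using that the induced action of $A$ on $\mathfrak{so}(n)$ is orthogonal. This operator is therefore $(A\alpha)\otimes(A\alpha)$, and $A(e_1\wedge e_2)=Ae_1\wedge Ae_2$ is again simple. Conversely, every nonzero simple $\alpha=u\wedge v$ can be written, after Gram--Schmidt, as $\alpha=s\,u'\wedge v'$ with $u',v'$ orthonormal and $s>0$. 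Taking $A\in O(n)$ with $Ae_1=u'$ and $Ae_2=v'$ gives $\alpha\otimes\alpha=s^2\,(A e_{12})\otimes(A e_{12})$, which is a positive multiple of a copy of $\mathscr R_0$ isometric to it via $A$.

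Both directions now follow. A nonnegative combination of isometric copies of $\mathscr R_0$ is a nonnegative combination of terms $\alpha_i\otimes\alpha_i$ with $\alpha_i$ simple. Conversely, each term $\lambda_i\,\alpha_i\otimes\alpha_i$ equals $\lambda_i s_i^2$ times an isometric copy of $\mathscr R_0$. The only point requiring care is the first step: verifying that $\mathscr R_0$ has no components other than the $e_1\wedge e_2$ one, and pinning down the sign conventions. We expect this to be the main obstacle, and it is a routine bookkeeping check.
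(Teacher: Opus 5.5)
Your proposal is correct and follows essentially the same route as the paper. Both arguments identify $\mathscr R_0=\alpha_0\otimes\alpha_0$ with $\alpha_0=e_1\wedge e_2$, use $A\cdot(\alpha\otimes\alpha)=(A\alpha)\otimes(A\alpha)$ for $A\in O(n)$, and rescale a nonzero simple $\alpha$ to unit length (you via Gram--Schmidt and the factor $s^2$, the paper via $\widehat\alpha_i=\alpha_i/|\alpha_i|$) to match isometric copies of $\mathscr R_0$ with positive multiples of $\alpha\otimes\alpha$.
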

	
	\begin{proof}
		Fix an orthonormal basis
		\(\{e_{1},\ldots,e_{n}\}\) of \(\mathbb{R}^{n}\), and set
		$
		\alpha_{0}:=e_{1}\wedge e_{2}\in\mathfrak{so}(n).
		$
		With our normalization, $
		\mathscr R_{0}=\alpha_{0}\otimes\alpha_{0}.$
		
		We first observe that the curvature operators isometric to \(\mathscr R_{0}\)
		are of the form
		$
		\alpha\otimes\alpha,
		$
		where \(\alpha\in\mathfrak{so}(n)\) has rank \(2\) and norm \(1\). Indeed, if \(A\in O(n)\), then
		\[
		A\cdot \mathscr R_{0}
		=
		(A\alpha_{0})\otimes(A\alpha_{0}).
		\]
		Since \(A\) preserves both the inner product and the rank,
		\(A\alpha_{0}\) has rank \(2\) and norm \(1\). Conversely, let \(\alpha\in\mathfrak{so}(n)\) have rank \(2\) and
		norm \(1\), hence there is an
		orthonormal pair \(u,v\in\mathbb{R}^{n}\) such that
		$
		\alpha=u\wedge v.
		$
		Choose \(A\in O(n)\) satisfying
		$
		Ae_{1}=u,
		\,
		Ae_{2}=v.
		$
		Then
		$
		\alpha\otimes\alpha
		=
		A\cdot \mathscr R_{0}$.
		
		Suppose now that \(\mathcal R\) is geometrically nonnegative. By
		definition and the above remark, there exist coefficients \(\mu_{i}\geq0\) and   unit
		rank-two elements \(\alpha_{i}\in\mathfrak{so}(n)\) such that
		$
		\mathscr R
		=
		\sum_{i=1}^{m}
		\mu_{i}\,\alpha_{i}\otimes\alpha_{i}.
		$
		Conversely, suppose that
		$
		\mathscr R
		=
		\sum_{i=1}^{m}
		\lambda_{i}\,\alpha_{i}\otimes\alpha_{i},
		$
		where \(\lambda_{i}\geq0\) and every
		\(\alpha_{i}\in\mathfrak{so}(n)\) has rank \(2\). 
		Then the
		normalization $
		\widehat{\alpha}_{i}
		:=
		\frac{\alpha_{i}}{|\alpha_{i}|}
		$ is well defined. Moreover,
		we can write
		\[
		\mathscr R
		=
		\sum_{i=1}^{m}
		\lambda_{i}|\alpha_{i}|^{2}
		\widehat{\alpha}_{i}\otimes\widehat{\alpha}_{i}.
		\]
		As
		$
		\lambda_{i}|\alpha_{i}|^{2}\geq0
		$ and 	\(\widehat{\alpha}_{i}\otimes\widehat{\alpha}_{i}\) is isometric to
		\(\mathscr R_{0}\),
		the statement follows.
	\end{proof}

	We denote by
	$
	\mathcal{C}_{\mathrm{sec}\geq0}
	$ and $\mathcal{C}_{\mathrm{op}\geq0}$
	the cone of algebraic curvature operators with nonnegative sectional
	curvature and of positive semidefinite curvature operators, respectively.

	\begin{lemma}[Closedness and supporting half-spaces]\label{lem:geom-cone-properties}
		The set \(\mathcal C_{\mathrm{geom}\geq0}\) is a closed, convex, and
		\(O(n)\)-invariant cone. Moreover, the homogeneous closed half-spaces containing
		\(\mathcal C_{\mathrm{geom}\geq0}\) are 
		\[
		H_{\mathscr S}
		:=
		\left\{
		\mathscr R\in\mathcal A_n:
		\langle\mathscr R,\mathscr S\rangle\geq0
		\right\},
		\]
		where \(\mathscr S\in\mathcal A_n\) has nonnegative sectional
		curvature. Consequently,
		\[
		\mathcal C_{\mathrm{geom}\geq0}
		=
		\bigcap_{\mathscr S\in\mathcal C_{\mathrm{sec}\geq0}}
		\left\{
		\mathscr R\in\mathcal A_n:
		\langle\mathscr R,\mathscr S\rangle\geq0
		\right\}.
		\]
	\end{lemma}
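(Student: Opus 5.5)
We prove Lemma~\ref{lem:geom-cone-properties} by working with the description by simple generators from Proposition~\ref{prop:geom-generators}. By that proposition, \(\mathcal C_{\mathrm{geom}\geq0}\) is the convex cone generated by the set
\[
K:=\{\alpha\otimes\alpha:\ \alpha\in\mathfrak{so}(n)\ \text{of rank }2,\ |\alpha|=1\}.
\]
Convexity and the cone property hold by definition. \(O(n)\)-invariance follows from \(A\cdot(\alpha\otimes\alpha)=(A\alpha)\otimes(A\alpha)\), shown in the proof of Proposition~\ref{prop:geom-generators}. Note also that \(\alpha\otimes\alpha\) satisfies the first Bianchi identity precisely because \(\alpha\) is simple, so \(K\subset\mathcal A_n\).

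For closedness we use that \(K\) is compact. It is the continuous image of the unit simple two-forms, which form a closed bounded subset of \(\mathfrak{so}(n)\), since rank \(\le2\) is a closed condition. Moreover \(0\notin\operatorname{conv}K\), because \(\operatorname{tr}(\alpha\otimes\alpha)=|\alpha|^2=1\) for every element of \(K\). By Carathéodory's theorem in the finite-dimensional space \(\mathcal A_n\) of dimension \(N\), every element of the cone is \(\sum_{i\le N}\lambda_i\,\alpha_i\otimes\alpha_i\). Now take a convergent sequence \(\mathscr R_k\to\mathscr R\) in the cone. Taking the trace gives \(\sum_i\lambda_i^{(k)}=\operatorname{tr}\mathscr R_k\), which is bounded. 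Passing to a subsequence, the coefficients \(\lambda_i^{(k)}\) and the generators \(\alpha_i^{(k)}\) converge, and the limit is a representation of \(\mathscr R\) as an element of the cone. Hence the cone is closed.

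For the dual description, we compute the pairing of a generator with an arbitrary \(\mathscr S\in\mathcal A_n\). By \eqref{eq:rank-one-pairing},
\[
\langle\alpha\otimes\alpha,\mathscr S\rangle=\mathscr S(\alpha,\alpha).
\]
For \(\alpha=u\wedge v\) with \(u,v\) orthonormal, \(\mathscr S(\alpha,\alpha)\) is the sectional curvature of \(\mathscr S\) on the plane spanned by \(u,v\). It is important to fix the sign convention so that \(\mathscr R_0\) has positive sectional curvature; with \(\mathscr R_0=\alpha_0\otimes\alpha_0\) this gives \(\mathscr R_0(\alpha_0,\alpha_0)=1>0\), which is consistent. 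Therefore a homogeneous half-space \(H_{\mathscr S}\) contains every generator, and hence the whole cone, if and only if \(\mathscr S(\alpha,\alpha)\ge0\) for all simple \(\alpha\), that is, if and only if \(\mathscr S\in\mathcal C_{\mathrm{sec}\geq0}\).

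The final step, and the main point to get right, is the following. Every homogeneous closed half-space in \(\mathcal A_n\) has the form \(H_{\mathscr S}\) for some \(\mathscr S\in\mathcal A_n\); here one must take \(\mathscr S\) in \(\mathcal A_n\) itself, via the Riesz representation with respect to the Hilbert--Schmidt inner product restricted to \(\mathcal A_n\), and not in all of \(S^2(\mathfrak{so}(n))\). The displayed identity then follows from the bipolar theorem: a closed convex cone equals the intersection of the homogeneous closed half-spaces containing it. Concretely, if \(\mathscr R\notin\mathcal C_{\mathrm{geom}\geq0}\), the Hahn--Banach separation theorem applied to this closed convex cone yields an \(\mathscr S\in\mathcal A_n\) with \(\langle\mathscr R,\mathscr S\rangle<0\le\langle\mathscr C,\mathscr S\rangle\) for all \(\mathscr C\) in the cone. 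By the preceding paragraph, such an \(\mathscr S\) lies in \(\mathcal C_{\mathrm{sec}\geq0}\), so \(\mathscr R\) is excluded from the intersection.
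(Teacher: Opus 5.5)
Your proof is correct and follows essentially the same route as the paper: closedness comes from trace normalization ($\operatorname{tr}(\alpha\otimes\alpha)=|\alpha|^2$) plus compactness of the unit simple generators, and the dual cone is identified with $\mathcal C_{\mathrm{sec}\geq0}$ through $\langle\alpha\otimes\alpha,\mathscr S\rangle=\mathscr S(\alpha,\alpha)$. The differences are only minor variants: you bound Carath\'eodory coefficients where the paper uses the compact trace-one hull $\operatorname{conv}(X)$, and you separate directly by a homogeneous half-space where the paper first shows that any closed half-space containing the cone can be taken homogeneous.
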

	
	\begin{proof}
		By Proposition~\ref{prop:geom-generators}, it follows
		immediately that
		\(\mathcal C_{\mathrm{geom}\geq0}\) is a convex cone.	The cone is \(O(n)\)-invariant because, for every \(A\in O(n)\), it holds
		$
		A\cdot(\alpha\otimes\alpha)
		=
		(A\alpha)\otimes(A\alpha),
		$
		and \(A\alpha\) has rank \(2\) whenever \(\alpha\) has rank \(2\).

		To prove that the cone is closed, let
		$
		X
		:=
		\left\{
		\alpha\otimes\alpha:
		\operatorname{rank}(\alpha)=2,\ |\alpha|=1
		\right\}
		$
		be the set of algebraic curvature operators
		isometric to the curvature operator of
		\(\mathbb S^2\times\mathbb R^{n-2}\). The set of unit rank-two elements of \(\mathfrak{so}(n)\) is compact,
		and the map
		$
		\alpha\mapsto\alpha\otimes\alpha
		$
		is continuous. Hence \(X\) is compact. Since \(\mathcal A_n\) is finite-dimensional,
		the convex hull $K:=\operatorname{conv}(X)$ is compact as well. 
		
		For every \(\alpha\in\mathfrak{so}(n)\), one has
		$
		\operatorname{tr}(\alpha\otimes\alpha)=|\alpha|^2;
		$
		in particular, every element of \(X\) has trace \(1\). Since the trace
		is linear, every \(\mathscr P\in K\) also satisfies
		$
		\operatorname{tr}(\mathscr P)=1.
		$
		
		Moreover, we have
		\[
		\mathcal C_{\mathrm{geom}\geq0}
		=
		\left\{
		t\mathscr P:
		t\geq0,\ \mathscr P\in K
		\right\}.
		\]
		Indeed, the zero operator corresponds to \(t=0\), and for every nonzero $
		\mathscr R$, absorbing the squared norms of the generators into the
		coefficients, we may assume that
		$\mathscr R_i=\alpha_i\otimes\alpha_i$ with $|\alpha_i|=1$. We then set $t:=\sum_{i=1}^{m}\lambda_i>0$, and 
		$
		\mathscr P
		:=
		\sum_{i=1}^{m}\frac{\lambda_i}{t}\mathscr R_i\in K$, where we used the notation from Proposition~\ref{prop:geom-generators}.
		
		Now let
		\[
		\mathscr R_j=t_j\mathscr P_j
		\longrightarrow\mathscr R,
		\qquad
		t_j\geq0,\quad \mathscr P_j\in K.
		\]
		As \(\operatorname{tr}(\mathscr P_j)=1\), we get
		$
		t_j
		=
		\operatorname{tr}(\mathscr R_j)
		\rightarrow
		\operatorname{tr}(\mathscr R)=:t.
		$
		In particular, \(t\geq0\). By compactness of \(K\), after passing to
		a subsequence we may assume that
		$
		\mathscr P_j\rightarrow\mathscr P
		$
		for some \(\mathscr P\in K\). Therefore,
		\[
		\mathscr R
		=
		\lim_{j\to\infty}t_j\mathscr P_j
		=
		t\mathscr P
		\in\mathcal C_{\mathrm{geom}\geq0}.
		\]

		We now characterize the closed half-spaces containing the cone. Let
		\[
		H_{\mathscr S,a}
		:=
		\left\{
		\mathscr R\in\mathcal A_n:
		\langle\mathscr R,\mathscr S\rangle\geq a
		\right\}
		\]
		be a closed half-space containing
		\(\mathcal C_{\mathrm{geom}\geq0}\). Since
		\(0\in\mathcal C_{\mathrm{geom}\geq0}\), necessarily \(a\leq0\). We claim that
		$
		\mathcal C_{\mathrm{geom}\geq0}
		\subseteq
		H_{\mathscr S,0}.
		$
		Suppose, to the contrary, that some
		\(\mathscr R\in\mathcal C_{\mathrm{geom}\geq0}\) satisfies
		\[
		\langle\mathscr R,\mathscr S\rangle<0.
		\]
		As \(\mathcal C_{\mathrm{geom}\geq0}\) is a cone,
		$
		t\mathscr R\in\mathcal C_{\mathrm{geom}\geq0}
		$
		for every \(t\geq0\). On the other hand,
		\[
		\langle t\mathscr R, \mathscr S\rangle
		=
		t\langle\mathcal R,S\rangle
		\longrightarrow-\infty
		\qquad\text{as }t\to\infty.
		\]
		Hence, for sufficiently large \(t\),
		$
		\langle t\mathscr R,\mathscr S\rangle<a,$
		contradicting
		\(\mathcal C_{\mathrm{geom}\geq0}\subseteq H_{\mathscr S,a}\).
		
		It therefore suffices to characterize the homogeneous half-spaces
		\(H_{\mathscr S}\) containing the cone. By the description of its
		generators,
		$
		\mathcal C_{\mathrm{geom}\geq0}\subseteq H_{\mathscr S}
		$
		if and only if
		$
		\langle\alpha\otimes\alpha,\mathscr S\rangle\geq0
		$
		for every rank-two \(\alpha\in\mathfrak{so}(n)\). Using
		\[
		\langle\alpha\otimes\alpha,\mathscr S\rangle
		=
		\langle\mathscr S\alpha,\alpha\rangle
		=
		\mathscr S(\alpha,\alpha),
		\]
		this condition is equivalent to nonnegative sectional
		curvature. Finally, as \(\mathcal C_{\mathrm{geom}\geq0}\) is closed and
		convex, it is the intersection of all closed half-spaces containing
		it. Thus the statement follows.
	\end{proof}

	\subsection{Preservation under Hamilton's ODE}
	
	The aim of this subsection is to show that
	\(\mathcal C_{\mathrm{geom}\geq0}\) is preserved under Hamilton's ODE
	\begin{equation}\label{eq:Hamilton-ODE}
	\frac{d\mathscr R}{dt}=2Q(\mathscr R),
	\end{equation}
	which was claimed without proof in \cite{BW2}. 
	Here 
	$
	Q(\mathscr R)
	$ denotes Hamilton's quadratic map. We denote by
	$Q(\mathscr R,\mathscr S)$ its symmetric polarization, that is,
	\[
	Q(\mathscr R,\mathscr S)
	:=
	\frac12\bigl(Q(\mathscr R+ \mathscr S)-Q(\mathscr R)-Q(\mathscr S)\bigr).
	\]
	Thus $Q(\mathscr R, \mathscr S)=Q(\mathscr S,\mathscr R)$ and $Q(\mathscr R, \mathscr R)=Q(\mathscr R)$.

	Let
	\(\alpha,\beta\in\mathfrak{so}(n)\) have rank \(2\). 	By polarization of Hamilton's quadratic formula, we have
	\begin{equation}\label{eq:Q-rank-two}
	2Q(\alpha\otimes\alpha,\beta\otimes\beta)
	=
	\langle\alpha,\beta\rangle
	\bigl(
	\alpha\otimes\beta+\beta\otimes\alpha
	\bigr)
	+
	[\alpha,\beta]\otimes[\alpha,\beta],
	\end{equation}
	where
	$
	[\alpha,\beta]
	:=
	\alpha\beta-\beta\alpha.
	$
	Notice that the right-hand side of \eqref{eq:Q-rank-two} is
	self-adjoint: the first term is explicitly symmetrized, and the
	second term is of the form \(\gamma\otimes\gamma\).

	\begin{lemma}\label{lem:Q-rank-two-generators}
		Let
		\(\alpha,\beta\in\mathfrak{so}(n)\) have rank \(2\), and let
		$
		\mathscr S\in\mathcal C_{\mathrm{sec}\geq0}
		$
		satisfy
		$
		\mathscr S(\beta,\beta)=0.
		$
		Then
		$
		\left\langle
		Q(\alpha\otimes\alpha,\beta\otimes\beta),
		\mathscr S
		\right\rangle
		\geq0.
		$
	\end{lemma}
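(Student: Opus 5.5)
The plan is to reduce the inequality to the first and second variation of sectional curvature at a plane where it vanishes. By \eqref{eq:Q-rank-two} and \eqref{eq:rank-one-pairing},
\[
2\left\langle Q(\alpha\otimes\alpha,\beta\otimes\beta),\mathscr S\right\rangle
=2\langle\alpha,\beta\rangle\,\mathscr S(\alpha,\beta)+\mathscr S(\gamma,\gamma),
\qquad \gamma:=[\alpha,\beta].
\]
Since $\mathscr S\in\mathcal C_{\mathrm{sec}\geq0}$, the term $\mathscr S(\gamma,\gamma)$ is nonnegative. So the case $\langle\alpha,\beta\rangle=0$ is immediate, and the real task is to control the cross term $\langle\alpha,\beta\rangle\mathscr S(\alpha,\beta)$.

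\textbf{Step 1 (variational identities).} Normalize $\beta=e_1\wedge e_2$ with $e_1,e_2$ orthonormal, and let $P=\operatorname{span}(e_1,e_2)$. For $a,b\in\mathbb R^n$ put
\[
\beta(s)=(e_1+sa)\wedge(e_2+sb)=\beta+s\eta+s^2\,a\wedge b,
\qquad \eta=a\wedge e_2+e_1\wedge b .
\]
Each $\beta(s)$ is simple, so $f(s):=\mathscr S(\beta(s),\beta(s))\geq0$, and $f(0)=0$ by hypothesis. Hence $f'(0)=0$ and $f''(0)\geq0$, which give
\[
\mathscr S(\beta,\eta)=0
\qquad\text{and}\qquad
\mathscr S(\eta,\eta)+2\,\mathscr S(\beta,a\wedge b)\geq0 .
\]
In particular $\mathscr S(\beta,\cdot)$ vanishes on all \emph{mixed} two-forms $e_i\wedge w$ with $w\perp P$. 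Taking $a,b\in P$ shows it also vanishes on multiples of $\beta$.

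\textbf{Step 2 (decompose $\alpha$).} Write $\alpha=x\wedge y$ and split $x=x_P+x_\perp$, $y=y_P+y_\perp$ along $P\oplus P^\perp$. Then
\[
\alpha=\underbrace{x_P\wedge y_P}_{=c\beta}+\underbrace{(x_P\wedge y_\perp+x_\perp\wedge y_P)}_{\text{mixed}}+x_\perp\wedge y_\perp,
\qquad c=\langle\alpha,\beta\rangle .
\]
By Step 1, $\mathscr S(\alpha,\beta)=\mathscr S(\beta,x_\perp\wedge y_\perp)$. For the commutator, the $P$-$P$ part is a multiple of $\beta$ and commutes with it, and the $\perp$-$\perp$ part acts on $P^\perp$ and also commutes with $\beta$. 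Thus
\[
\gamma=[\text{mixed part},\beta],
\]
which is again mixed: essentially the rotation of $P$ by a quarter turn applied to the $P$-factors, of the form $\pm(\cdots)\wedge e_2\pm e_1\wedge(\cdots)$. The claim therefore becomes
\[
2c\,\mathscr S(\beta,x_\perp\wedge y_\perp)+\mathscr S(\gamma,\gamma)\geq0 .
\]

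\textbf{Step 3 (choosing the variation).} If $c\neq0$, then $x_P,y_P$ are independent. Using the freedom $x\wedge y=x'\wedge y'$ under $SL_2$ changes of $(x,y)$, choose the orthonormal frame $e_1,e_2$ of $P$ adapted to $x_P,y_P$; for instance, make $x_P$ and $y_P$ orthogonal, so that $x_P=\lambda e_1$, $y_P=\mu e_2$ with $\lambda\mu=c$. Then compute $\gamma$ explicitly and take $a,b$ to be suitable multiples of $y_\perp$ and $x_\perp$. The aim is that $\eta=\gamma$ and $a\wedge b=c\,x_\perp\wedge y_\perp$ hold simultaneously. The second-variation inequality of Step 1 is then exactly the desired inequality.

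This bookkeeping is the main obstacle. The sign convention for $[\cdot,\cdot]$ under the identification $u\wedge v\mapsto\langle v,\cdot\rangle u-\langle u,\cdot\rangle v$, together with the scaling between $\lambda,\mu$ and $a,b$, must make the product of the two scalings equal $c$ with the correct sign. If the signs come out wrong, the fallback is to replace $(a,b)$ by $(-a,-b)$, or to swap their roles, which flips or fixes the sign of $a\wedge b$ while $\eta$ changes only by a sign and $\mathscr S(\eta,\eta)$ is unchanged. Alternatively, one can use the curve $\exp(t\alpha)\cdot\beta$, whose velocity is exactly $\gamma$, and combine its second variation with the one above.
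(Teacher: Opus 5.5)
Your argument is correct, and it uses the same mechanism as the paper. Along a curve of simple two-forms through $\beta$ with initial velocity $[\alpha,\beta]$, the function $\mathscr S(\cdot,\cdot)$ is nonnegative and vanishes at $\beta$, so its second derivative there is nonnegative. The difference is the choice of curve.

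The paper uses $\beta+t[\alpha,\beta]+\langle\alpha,\beta\rangle t^2\alpha$. Its acceleration yields the term $2\langle\alpha,\beta\rangle\mathscr S(\alpha,\beta)$ directly. The price is that one must check, via an explicit kernel vector and after extending to $n\geq4$, that the curve stays of rank two.

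Your curve $(e_1+sa)\wedge(e_2+sb)$ is simple by construction and works in every dimension. The price is your first-variation step $\mathscr S(\alpha,\beta)=\mathscr S(\beta,x_\perp\wedge y_\perp)$. It compensates for the fact that the two accelerations differ by $2c(c\beta+\text{mixed})$, which $\mathscr S(\beta,\cdot)$ annihilates.

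The bookkeeping you deferred does close. With the paper's identification, $[u\wedge w,\beta]=-(\beta u)\wedge w$ for $u\in P$ and $w\perp P$. Hence for $x_P=\lambda e_1$, $y_P=\mu e_2$ you get $\gamma=\lambda\,e_2\wedge y_\perp+\mu\,e_1\wedge x_\perp$. Taking $a=-\lambda y_\perp$ and $b=\mu x_\perp$ gives $\eta=\gamma$ and $a\wedge b=\lambda\mu\,x_\perp\wedge y_\perp=c\,x_\perp\wedge y_\perp$, as required.

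Only the relative sign of the two terms of $\gamma$ matters, since an overall sign disappears in $\mathscr S(\gamma,\gamma)$. Your proposed fallback $(a,b)\mapsto(-a,-b)$ could not have repaired a wrong relative sign, because it leaves $a\wedge b$ unchanged. It is not needed.

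One sentence needs correcting. The claim $\mathscr S(\gamma,\gamma)\geq0$ does not follow from $\mathscr S\in\mathcal C_{\mathrm{sec}\geq0}$ in general: when $c\neq0$ and $x_\perp,y_\perp$ are independent, $\gamma$ has rank four.

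For a concrete example, work in $\mathbb R^4$:
\begin{itemize}
\item take $\beta=e_1\wedge e_2$ and $\alpha=(e_1+e_4)\wedge(e_2+e_3)$, so $\gamma=e_1\wedge e_4+e_2\wedge e_3$;
\item let $\mathscr S$ be the orthogonal projection of $\omega\otimes\omega$ onto $\mathcal A_4$, where $\omega=e_1\wedge e_3-e_2\wedge e_4$;
\item equivalently $\mathscr S=\omega\otimes\omega-\frac13\star$, with $\star$ the Hodge star for the orientation $e_1\wedge e_2\wedge e_3\wedge e_4$.
\end{itemize}
Then $\mathscr S(\sigma,\sigma)=\langle\omega,\sigma\rangle^2\geq0$ for simple $\sigma$, and $\mathscr S(\beta,\beta)=0$. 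Yet $\mathscr S(\gamma,\gamma)=-\frac23$, while $2\langle\alpha,\beta\rangle\mathscr S(\alpha,\beta)+\mathscr S(\gamma,\gamma)=0$. So only the combination is nonnegative, and that combination is exactly what your Step 3 proves.

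You invoke the false claim only for $c=0$, where it happens to be true. In that case $x_P$ and $y_P$ are dependent. After an $SL_2$ change of $(x,y)$ you may assume $y_P=0$, and then $\gamma=\lambda\,e_2\wedge y_\perp$ is simple. Equivalently, run Step 3 with $\mu=0$. You should state this justification rather than appeal to $\mathcal C_{\mathrm{sec}\geq0}$ alone.
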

	
	\begin{proof}
		As the assertion is homogeneous in \(\alpha\),	without loss of generality, we may assume that
		$
		|\alpha|=1.
		$ 
		If $n<4$, we extend $\alpha$, $\beta$ and $\mathscr S$ by
		zero to $\mathbb R^4$. The extended operator $\mathscr S$ still has
		nonnegative sectional curvature, since the orthogonal projection of
		a simple bivector onto $\Lambda^2\mathbb R^n$ is simple or zero for
		$n=2,3$. We may therefore assume that $n\geq4$.
		
		Since \(\alpha\) and \(\beta\) have rank \(2\), their supporting
		two-planes span a subspace of dimension at most \(4\). We may
		therefore choose an orthonormal basis
		\(\{e_i\}_{i=1}^{n}\) of \(\mathbb R^n\) such that
		$
		\alpha_{ij}=\beta_{ij}=0
		$
		whenever $i>4$ or $j>4$,
		and such that the upper \(4\times4\) blocks of \(\alpha\) and
		\(\beta\) are
		\[
		\hat \alpha
		=
		\begin{pmatrix}
		0&1&0&0\\
		-1&0&0&0\\
		0&0&0&0\\
		0&0&0&0
		\end{pmatrix}
		\qquad \text{
			and }
		\qquad 
		\hat \beta
		=
		\begin{pmatrix}
		0&a&0&b\\
		-a&0&-c&0\\
		0&c&0&d\\
		-b&0&-d&0
		\end{pmatrix}.
		\]
		Then $\langle\alpha,\beta\rangle=a$
		and the upper $4 \times 4$-block of $[\alpha, \beta]$ is of the form $\left[\begin{array}{cc}0 & -B \\  B & 0\end{array}\right]$ with $B= {\rm diag}[c, b]$.

		Consider now the family
		$
		\gamma(t)
		:=
		\beta+t[\alpha,\beta]+at^2\alpha
		\in\mathfrak{so}(n).
		$
		Its upper \(4\times4\) block is
		\[
		\hat{\gamma}(t)
		=
		\begin{pmatrix}
		0&a(1+t^2)&-ct&b\\
		-a(1+t^2)&0&-c&-bt\\
		ct&c&0&d\\
		-b&bt&-d&0
		\end{pmatrix}.
		\]
		
		We claim that \(\gamma(t)\) has rank \(2\) for every
		\(t\in\mathbb R\). In fact, assuming \((a,c)\neq(0,0)\), one computes that
		$\hat\gamma(t)\cdot(-c,ct,a(1+t^2),0)^\top = (1 + t^2)(0, 0, 0, bc - ad)^\top$. As 	${\rm rank}(\beta) = 2$ implies that $ad = bc$, we have found a non-trivial kernel element. It follows that
		$
		\operatorname{rank}\gamma(t)\leq2.
		$ Moreover, \(\gamma(t)\neq0\). Indeed, if
		\(\gamma(t)=0\), then $
		a(1+t^2)=0$, and $c=0$, a contradiction.
		Therefore, $\operatorname{rank}\gamma(t)=2$.

		Suppose now that \(a=c=0\). Then $\hat \gamma(t)$ has rank at most \(2\). Since \(\beta\) has rank \(2\),
		we have
		$
		(b,d)\neq(0,0),
		$
		and hence \(\gamma(t)\neq0\). Thus the claim follows
		also in this case.

		Since
		\(\mathscr S\in\mathcal C_{\mathrm{sec}\geq0}\), the function
		$
		f(t)
		:=
		\mathscr S(\gamma(t),\gamma(t))
		$
		is nonnegative. By hypothesis,
		$
		f(0)
		=
		\mathscr S(\beta,\beta)
		=
		0.
		$

		Since \(\mathscr S\) is symmetric and \(f\) has a minimum at \(t=0\), we get
		\[
		\begin{aligned}
		0 \leq \frac12 f''(0)
		&=
		\mathscr S(\gamma''(0),\gamma(0))
		+
		\mathscr S(\gamma'(0),\gamma'(0))
		=
		2a\,\mathscr S(\alpha,\beta)
		+
		\mathscr S([\alpha,\beta],[\alpha,\beta]).
		\end{aligned}
		\]

		On the other hand, pairing \eqref{eq:Q-rank-two} with
		\(\mathscr S\) and using \eqref{eq:rank-one-pairing}, we obtain
		\[
		\begin{aligned}
		2\left\langle
		Q(\alpha\otimes\alpha,\beta\otimes\beta),
		\mathscr S
		\right\rangle
		&=
		\langle\alpha,\beta\rangle
		\left\langle
		\alpha\otimes\beta+\beta\otimes\alpha,
		\mathscr S
		\right\rangle
		+
		\left\langle
		[\alpha,\beta]\otimes[\alpha,\beta],
		\mathscr S
		\right\rangle
		\\
		&=
		2a\,\mathscr S(\alpha,\beta)
		+
		\mathscr S([\alpha,\beta],[\alpha,\beta]),
		\end{aligned}
		\]
		from where the conclusion follows.
	\end{proof}

	\begin{prop}\label{prop:Hamilton-preservation}
		The cone
		\(\mathcal C_{\mathrm{geom}\geq0}\) of geometrically nonnegative
		curvature operators is preserved by Hamilton's ODE
		\eqref{eq:Hamilton-ODE}.
	\end{prop}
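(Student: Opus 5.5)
The plan is to use Hamilton's ODE maximum principle criterion for closed convex cones: a closed convex set $C\subset\mathcal A_n$ is preserved by $\frac{d\mathscr R}{dt}=2Q(\mathscr R)$ if, for every boundary point $\mathscr R\in\partial C$ and every supporting half-space $\{\langle\cdot,\mathscr S\rangle\ge 0\}$ of $C$ at $\mathscr R$ (so $\langle\mathscr R,\mathscr S\rangle=0$), we have $\langle Q(\mathscr R),\mathscr S\rangle\ge0$. For cones this tangency condition is equivalent to $Q(\mathscr R)$ lying in the tangent cone of $C$ at $\mathscr R$, which is Hamilton's sufficient condition; since $Q$ is locally Lipschitz, the standard argument (comparison of the distance to $C$, or a perturbation by $\varepsilon\,\mathrm{id}$-type terms followed by $\varepsilon\to0$) yields invariance. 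Lemma~\ref{lem:geom-cone-properties} tells us that $\mathcal C_{\mathrm{geom}\geq0}$ is closed, convex and $O(n)$-invariant, and that its supporting half-spaces are exactly $H_{\mathscr S}$ with $\mathscr S\in\mathcal C_{\mathrm{sec}\geq0}$. So it suffices to show
\[
\mathscr R\in\mathcal C_{\mathrm{geom}\geq0},\ \mathscr S\in\mathcal C_{\mathrm{sec}\geq0},\ \langle\mathscr R,\mathscr S\rangle=0\ \Longrightarrow\ \langle Q(\mathscr R),\mathscr S\rangle\ge0 .
\]

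By Proposition~\ref{prop:geom-generators}, write $\mathscr R=\sum_i\lambda_i\,\alpha_i\otimes\alpha_i$ with $\lambda_i>0$ (discarding zero coefficients) and $\alpha_i$ of rank $2$. Then, by \eqref{eq:rank-one-pairing}, $0=\langle\mathscr R,\mathscr S\rangle=\sum_i\lambda_i\,\mathscr S(\alpha_i,\alpha_i)$. Every summand is nonnegative because $\mathscr S$ has nonnegative sectional curvature and $\alpha_i$ is simple, so $\mathscr S(\alpha_i,\alpha_i)=0$ for every $i$. Bilinearity of $Q$ then gives
\[
\langle Q(\mathscr R),\mathscr S\rangle=\sum_{i,j}\lambda_i\lambda_j\,\bigl\langle Q(\alpha_i\otimes\alpha_i,\alpha_j\otimes\alpha_j),\mathscr S\bigr\rangle .
\]
Each term is nonnegative by Lemma~\ref{lem:Q-rank-two-generators}, applied with $\alpha=\alpha_i$ and $\beta=\alpha_j$, since $\mathscr S(\alpha_j,\alpha_j)=0$. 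This covers the diagonal terms $i=j$ as well. Hence the sum is nonnegative.

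The main obstacle is invoking the correct form of Hamilton's criterion rigorously. It must be valid for this closed convex cone, which may have nonsmooth boundary, and we need that testing against all supporting functionals $\mathscr S$ at $\mathscr R$ suffices. I would first cite Hamilton's ODE lemma in the form: if $\ell(Q(\mathscr R))\ge0$ for every supporting linear functional $\ell$ at every $\mathscr R\in\partial C$, then $C$ is invariant. If needed, I would supply a short proof. Consider $f(t)=\operatorname{dist}(\mathscr R(t),C)$. At a time where $f>0$, let $\mathscr P$ be the nearest point of $C$; the normalized difference $\mathscr R-\mathscr P$ defines a supporting functional at $\mathscr P$, and from the criterion together with the Lipschitz bound on $Q$ one gets $f'\le Kf$ in the barrier sense. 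Since $f(0)=0$, Gronwall's inequality gives $f\equiv0$. A subtle point to check is that the functional obtained from the nearest-point projection is of the form $H_{\mathscr S}$ with $\langle\mathscr P,\mathscr S\rangle=0$. This holds because $C$ is a cone: the nearest point satisfies $\langle \mathscr R-\mathscr P,\mathscr P\rangle=0$, and $-(\mathscr R-\mathscr P)$ is a supporting normal, so it lies in $\mathcal C_{\mathrm{sec}\ge0}$ by Lemma~\ref{lem:geom-cone-properties}.
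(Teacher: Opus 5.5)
Your proposal is correct and follows essentially the same route as the paper: you write $\mathscr R=\sum_i\lambda_i\,\alpha_i\otimes\alpha_i$ with $\lambda_i>0$, use the supporting condition $\langle\mathscr R,\mathscr S\rangle=0$ with $\mathscr S\in\mathcal C_{\mathrm{sec}\geq0}$ to get $\mathscr S(\alpha_i,\alpha_i)=0$, and then apply Lemma~\ref{lem:Q-rank-two-generators} termwise via bilinearity of $Q$. The only addition is your nearest-point/Gronwall sketch, which correctly spells out the ``standard invariance criterion'' that the paper simply cites; it includes the cone argument showing that the outer normal is a homogeneous supporting functional given by an operator with nonnegative sectional curvature.
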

	
	\begin{proof}
		Since \(\mathcal C_{\mathrm{geom}\geq0}\) is a closed convex cone, it
		suffices to show that the vector field \(Q\) points into its tangent
		cone at every boundary point.
		
		Let
		$
		\mathscr R
		\in
		\partial\mathcal C_{\mathrm{geom}\geq0},
		$
		and let
		$
		H_{\mathscr S}
		=
		\left\{
		\mathscr T\in\mathcal A_n:
		\langle\mathscr T,\mathscr S\rangle\geq0
		\right\}
		$
		be a supporting half-space of
		\(\mathcal C_{\mathrm{geom}\geq0}\) at \(\mathscr R\). Thus
		$
		\mathcal C_{\mathrm{geom}\geq0}
		\subseteq H_{\mathscr S}$, $\langle\mathscr R,\mathscr S\rangle=0$ and, by Lemma \ref{lem:geom-cone-properties}, we have
		$
		\mathscr S\in\mathcal C_{\mathrm{sec}\geq0}.
		$
		
		Express
		\[
		\mathscr R
		=
		\sum_{i=1}^{m}
		a_i\,\alpha_i\otimes\alpha_i, \quad \text{	where } \quad a_i>0 \]
		and each
		\(\alpha_i\in\mathfrak{so}(n)\) has rank \(2\). Notice that terms with
		\(a_i=0\) have been omitted. The fact that \(H_{\mathscr S}\) is a supporting half-space at
		\(\mathscr R\) yields
		\[
		\begin{aligned}
		0
		=
		\langle\mathscr R,\mathscr S\rangle
		&=
		\sum_{i=1}^{m}
		a_i
		\langle\alpha_i\otimes\alpha_i,\mathscr S\rangle
		=
		\sum_{i=1}^{m}
		a_i\mathscr S(\alpha_i,\alpha_i).
		\end{aligned}
		\]
		Since \(\mathscr S\) has nonnegative sectional curvature and \(a_i>0\), it follows that $\mathscr S(\alpha_i,\alpha_i)=0$
		for every \(i=1,\ldots,m\). The latter allows us to apply Lemma~\ref{lem:Q-rank-two-generators}, from which we conclude that
		\[
		\begin{aligned}
		\langle Q(\mathscr R),\mathscr S\rangle
		&=
		\sum_{i,j=1}^{m}
		a_i a_j
		\left\langle
		Q(\alpha_i\otimes\alpha_i,
		\alpha_j\otimes\alpha_j),
		\mathscr S
		\right\rangle \geq 0.
		\end{aligned}
		\]
		Thus \(Q(\mathscr R)\) belongs to the tangent cone of
		\(\mathcal C_{\mathrm{geom}\geq0}\) at \(\mathscr R\). The standard
		invariance criterion for closed convex sets now implies that
		\(\mathcal C_{\mathrm{geom}\geq0}\) is preserved under Hamilton's
		ODE.
	\end{proof}

	\subsection{Evolution inequality under the Ricci flow} \label{subsec:evolution-inequality}
	
	Let $\mathscr I\in\mathcal A_n$ 
	denote the constant curvature operator of sectional curvature \(1\).
	Thus,
	$
	\operatorname{Ric}(\mathscr I)=(n-1)\operatorname{id}
	$
	and
	$
	\operatorname{scal}(\mathscr I)=n(n-1).
	$

	If $0\neq \mathscr S\in\mathcal C_{\mathrm{sec}\geq0}$, then
	$\langle\mathscr I, \mathscr S\rangle=\frac12\scal(\mathscr S)>0$. By compactness of
	the unit section of $\mathcal C_{\mathrm{sec}\geq0}$, there is
	$c_n>0$ such that
	$\langle\mathscr I,\mathscr S\rangle\geq c_n|\mathscr S|$. Hence
	$\mathscr I\in\operatorname{Int}\mathcal C_{\mathrm{geom}\geq0}$,
	and we may define
	\begin{equation}\label{eq:def-ell}
	\ell(\mathscr R)
	:=
	\min
	\left\{
	a\geq0:
	\mathscr R+a \mathscr I
	\in
	\mathcal C_{\mathrm{geom}\geq0}
	\right\}.
	\end{equation}
	Notice that  $\ell$ is finite, convex and globally Lipschitz. In fact, by Lemma~\ref{lem:geom-cone-properties},
	\[
	\ell(\mathscr R)=
	\max\left\{
	0,\,
	\max_{\substack{\mathscr S\in\mathcal C_{\mathrm{sec}\geq0}\\ |\mathscr S|=1}}
	\frac{-\langle \mathscr R, \mathscr S\rangle}{\langle\mathscr I,\mathscr S\rangle}
	\right\}.
	\]

	If
	\((M,g(t))_{t\in[0,T]}\) is a solution of the Ricci flow, we set
	$
	\ell(p,t)
	:=
	\ell\bigl(\operatorname{Rm}_{g(t)}(p)\bigr).
	$ The curvature operator evolves under the Ricci flow according to
	\begin{equation}\label{eq:Rm-evolution}
	(\nabla_t-\Delta)\operatorname{Rm}
	=
	2Q(\operatorname{Rm}),
	\end{equation}
	where \(\nabla_t\) denotes the natural space-time extension of the
	Levi--Civita connection that is compatible with the evolving metric.
	
	For symmetric bilinear forms \(A\) and \(B\) on \(\mathbb R^n\), we
	denote by
	$
	A\owedge B
	\in \mathcal A_n$
	their Kulkarni--Nomizu product, defined by
	\begin{equation}\label{eq:Kulkarni-Nomizu}
	(A\owedge B)_{ijkl}
	=
	A_{ik}B_{jl}
	+
	A_{jl}B_{ik}
	-
	A_{il}B_{jk}
	-
	A_{jk}B_{il}.
	\end{equation}
	
	With the conventions fixed above, from \cite[Lemma 2.1]{BW2}, Hamilton's quadratic map satisfies
	\begin{equation}\label{eq:Q-shift}
	Q(\mathscr R+aI)
	=
	Q(\mathscr R)
	+
	a\,\operatorname{Ric}(\mathscr R)\owedge\operatorname{id}
	+
	(n-1)a^2I,
	\end{equation}
	where we write $\Ric \owedge \id$ to denote $\Ric \owedge g$ when $g_{ij} = \delta_{ij}$.
	
	\begin{prop}\label{prop:ell-barrier}
		Along the Ricci flow, the function \(\ell\) satisfies
		\begin{equation}\label{eq:ell-barrier}
		(\partial_t-\Delta)\ell
		\leq
		\operatorname{scal}\,\ell
		+
		(n-1)(n-2)\ell^2
		\end{equation}
		in the lower-barrier sense. More precisely, for every
		\((q,\tau)\in M\times(0,T)\), there exist a spacetime neighborhood
		\(\mathcal U\) of \((q,\tau)\) and a smooth function
		\(\varphi\colon \mathcal U\to\mathbb R\) such that
		$
		\varphi\leq\ell$ on $\mathcal U$, with equality at $(q, \tau)$
		and
		\begin{equation} \label{eq_barrier}
		(\partial_t-\Delta)\varphi
		\leq
		\operatorname{scal}\,\ell
		+
		(n-1)(n-2)\ell^2 \qquad \text{ at } \quad (q,\tau).
		\end{equation}
		
	\end{prop}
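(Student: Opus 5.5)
Fix \((q,\tau)\) with \(a_0:=\ell(q,\tau)\). If \(a_0=0\), the constant \(\varphi\equiv0\) works, since \(\ell\geq0\) and the right-hand side of \eqref{eq_barrier} vanishes at \((q,\tau)\). So assume \(a_0>0\). The barrier will come from a supporting half-space. By the max-formula for \(\ell\) (from Lemma~\ref{lem:geom-cone-properties}) there is \(\mathscr S_0\in\mathcal C_{\mathrm{sec}\geq0}\), normalized so that \(\langle\mathscr I,\mathscr S_0\rangle=1\), with
\[
\langle \Rm(q,\tau)+a_0\mathscr I,\mathscr S_0\rangle=0
\qquad\text{and}\qquad
\langle\mathscr T,\mathscr S_0\rangle\ge0 \text{ for all } \mathscr T\in\mathcal C_{\mathrm{geom}\geq0}.
\]
Thus \(H_{\mathscr S_0}\) supports the cone at \(\mathscr R_0:=\Rm(q,\tau)+a_0\mathscr I\in\partial\mathcal C_{\mathrm{geom}\geq0}\).

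Extend \(\mathscr S_0\) to a smooth section \(\mathscr S\) near \((q,\tau)\) by parallel transport with respect to \(\nabla\) (radially along \(g(\tau)\)-geodesics from \(q\)) and \(\nabla_t\) (in time). Then \(\nabla_t\mathscr S=0\), \(\nabla\mathscr S=0\) and \(\Delta\mathscr S=0\) at \((q,\tau)\). Because \(\nabla_t\) and \(\nabla\) are metric-compatible, \(\mathscr I\) is parallel, so \(\langle\mathscr I,\mathscr S\rangle\) stays equal to \(1\) along these transports. Also \(\mathscr S\) remains in \(\mathcal C_{\mathrm{sec}\geq0}\) pointwise, since parallel transport is an isometry and the cone is \(O(n)\)-invariant.

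Set \(\varphi:=-\langle\Rm,\mathscr S\rangle\). The max-formula gives \(\varphi\leq\ell\) on \(\mathcal U\), with equality at \((q,\tau)\). At \((q,\tau)\), using \eqref{eq:Rm-evolution},
\[
(\partial_t-\Delta)\varphi=-2\langle Q(\Rm),\mathscr S_0\rangle .
\]
Apply \eqref{eq:Q-shift} with \(\Rm=\mathscr R_0-a_0\mathscr I\):
\[
Q(\Rm)=Q(\mathscr R_0)-a_0\,\Ric(\mathscr R_0)\owedge\id+(n-1)a_0^2\mathscr I .
\]
Proposition~\ref{prop:Hamilton-preservation}, or more precisely its proof via Lemma~\ref{lem:Q-rank-two-generators}, gives \(\langle Q(\mathscr R_0),\mathscr S_0\rangle\ge0\). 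Hence
\[
(\partial_t-\Delta)\varphi\le 2a_0\langle\Ric(\mathscr R_0)\owedge\id,\mathscr S_0\rangle-2(n-1)a_0^2 .
\]

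It remains to evaluate the middle term, which is the main obstacle: matching the constants in \eqref{eq_barrier} means tracking the normalizations of \(\owedge\) and \(\mathscr I\) carefully. Write \(\Ric(\mathscr R_0)=\Ric+a_0(n-1)\id\). The operator \(\Ric\owedge\id\) should pair with \(\mathscr S_0\) so that the contribution is at most \(\scal\) times \(\langle\mathscr I,\mathscr S_0\rangle\), plus a term coming from the traceless part. One then uses \(\langle\mathscr S_0,\mathscr R_0\rangle=0\) together with the nonnegative-sectional-curvature structure of \(\mathscr S_0\) to bound the Ricci pairing by \(\scal\). For \(\id\owedge\id\), I expect \(\id\owedge\id=c\,\mathscr I\) with \(c\) of order \(2\). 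Then the \(a_0^2\) terms should combine to \(2(n-1)^2a_0^2-2(n-1)a_0^2\) before normalization, that is, \((n-1)(n-2)a_0^2\) after dividing out \(c\).

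If the direct pairing bound for \(\Ric\) fails, I would fall back on choosing \(\mathscr S_0\) as an extremal supporting operator and exploiting \(O(n)\)-averaging. The goal is to write \(\langle\Ric\owedge\id,\mathscr S_0\rangle\) as a trace of \(\Ric\) against the nonnegative \(\Ric(\mathscr S_0)\), and to use \(\Ric(\mathscr S_0)\le\scal(\mathscr S_0)\id\). This yields a bound of the form \(\scal\,a_0\) up to the \(\ell^2\) correction absorbed into \((n-1)(n-2)\ell^2\), which would establish \eqref{eq_barrier}.
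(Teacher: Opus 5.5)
Up to the inequality
\[
(\partial_t-\Delta)\varphi\le 2a_0\langle\Ric(\mathscr R_0)\owedge\id,\mathscr S_0\rangle-2(n-1)a_0^2,
\]
your argument follows the paper: the supporting $\mathscr S_0$, the parallel extension, \eqref{eq:Q-shift}, and $\langle Q(\mathscr R_0),\mathscr S_0\rangle\ge0$ from the proof of Proposition~\ref{prop:Hamilton-preservation}. Your treatment of $a_0=0$ via $\varphi\equiv0$ is also fine.

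There is a genuine gap, however. The step you call ``the main obstacle'' is the whole content of the proposition, and neither plan closes it. Use $\langle A\owedge\id,\mathscr S\rangle=\langle A,\Ric(\mathscr S)\rangle$, $\scal=\scal(\mathscr R_0)-n(n-1)a_0$ and your normalization $\scal(\mathscr S_0)=2$. Then what remains after your reduction is precisely
\[
\langle\Ric(\mathscr R_0),\Ric(\mathscr S_0)\rangle\le\tfrac14\,\scal(\mathscr R_0)\,\scal(\mathscr S_0),
\]
with no slack at all.

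Your fallback fails on this constant. Pairing $\Ric(\mathscr R_0)\ge0$ against $\Ric(\mathscr S_0)\le\scal(\mathscr S_0)\id$ gives the constant $1$ instead of $\frac14$, i.e. $(\partial_t-\Delta)\varphi\le4\scal\,\ell+C_n\ell^2$. (Note that $\Ric=\Ric(\Rm)$ itself is not nonnegative, so the trace bound must be applied to $\Ric(\mathscr R_0)$.) Even the sharp pointwise bound $\Ric(\mathscr S_0)\le\frac12\scal(\mathscr S_0)\id$ only yields $2\scal\,\ell$. The excess multiple of $\scal\,\ell$ cannot be absorbed into $(n-1)(n-2)\ell^2$, because $\scal$ is not controlled by $\ell$. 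The coefficient one is also used essentially later: in Proposition~\ref{improved_Gaussian}, $\partial_t(t^{-1}\ell)\le\Delta(t^{-1}\ell)+\scal\,t^{-1}\ell$ is paired with the conjugate heat kernel. $O(n)$-averaging does not help, since it destroys the supporting property $\langle\mathscr R_0,\mathscr S_0\rangle=0$.

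Your first plan aims at the wrong target and miscounts. With $\id\owedge\id=2\mathscr I$, splitting off $(n-1)a_0\id$ leaves $+2(n-1)a_0^2$; there is no ``dividing out $c$''. You would then need $\langle\Ric\owedge\id,\mathscr S_0\rangle\le\frac12\scal+\frac12(n-1)(n-4)a_0$, whose $a_0$-term is negative for $n<4$ and can only come from complementary slackness. It is not a bound by $\scal$.

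The missing idea is to use complementary slackness generator by generator, as the paper does.

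\begin{itemize}
\item By Proposition~\ref{prop:geom-generators}, write $\mathscr R_0=\sum_\mu\lambda_\mu\,\alpha_\mu\otimes\alpha_\mu$ with $\lambda_\mu>0$, $\alpha_\mu=x_\mu\wedge y_\mu$ and $x_\mu,y_\mu$ orthonormal. Then $\Ric(\mathscr R_0)=\sum_\mu\lambda_\mu(x_\mu^\flat\otimes x_\mu^\flat+y_\mu^\flat\otimes y_\mu^\flat)$ and $\scal(\mathscr R_0)=2\sum_\mu\lambda_\mu$.
\item Since $0=\langle\mathscr R_0,\mathscr S_0\rangle=\sum_\mu\lambda_\mu\,\mathscr S_0(\alpha_\mu,\alpha_\mu)$ and every term is nonnegative, $K_{\mathscr S_0}(x_\mu,y_\mu)=0$ for every $\mu$.
\item Complete $x_\mu,y_\mu$ by $z_3,\dots,z_n$ to an orthonormal basis. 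Then
\[
\Ric_{\mathscr S_0}(x_\mu,x_\mu)+\Ric_{\mathscr S_0}(y_\mu,y_\mu)=\sum_j\bigl(K_{\mathscr S_0}(x_\mu,z_j)+K_{\mathscr S_0}(y_\mu,z_j)\bigr)\le\tfrac12\scal(\mathscr S_0),
\]
because the omitted terms $K_{\mathscr S_0}(z_i,z_j)$ are nonnegative.
\end{itemize}

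So it is the \emph{sum} of the two Ricci values in each generator plane, not a single Ricci value, that is bounded by $\frac12\scal(\mathscr S_0)$. Summing over $\mu$ gives exactly the displayed inequality, and hence \eqref{eq_barrier} with coefficient one in front of $\scal\,\ell$.
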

	
	\begin{proof}
		Fix an arbitrary
		$
		(q,\tau)\in M\times(0,T),
		$
		and set
		\[
		a:=\ell(q,\tau),
		\qquad
		\overline{\mathscr R}
		:=
		\operatorname{Rm}_{g(\tau)}(q)+aI.
		\]
		By the definition of \(\ell\),
		$
		\overline{\mathscr R}
		\in
		\mathcal C_{\mathrm{geom}\geq0}.
		$ If \(\overline{\mathscr R}\) belongs to the interior of
		\(\mathcal C_{\mathrm{geom}\geq0}\), then the minimality of \(a\)
		implies that \(a=0\). By continuity,
		$
		\operatorname{Rm}_{g(t)}(p)
		\in
		\mathcal C_{\mathrm{geom}\geq0}
		$
		in a spacetime neighborhood of \((q,\tau)\), and hence
		$
		\ell\equiv0
		$
		there. In this case, the function
		$
		\varphi\equiv0
		$
		is a suitable lower barrier.
		
		We may therefore assume that
		$
		\overline{\mathscr R}
		\in
		\partial\mathcal C_{\mathrm{geom}\geq0}.
		$
		By Lemma~\ref{lem:geom-cone-properties}, there exists a nonzero
		operator
		$
		\mathscr S\in\mathcal C_{\mathrm{sec}\geq0}
		$
		such that
		\begin{equation}\label{eq:Rbar-S-zero}
		\langle\overline{\mathscr R},\mathscr S\rangle=0
		\end{equation}
		and	$\langle\mathscr T,\mathscr S\rangle\geq0$
		for every
		$\mathscr T\in\mathcal C_{\mathrm{geom}\geq0}$. Since \(\mathscr S\) has nonnegative sectional curvature and is
		nonzero, its scalar curvature is strictly positive, and hence
		\begin{equation}\label{eq:I-S-pairing}
		\langle I,\mathscr S\rangle
		=
		\frac12\operatorname{scal}(\mathscr S)>0.
		\end{equation}
		
		Extend \(\mathscr S\) spatially at time \(\tau\) by parallel
		transport along radial geodesics issuing from \(q\), and extend it
		in time by means of a moving orthonormal frame. At \((q,\tau)\), the
		resulting local section, still denoted by \(\mathscr S\), satisfies
		\[
		\nabla\mathscr S=0,
		\qquad
		\Delta\mathscr S=0,
		\qquad
		\nabla_t\mathscr S=0.
		\]
		
		Now	consider, in a spacetime neighborhood $\mathcal U$ of \((q,\tau)\), the function
		\begin{equation}\label{eq:def-phi}
		\varphi
		:=
		-
		\frac{
			\langle\operatorname{Rm},\mathscr S\rangle
		}{
			\langle \mathscr I,\mathscr S\rangle
		}
		=
		-
		\frac{
			2\langle\operatorname{Rm},\mathscr S\rangle
		}{
			\operatorname{scal}(\mathscr S)
		},
		\end{equation}
		which is well-defined because of \eqref{eq:I-S-pairing}. We have $	\varphi(q,\tau)=a=\ell(q,\tau)$, due to
		\eqref{eq:Rbar-S-zero}.

		We next show that \(\varphi\leq\ell\) in a sufficiently small
		neighborhood of \((q,\tau)\). At every nearby point,
		$
		\operatorname{Rm}+\ell I
		\in
		\mathcal C_{\mathrm{geom}\geq0} \subset H_{\mathscr S}$, which yields
		$
		\left\langle
		\operatorname{Rm}+\ell I,\mathscr S
		\right\rangle
		\geq0.
		$ Thus
		$
		\varphi\leq\ell,
		$
		and \(\varphi\) is a smooth lower barrier for \(\ell\) at
		\((q,\tau)\).
		
		This formulation also implies the viscosity subsolution inequality:
		if a smooth function $\psi$ touches $\ell$ from above at
		$(q,\tau)$, then $\psi-\varphi$ has a local minimum there, and hence
		\[
		(\partial_t-\Delta)\psi
		\leq
		(\partial_t-\Delta)\varphi
		\qquad\text{at }(q,\tau).
		\]
		
		It remains to show that $\varphi$ satisfies \eqref{eq_barrier} at $(q, \tau)$. With this goal, using \eqref{eq:Rm-evolution} and the chosen extension of
		\(\mathscr S\), we obtain at \((q,\tau)\)
		\begin{equation}\label{eq:phi-evolution-first}
		(\partial_t-\Delta)\varphi
		=
		-
		\frac{
			2\langle Q(\operatorname{Rm}),\mathscr S\rangle
		}{
			\langle I,\mathscr S\rangle
		}.
		\end{equation}
		We aim to estimate the right hand side of this equality. 	
		As
		$
		\overline{\mathscr R}
		=
		\operatorname{Rm}+aI
		$
		belongs to
		\(\partial\mathcal C_{\mathrm{geom}\geq0}\), and \(\mathscr S\)
		defines a supporting half-space at
		\(\overline{\mathscr R}\),
		Proposition~\ref{prop:Hamilton-preservation} gives
		\[
		\begin{aligned}
		0
		&\leq
		\langle Q(\overline{\mathscr R}),\mathscr S\rangle
		=
		\langle Q(\operatorname{Rm}),\mathscr S\rangle
		+
		a\left\langle
		\operatorname{Ric}\owedge\operatorname{id},
		\mathscr S
		\right\rangle
		+
		(n-1)a^2\langle I,\mathscr S\rangle,
		\end{aligned}
		\]
		where we have used \eqref{eq:Q-shift} and $\operatorname{Ric}
		=
		\operatorname{Ric}(\operatorname{Rm})$.
		Substituting this estimate into
		\eqref{eq:phi-evolution-first} gives
		\begin{equation}\label{eq:phi-evolution-second}
		(\partial_t-\Delta)\varphi
		\leq
		\frac{
			2a
			\left\langle
			\operatorname{Ric}\owedge\operatorname{id},
			\mathscr S
			\right\rangle
		}{
			\langle I,\mathscr S\rangle
		}
		+
		2(n-1)a^2.
		\end{equation}
		
		It remains to estimate the first term on the right-hand side. Since
		$
		\overline{\mathscr R}
		\in
		\mathcal C_{\mathrm{geom}\geq0},
		$
		we can write
		\begin{equation}\label{eq:Rbar-decomposition}
		\overline{\mathscr R}
		=
		\sum_{\mu=1}^{N}
		\lambda_\mu\,
		\alpha_\mu\otimes\alpha_\mu,
		\qquad
		\lambda_\mu>0,
		\end{equation}
		where
		$
		\alpha_\mu=x_\mu\wedge y_\mu
		$
		and \(x_\mu,y_\mu\) are orthonormal for each $\mu$. For the generator
		$
		\alpha_\mu
		\otimes
		\alpha_\mu,
		$
		the corresponding Ricci tensor is the orthogonal projection onto
		\(\operatorname{span}\{x_\mu,y_\mu\}\). Therefore,
		\begin{equation}\label{eq:Ric-expansion}
		\operatorname{Ric}
		+
		(n-1)a\,\operatorname{id}
		=
		\sum_{\mu=1}^{N}
		\lambda_\mu
		\left(
		x_\mu^\flat\otimes x_\mu^\flat
		+
		y_\mu^\flat\otimes y_\mu^\flat
		\right).
		\end{equation}
		Taking the trace gives
		\begin{equation}\label{eq:scalar-expansion}
		\operatorname{scal}
		+
		n(n-1)a
		=
		2\sum_{\mu=1}^{N}\lambda_\mu.
		\end{equation}
		
		On the other hand,
		\eqref{eq:Rbar-S-zero} and
		\eqref{eq:Rbar-decomposition} give
		\[
		\begin{aligned}
		0
		&=
		\langle\overline{\mathscr R},\mathscr S\rangle
		=
		\sum_{\mu=1}^{N}
		\lambda_\mu
		\langle
		\alpha_\mu\otimes\alpha_\mu,
		\mathscr S
		\rangle
		=
		\sum_{\mu=1}^{N}
		\lambda_\mu
		\mathscr S(\alpha_\mu,\alpha_\mu).
		\end{aligned}
		\]
		Every term in the last sum is nonnegative, because
		\(\mathscr S \in\mathcal C_{\mathrm{sec}\geq0}\). As
		\(\lambda_\mu>0\), it follows that $\mathscr S(\alpha_\mu,\alpha_\mu)=0$
		for every \(\mu\); equivalently, for the sectional curvatures we get
		$
		K_{\mathscr S}(x_\mu,y_\mu)=0.
		$ Fix \(\mu\), and extend \(x_\mu,y_\mu\) to an orthonormal basis
		$
		x_\mu,y_\mu,z_3,\ldots,z_n.
		$ of $\mathbb R^n$. Then we can estimate
		
		\begin{equation} \label{estim_Ric}
		\operatorname{Ric}_{\mathscr S}(x_\mu,x_\mu)
		+
		\operatorname{Ric}_{\mathscr S}(y_\mu,y_\mu)
		=
		\sum_{j=3}^{n}
		\left(
		K_{\mathscr S}(x_\mu,z_j)
		+
		K_{\mathscr S}(y_\mu,z_j)
		\right)
		\leq
		\frac12\operatorname{scal}(\mathscr S).
		\end{equation}

		With
		the conventions fixed above, one has
		$
		\left\langle
		A\owedge\operatorname{id},
		\mathscr S
		\right\rangle
		=
		\left\langle
		A,\operatorname{Ric}(\mathscr S)
		\right\rangle
		$
		for every symmetric bilinear form \(A\). Taking this into account, by means of
		\eqref{eq:Ric-expansion}, we obtain
		\[
		\begin{aligned}
		\left\langle
		\operatorname{Ric}\owedge\operatorname{id},
		\mathscr S
		\right\rangle
		&=
		\sum_{\mu=1}^{N}
		\lambda_\mu
		\left(
		\operatorname{Ric}_{\mathscr S}(x_\mu,x_\mu)
		+
		\operatorname{Ric}_{\mathscr S}(y_\mu,y_\mu)
		\right)
		-
		(n-1)a\,\operatorname{scal}(\mathscr S).
		\end{aligned}
		\]
		
		Consequently, thanks to \eqref{estim_Ric} and \eqref{eq:scalar-expansion}, we reach
		\[
		\begin{aligned}
		\frac{
			\left\langle
			\operatorname{Ric}\owedge\operatorname{id},
			\mathscr S
			\right\rangle
		}{
			\operatorname{scal}(\mathscr S)
		}
		&\leq
		\frac14
		\left(
		\operatorname{scal}
		+
		n(n-1)a
		\right)
		-
		(n-1)a
		=
		\frac14
		\left(
		\operatorname{scal}
		+
		(n-4)(n-1)a
		\right).
		\end{aligned}
		\]
		
		Plugging this into \eqref{eq:phi-evolution-second}, we conclude that
		\[
		\begin{aligned}
		(\partial_t-\Delta)\varphi
		&\leq 
		\operatorname{scal}\,a
		+
		(n-4)(n-1)a^2
		+
		2(n-1)a^2,
		\end{aligned}
		\]
		which finishes the proof.
	\end{proof}
	
	Since $\ell$ is locally Lipschitz, the standard equivalence between
	viscosity and distributional subsolutions implies that the same
	inequality holds in the distributional sense.
	
	\section{An exponential integrability estimate} \label{sec:4}
	
	The following lemma converts a scale-invariant
	\(L^{1+\beta}\)-bound and a pointwise bound of the form $\ell \leq \eta/t$ into exponential
	integrability along the time variable.
	
	\begin{lemma}\label{lem:exponential-integrability}
		For every $\beta\in(0,1)$ and $\delta,A>0$,
		there exists a constant
		$
		\eta=\eta(\beta,\delta,A)\in(0,1]
		$
		such that the following holds.
		
		Let \((\Omega,\mathcal F,\mu)\) be a probability space, let \(r>0\),
		and consider  a measurable function
		$
		f\colon\Omega\times(0,r^2]\rightarrow[0,\infty)
		$
		that satisfies
		\begin{equation}\label{eq:exp-pointwise-bound}
		f(\omega,t)\leq\frac{\eta}{t} \qquad \text{and} \qquad \int_0^{r^2}\int_\Omega
		f^{1+\beta}(\omega,t)\,d\mu(\omega)\,dt
		\leq
		Ar^{-2\beta}
		\end{equation}
		for almost every
		\((\omega,t)\in\Omega\times(0,r^2]\).
		Then
		\begin{equation}\label{eq:exp-integrability-conclusion}
		\int_\Omega
		\exp\left(
		A\int_0^{r^2}f(\omega,t)\,dt
		\right)
		\,d\mu(\omega)
		\leq
		1+\delta.
		\end{equation}
	\end{lemma}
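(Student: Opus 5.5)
The plan is to reduce to $r=1$, bound the moments $\int_\Omega F^k\,d\mu$ of the time integral by ordering the $k$ time variables, and then sum the exponential series. Throughout, $F(\omega):=\int_0^{r^2} f(\omega,t)\,dt$.

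\emph{Scaling.} Set $\tilde f(\omega,s):=r^2 f(\omega,r^2 s)$ for $s\in(0,1]$. Then $\tilde f\le \eta/s$, and
\[
\int_0^1\!\int_\Omega \tilde f^{1+\beta}\,d\mu\,ds = r^{2\beta}\int_0^{r^2}\!\int_\Omega f^{1+\beta}\,d\mu\,dt\le A .
\]
Moreover $\int_0^1\tilde f\,ds=F$. So I may take $r=1$.

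\emph{Expansion into moments.} I will write
\[
\int_\Omega e^{AF}\,d\mu = 1+\sum_{k\ge1}\frac{A^k}{k!}\int_\Omega F^k\,d\mu ,
\]
and show that every term with $k\ge1$ carries a small factor, and that the series converges geometrically once $\eta$ is small. For fixed $\omega$, symmetrization gives
\[
F^k = k!\int_{0<t_1<\dots<t_k<1} f(t_1)\cdots f(t_k)\,dt_1\cdots dt_k .
\]
In the later variables I use the pointwise bound $f(t_i)\le \eta/t_i$. Integrating out $t_2,\dots,t_k$ over $t_1<t_2<\dots<t_k<1$ then yields
\[
F^k\le k\,\eta^{k-1}\int_0^1 f(t_1)\,\log^{k-1}(1/t_1)\,dt_1 .
\]

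\emph{Smallness of the first factor.} Applying Hölder directly to $f(t_1)$ would leave the $k=1$ term $A\int F$ of size $\sim A^{1+1/(1+\beta)}$, which is not small. To fix this I interpolate, using
\[
f(t_1)\le \eta^{\theta}\,t_1^{-\theta} f(t_1)^{1-\theta}
\]
with a fixed $\theta\in(0,1)$, say $\theta=\tfrac12$. I then apply Hölder on $\Omega\times(0,1)$ with exponents $p=\frac{1+\beta}{1-\theta}$ on $f^{1-\theta}$ and $p'=\frac{1+\beta}{\beta+\theta}$ on $t^{-\theta}\log^{k-1}(1/t)$. Integrability of the latter needs $\theta p'<1$, which is equivalent to $\theta<1$, so this choice works. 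This gives
\[
\int_\Omega F^k\,d\mu\le k\,\eta^{k-1+\theta}A^{\frac{1-\theta}{1+\beta}}\Big(\int_0^1 t^{-\theta p'}\log^{(k-1)p'}(1/t)\,dt\Big)^{1/p'} .
\]
Substituting $t=e^{-u}$, the last integral equals $\Gamma\big((k-1)p'+1\big)/(1-\theta p')^{(k-1)p'+1}$. By Stirling, its $1/p'$ power is at most $C^{k}(k-1)!$ up to a polynomial factor, with $C=C(\beta,\theta)$.

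\emph{Summation.} Combining the bounds,
\[
\int_\Omega e^{AF}\,d\mu-1\le \eta^{\theta} A^{\frac{1-\theta}{1+\beta}}\sum_{k\ge1}\mathrm{poly}(k)\,(CA\eta)^{k-1}CA .
\]
For $\eta\le\eta(\beta,\delta,A)$ small, the series converges and the total is at most $\delta$.

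The main obstacle is exactly the $k=1$ (and generally the first-variable) term. Plain Hölder gives no smallness there, and the interpolation with the pointwise bound $f\le\eta/t$ is what produces the factor $\eta^{\theta}$. The remaining care is checking that the Gamma-function growth is only factorial, so that it cancels the $1/k!$ from the series and leaves a geometric series.
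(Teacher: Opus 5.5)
Your argument is correct, and it takes a genuinely different route from the paper.

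The paper never expands the exponential. For each fixed $\omega$ it interpolates $f\le f^{\theta}(\eta/t)^{1-\theta}$ and applies H\"older in $t$ alone, in terms of $\mathcal I(\omega)=\int_0^1 f^{1+\beta}(\omega,t)\,dt$, and then splits into two cases. If $\mathcal I\le 100\,\eta^{1/4}$, plain H\"older already makes $\int_0^1 f\,dt$ small. Otherwise it chooses the exponent depending on $\omega$, namely $\theta=(1+\beta)/\log\mathcal J$ with $\mathcal J=\eta^{-1/4}\mathcal I$. This yields the logarithmic bound $\int_0^1 f\,dt\le\frac{2e}{\beta}\eta^{1/2}\log\mathcal J$. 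Exponentiating gives a small power $\mathcal J^{1/q}$ with $q=\beta/(2eA\eta^{1/2})\ge1$, which is linearized as $1+\mathcal J/q$. Hence $e^{AF(\omega)}\le e^{100A\eta^{1/8}}+\frac{2eA}{\beta}\eta^{1/4}\mathcal I(\omega)$ pointwise, and only this bound is integrated over $\Omega$.

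You instead use the time-ordered moment representation, as in Khasminskii's lemma, spending the pointwise bound on every time except the earliest. The factor $\eta^{k-1}\log^{k-1}(1/t_1)/(k-1)!$ and the Gamma-function growth then cancel the $1/k!$. So one H\"older step with a fixed $\theta$ suffices and no case distinction is needed. The polynomial factor is not even necessary: $\Gamma(x+1)\le x^x$ for $x\ge1$ and $j^j\le e^j j!$ give the bound $(ep')^{k-1}(k-1)!$.

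Your route buys an explicit estimate, $\int_\Omega e^{AF}\,d\mu-1\le C(\beta,\theta)\,\eta^{\theta}A^{1+\frac{1-\theta}{1+\beta}}$ once $A\eta\le c(\beta,\theta)$. Its convergence threshold $A\eta\lesssim\beta$ is the natural one. Indeed, if $A\eta>\beta$, take $f=\eta/t$ on $\{e^{-L}<t<1\}$ for $\omega$ in a set of measure $\sim A\beta\eta^{-1-\beta}e^{-\beta L}$; letting $L\to\infty$ makes the left-hand side unbounded.

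The paper's route instead gives a deterministic statement for each $\omega$: $\int_0^1 f(\omega,t)\,dt$ is only logarithmic in $\mathcal I(\omega)$, with an exponential bound linear in $\mathcal I(\omega)$. Since the lemma is only used after integrating over $\Omega$, the two proofs are interchangeable for the paper's purposes.
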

	
	\begin{proof}
		We first reduce the proof to the case \(r=1\). Define
		$
		\widetilde f(\omega,s)
		:=
		r^2f(\omega,r^2s) \leq
		\frac{\eta}{s},
		\,
		s\in(0,1].
		$
		Then
		$
		\int_0^1\widetilde f(\omega,s)\,ds
		=
		\int_0^{r^2}f(\omega,t)\,dt
		$ and, 
		after the change of variable \(t=r^2s\), we also get
		\[
		\begin{aligned}
		\int_0^1\int_\Omega
		\widetilde f^{1+\beta}(\omega,s)
		\,d\mu(\omega)\,ds
		&= r^{2\beta}
		\int_0^{r^2}\int_\Omega
		f^{1+\beta}(\omega,t)
		\,d\mu(\omega)\,dt
		\leq A.
		\end{aligned}
		\]
		We henceforth drop the tilde and assume $r = 1$.

		Let \(\theta\in(0,1)\), to be chosen below, and set
		$
		b:=\frac{\theta}{1+\beta}.
		$
		Using the pointwise bound on \(f\), we have
		$
		f
		\leq
		f^\theta
		\left(\frac{\eta}{t}\right)^{1-\theta}.
		$
		Thus Hölder's inequality gives
		\[
		\int_0^1f(\omega,t)\,dt
		\leq
		\eta^{1-\theta}
		\int_0^1
		t^{\theta-1}f^\theta(\omega,t)\,dt \leq
		\eta^{1-\theta}
		\left(
		\int_0^1
		t^{-\frac{1-\theta}{1-b}}\,dt
		\right)^{1-b}
		\left(
		\int_0^1
		f^{1+\beta}(\omega,t)\,dt
		\right)^b.
		\]
		Since 
		\(\theta>b\), the first integral is finite and satisfies
		\[
		\begin{aligned}
		\int_0^1
		t^{-\frac{1-\theta}{1-b}}\,dt
		&=
		\left(
		1-\frac{1-\theta}{1-b}
		\right)^{-1}
		=
		\frac{1-b}{\theta-b}
		=
		\frac{1+\beta-\theta}{\beta\theta}
		\leq
		\frac{2}{\beta\theta}.
		\end{aligned}
		\]
		Plugging this back into the previous inequality yields
		\begin{equation}\label{eq:basic-interpolation}
		\int_0^1f(\omega,t)\,dt
		\leq
		\eta^{1-\theta}
		\left(
		\frac{2}{\beta\theta}
		\right)^{1-b}
		\mc	I^b, \qquad \text{with} \quad \mc I:=\int_0^{1}  f^{1+\beta}(\omega,t) dt 
		\end{equation}
		
		We now distinguish two cases.
		
		\medskip

		\emph{Case 1: \(\mc I \leq100 \, \eta^{1/4}\).} By Hölder's inequality,
		\[
		\int_0^1f(\omega,t)\,dt
		\leq
		\mc I^{\frac1{1+\beta}}  \leq 100 \, \eta^{\frac1{4(1+\beta)}}.
		\]
		This, using that 	
		\(\beta\in(0,1)\) and \(0<\eta\leq1\), leads to
		\begin{equation}\label{eq:small-I}
		\exp\left(
		A\int_0^1f(\omega,t)\,dt
		\right)
		\leq
		\exp\left(100A\, \eta^{1/8}\right).
		\end{equation}
		
		\medskip

		\emph{Case 2: \(\mc I >100 \, \eta^{1/4}\).} Set
		$
		\mc J:=\eta^{-1/4}\mc I > 100
		$ and choose
		\begin{equation}\label{eq:choice-theta}
		0 <	\theta
		:=
		\frac{1+\beta}{\log \mc J} <
		\frac12 \qquad \text{so that} \qquad \frac{2}{\beta\theta}>1
		\qquad \text{and} \qquad b=\frac{1}{\log\mc J}.
		\end{equation}
		From here, using \eqref{eq:basic-interpolation}, together with
		\(1-b\leq1\) and $	\mc J^b=e$, we obtain
		\[
		\begin{aligned}
		\int_0^1f(\omega,t)\,dt
		&\leq
		\eta^{1-\theta}
		\frac{2}{\beta\theta}
		\, \mc I^b
		=
		\eta^{1-\theta}
		\frac{2\log \mc J}{\beta(1+\beta)}
		\eta^{b/4} e
		\leq 
		\frac{2e}{\beta}
		\eta^{1/2}
		\log \mc J,
		\end{aligned}
		\]
		where we applied that $\eta \leq 1$, \(1+\beta\geq1\) and $\theta < \frac12$. Now set
		$
		q
		:=
		\frac{\beta}{2eA\eta^{1/2}}.
		$
		After decreasing \(\eta\) if necessary, we may assume that
		$
		q\geq 1.
		$
		Taking this into account, Young's inequality gives
		\[
		\exp\left(
		A\int_0^1 f(\omega,t)\,dt
		\right)
		\leq
		\mathcal J^{1/q} \leq	1+\frac{\mathcal J}{q}
		=
		1+
		\frac{2eA}{\beta}
		\eta^{1/4} \mc I.
		\]
		Combining  the latter and \eqref{eq:small-I}, we obtain, in
		both cases,
		\[
		\exp\left(
		A\int_0^1f(\omega,t)\,dt
		\right)
		\leq
		\exp\left(100A\eta^{1/8}\right)
		+
		\frac{2eA}{\beta}
		\eta^{1/4}\mc I.
		\]
		Integrating over \(\Omega\) and using the integral assumption from the statement, we get
		\begin{equation}\label{eq:final-exp-estimate}
		\int_\Omega
		\exp\left(
		A\int_0^1f(\omega,t)\,dt
		\right)
		\,d\mu(\omega)
		\leq
		\exp\left(100A\eta^{1/8}\right)
		+
		\frac{2eA^2}{\beta}\eta^{1/4}.
		\end{equation}
		Choosing
		$
		\eta
		:=
		\min\left\{
		1,\,
		\left(
		\frac{\log(1+\delta/2)}{100A}
		\right)^8,\,
		\left(
		\frac{\delta\beta}{4eA^2}
		\right)^4,\,
		\left(
		\frac{\beta}{2eA}
		\right)^2
		\right\}
		$
		makes the right-hand side at most \(1+\delta\), which proves
		the result.
	\end{proof}

	\section{Ricci flows with integral almost nonnegative curvature}
	\label{sec:integral-almost-nonnegative-curvature}
	
	In this section, we study Ricci flows starting from complete Riemannian manifolds and orbifolds with small integral cosectional curvature defect. More precisely, we assume uniform two-sided volume bounds, a scale-invariant Morrey bound for the initial defect, and a generalized segment inequality. We shall show that, under suitable smallness assumptions, the corresponding Ricci flows satisfy uniform curvature, noncollapsing, defect, and distance-distortion estimates. For simplicity, the estimates are first proved for manifolds; their extension to orbifolds is discussed in the proof of Theorem~\ref{Lp-RF-existence}.

	\subsection{Assumptions on the initial metric}
	\label{subsec:initial-assumptions}
	
	Let
	$
	v_0,\beta,\varepsilon,\alpha,D,\Upsilon>0
	$
	be constants whose values will be determined in the course of the
	argument. We assume that
	\[
	v_0,\varepsilon,\alpha\in(0,1],
	\qquad
	0<\beta<\frac12,
	\qquad
	D,\Upsilon\geq1.
	\]
	
	Let $(M^n,g)$
	be a complete Riemannian manifold with bounded curvature.  We denote by
	$
	d,\, d\mu,\, B(x,r)
	$
	the distance, the Riemannian measure, and the open metric ball
	determined by \(g\), respectively. We write
	\[
	\ell(x)
	:=
	\inf
	\left\{
	a\geq0:
	\operatorname{Rm}_g(x)+a \, \mathscr I_g(x)
	\in
	\mathcal C_{\mathrm{geom}\geq0}
	\right\}
	\]
	for the cosectional curvature defect of \(g\). Here \(\mathscr I_g(x)\) denotes the curvature operator of constant sectional curvature \(1\) on \((T_xM,g_x)\), that is,  \(\mathscr I_g(x)\) is the identity on \(\Lambda^2T_xM\).
	
	We first give the proof for geometrically nonnegative curvature.
	For the curvature-operator case, the same argument applies with
	$\ell$ equal to the negative part of the lowest eigenvalue of $\Rm$;
	see the proof of Theorem~\ref{Lp-RF-existence}.
	
	We shall consider initial metrics satisfying some or all of the
	following properties from Theorem \ref{Lp-RF-existence_short}: \textup{\textbf{(A1)}}, \textup{\textbf{(A2)}} with $\ell$ as above, and a generalized $(\alpha, D, \Upsilon)$-segment inequality, which will be denoted by \textup{\textbf{(A3)}} hereafter. For the latter, \eqref{eq:A3-pushforward} implies that, for every nonnegative Borel function
	\(f\colon M\to[0,\infty]\),
	\begin{equation}\label{eq:A3-integral}
	\int_{\Omega_{x,y}}
	\int_0^1
	f\bigl(\gamma^\omega_{x,y}(s)\bigr)
	\,ds\,d\nu_{x,y}(\omega)
	\leq
	\Upsilon r^{-n}
	\int_M f\,d\mu.
	\end{equation}
	
	\begin{rmk}\label{rem:initial-assumptions}
		(a)	Arguing as in \cite[Lemma 2.1]{BaZ}, a covering argument based on the uniform two-sided volume
		bounds \textup{\textbf{(A1)}} gives constants
		$
		C=C(n,v_0)<\infty
		$
		such that
		\begin{equation}\label{eq:global-volume-growth}
		\operatorname{vol}_g B(x,R)
		\leq
		C(1+R)^n e^{CR}
		\end{equation}
		for every $x\in M$ and $R>0$, where $C=C(n,v_0)$. In particular,
		every metric ball has finite volume, uniformly in terms of $n$ and
		$v_0$. This growth bound will be used for integrals with Gaussian
		weights.

		(b)	The generalized segment inequality should be regarded as a weak substitute for a lower Ricci curvature bound. The classical Cheeger--Colding segment inequality shows that a lower Ricci curvature bound provides quantitative control of averaged line integrals along minimizing geodesics in terms of ambient volume integrals. Here we retain only the feature needed in the subsequent argument: the existence of a sufficiently rich family of curves of controlled length whose averaged occupation measure is bounded by the Riemannian measure. Unlike in the classical setting, the curves need not be minimizing geodesics and their endpoints may vary within \(B(x,\alpha r)\) and \(B(y,\alpha r)\). This weaker formulation is adapted to the approximations constructed later, where a uniform lower Ricci curvature bound is not assumed.
	\end{rmk}

	\subsection{A priori assumptions}
	\label{subsec:apriori-assumptions}
	
	Let $\tau_0\in(0,1]$ be a fixed upper time bound, to be specified
	below, and let
	$
	(M,g(t))_{t\in[0,\tau]}
	$
	be a complete Ricci flow with $g(0)=g$ and uniformly bounded curvature on compact time intervals, where $0<\tau<\tau_0$. Integrations by parts on $M$ are justified by exhaustion cutoffs.
	On time intervals bounded away from zero, the curvature,
	volume-growth and Gaussian estimates make the cutoff errors vanish.

	We denote by
	$
	d_t,\, d\mu_t,\, B_t(x,r)
	$
	the distance, the Riemannian measure, and the open metric balls
	associated with \(g(t)\). In particular,
	$
	d_0=d,
	\,
	d\mu_0=d\mu,
	\,
	B_0(x,r)=B(x,r).
	$ We will also write $|\cdot|_t$ for $\vol_{g(t)}(\cdot)$. For \(x\in M\) and \(t\in[0,\tau]\), we set
	\[
	\ell(x,t)
	:=
	\inf
	\left\{
	a\geq0:
	\operatorname{Rm}_{g(t)}(x)+a \mathscr I_{g(t)}(x)
	\in
	\mathcal C_{\mathrm{geom}\geq0}
	\right\}.
	\]
	Thus,
	$
	\ell(x,0)=\ell(x),
	$
	where \(\ell(x)\) is the initial cosectional curvature defect
	introduced in Subsection~\ref{subsec:initial-assumptions}. In particular, from here we have
	\begin{equation}
	\operatorname{Ric}_{g(t)}
	\geq
	-(n-1)\ell(\cdot,t)\,g(t).
	\label{eq:Ricci-lower-from-defect}
	\end{equation}

	We now introduce the a priori bounds. For this purpose, fix 
	$
	\mathscr K,v_1, \mathscr A, \mathscr B, \mathscr C>0,
	$ and
	$0<\eta\leq1$.
	These constants and $\tau_0$ will be determined in the course of the
	argument. We shall consider Ricci flows satisfying some or all of
	the following a priori assumptions.
	
	\medskip

	\textnormal{\textbf{(B1) Upper curvature bound.}}
	For every \(x\in M\) and every \(0<t\leq\tau\),
	\begin{equation}\label{eq:B1}
	\left|\operatorname{Rm}_{g(t)}(x)\right|
	\leq
	\frac{\mathscr K}{t}.
	\end{equation}
	
	\medskip

	\textnormal{\textbf{(B2) Pointwise defect bound.}}
	For every \(x\in M\) and every \(0<t\leq\tau\),
	\begin{equation}\label{eq:B2}
	\ell(x,t)
	\leq
	\frac{\eta}{t}.
	\end{equation}
	
	\medskip

	\textnormal{\textbf{(B3) Parabolic noncollapsing.}}
	For every \(x\in M\),  \(0<t\leq\tau\), and 
	\(0<r\leq\sqrt t\),
	\begin{equation}\label{eq:B3}
	|B_t(x,r)|_t
	\geq
	v_1r^n.
	\end{equation}

	\medskip

	\textnormal{\textbf{(B4) Upper distance bound.}}
	For all \(x,y\in M\) and all \(0<r\leq1\) with
	$
	d(x,y)\leq r,
	$
	\begin{equation}\label{eq:B4}
	d_t(x,y)
	\leq
	2Dr \qquad \text{ for every } \qquad 
	0\leq t\leq\min\{r^2,\tau\}.
	\end{equation}

	\medskip

	\textnormal{\textbf{(B5) Spacetime Morrey-type defect bound.}}
	For every \(x\in M\) and every \(0<r\leq1\),
	\begin{equation}\label{eq:B5}
	\int_0^{\min\{r^2,\tau\}}
	\int_{B(x,r)}
	\ell^{1+\beta}(\cdot,t)\,d\mu\,dt
	\leq
	\mathscr Ar^{n-2\beta}.
	\end{equation}
	
	For \(x\in M\) and \(r>0\), set
	\[
	\widehat{\ell}_r^{1+\beta}(x)
	:=
	r^{-n} \int_M
	\exp\left(
	-\frac{d^2(x,\cdot)}{\mathscr C r^2}
	\right)
	\ell^{1+\beta}(\cdot,0)\,d\mu.
	\]
	
	\medskip

	\textnormal{\textbf{(B6) Gaussian bound for the curvature defect.}}
	For every \(x\in M\) and \(0<t\leq\tau\),
	\[
	\ell(x,t)
	\leq
	\mathscr B\,\widehat{\ell}_{\sqrt t}(x).
	\]
	
	\medskip 
	
	By \textup{\textbf{(B1)}}, the curvature remains bounded as $t\nearrow\tau$,
	so the flow extends to $[0,\tau']$ for some $\tau<\tau'<\tau_0$.
	In Propositions~\ref{prop:upper-curvature-improvement}--\ref{improved_Gaussian},
	we will improve the a priori assumptions \textup{\textbf{(B1)--(B6)}} on
	$[0,\tau]$ and show that they remain valid on this extension,
	after decreasing $\tau'$ if necessary, provided the involved
	constants satisfy suitable bounds. The admissible order of choices
	is shown in the following diagram; in particular, $\Upsilon$ can
	be chosen after $\alpha$.
	\begin{equation} \label{order-constants}
	\boxed{
		\begin{tikzpicture}[baseline=(current bounding box.center),
		>=stealth, every node/.style={inner sep=4pt}]
		\node (data) at (0,0) {$(n,v_0,D)$};
		\node (v) at (1.9,0) {$v_1$};
		\node (K) at (3.4,0) {$\mathscr K$};
		\node (CB) at (5.3,0) {$(\mathscr C,\mathscr B)$};
		\node (A) at (7.2,0) {$\mathscr A$};
		\node (eta) at (8.8,0) {$\eta$};
		\node (eps) at (10.3,0) {$\varepsilon$};
		\node (alpha) at (0,-1) {$\alpha$};
		\node (Upsilon) at (1.9,-1) {$\Upsilon$};
		\node (beta) at (8.8,1) {$\beta$};
		\node (tau) at (3.4,1) {$\tau_0$};
		\draw[->] (data) -- (v);
		\draw[->] (v) -- (K);
		\draw[->] (K) -- (CB);
		\draw[->] (CB) -- (A);
		\draw[->] (A) -- (eta);
		\draw[->] (eta) -- (eps);
		\draw[->] (data) -- (alpha);
		\draw[->] (alpha) -- (Upsilon);
		\draw[->] (Upsilon.east) -| (eta.south);
		\draw[->] (beta) -- (eta);
		\draw[->] (K) -- (tau);
		\end{tikzpicture}
	}
	\end{equation}
	More precisely, we require
	$\alpha=\alpha(D)\in(0,1/32]$ and
	$v_1=\frac12\overline v_1(n,v_0,D)$, together with
	$\mathscr K \geq \underline{ \mathscr K}(n,v_1) \geq v_1^{-1}$,
	$\mathscr C \geq \underline{ \mathscr C}(n,v_0,\mathscr K)$, $\mathscr B \geq \underline{ \mathscr B}(n,v_0,\mathscr K)$ and $\mathscr A \geq \underline{ \mathscr A}(n,v_0,\mathscr B, \mathscr C)$. Dependencies on preceding parameters in
	this hierarchy are understood implicitly.
	The remaining bounds are
	$
	\eta\leq
	\ov \eta(n,\beta,v_0,D,\Upsilon, \mathscr A,\mathscr K)
	$,
	$
	\varepsilon\leq
	\varepsilon_0(n,\beta,v_0,\mathscr K,\mathscr B, \mathscr C,\eta)
	$
	and
	\[
	0<\tau_0\leq \overline\tau_0(n,v_0,D,\mathscr K)\leq1.
	\]
	Each proposition uses only the initial assumptions
	\textup{\textbf{(A1)--(A3)}} and the original bounds \textup{\textbf{(B1)--(B6)}}.
	The bounds on the constants are established in the proofs below;
	since each depends only on parameters earlier in
	\eqref{order-constants}, all requirements can subsequently be
	satisfied simultaneously.

	\subsection{Extension of the curvature estimate (B1)}
	We first improve the upper curvature bound \textup{\textbf{(B1)}}. The
	proof is a blow-up argument using \textup{\textbf{(B2)}} and \textup{\textbf{(B3)}}.
	Failure of the bound would produce a nonflat $\kappa$-solution
	with positive asymptotic volume ratio, contradicting Perelman's theorem.
	
	\begin{prop}
		\label{prop:upper-curvature-improvement}
		If 
		$
		{\mathscr K} \geq  \underline{\mathscr K}(n,v_1)
		$ and $0<\tau<\tau_0\leq1$, then $g(t)$ extends to a complete
		Ricci flow with bounded curvature on $[0,\tau']$ for some
		$\tau<\tau'<\tau_0$, and \textup{\textbf{(B1)}} holds on this larger interval.
	\end{prop}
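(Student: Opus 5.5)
The plan is to prove a strict improvement, $|\operatorname{Rm}_{g(t)}|\le \frac{\mathscr K}{2t}$ on $M\times(0,\tau]$, by contradiction and blow-up. The argument uses only \textbf{(B2)} and \textbf{(B3)} on $[0,\tau]$ together with the standing facts that $g$ has bounded curvature and the flow has bounded curvature on compact subintervals. The extension then follows from continuity. Since the curvature is bounded on $[0,\tau]$, Shi's short-time existence theorem continues the complete bounded-curvature flow past $\tau$. The strict bound $\frac{\mathscr K}{2t}$ at $t=\tau$ and continuity of $t|\operatorname{Rm}|$ give \textbf{(B1)} with constant $\mathscr K$ on some $[0,\tau']$ with $\tau<\tau'<\tau_0$.

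\emph{Blow-up sequence.} Suppose no $\underline{\mathscr K}(n,v_1)$ works. Then there are flows $(M_i,g_i(t))_{t\in[0,\tau_i]}$ with $\tau_i\le1$ satisfying \textbf{(B2)} (with some $\eta\le1$) and \textbf{(B3)} (with the fixed $v_1$), and constants $\mathscr K_i\to\infty$, such that $\sup t|\operatorname{Rm}|>\mathscr K_i/2$. Because $g_i(0)$ has bounded curvature, $t|\operatorname{Rm}|\to0$ as $t\to0$. Hence Perelman's spacetime point-picking applies inside $M_i\times(0,\tau_i]$. It yields $(x_i,t_i)$ with $Q_i:=|\operatorname{Rm}|(x_i,t_i)$ and $t_iQ_i\ge \mathscr K_i/2\to\infty$, such that $|\operatorname{Rm}|\le4Q_i$ on the backward parabolic region $B_{t}(x_i,A_iQ_i^{-1/2})\times[t_i-A_iQ_i^{-1},t_i]$ for some $A_i\to\infty$, where $A_iQ_i^{-1}\le t_i/2$. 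Rescale by setting $\tilde g_i(s):=Q_i\,g_i(t_i+s/Q_i)$.

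\emph{Limit and contradiction.} In the rescaled flows, the curvature is bounded by $4$ on larger and larger backward parabolic regions, and $|\widetilde{\operatorname{Rm}}|(x_i,0)=1$. Assumption \textbf{(B3)} becomes $|\tilde B(x,r)|\ge v_1r^n$ for all $r\le\sqrt{t Q_i}$, a range that tends to infinity. Together with the curvature bound, this gives a uniform injectivity radius lower bound by Cheeger--Gromov--Taylor. Hamilton's compactness theorem then produces a complete pointed limit ancient flow $(M_\infty,g_\infty(s))_{s\le0}$ with bounded curvature and $|\operatorname{Rm}|(x_\infty,0)=1$, so the limit is nonflat. Assumption \textbf{(B2)} rescales to $\tilde\ell\le \eta/(tQ_i)\le 2/(t_iQ_i)\to0$ on the relevant region, since $t\ge t_i/2$ there. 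Hence the limit has geometrically nonnegative curvature, and in particular nonnegative curvature operator. Noncollapsing passes to the limit at all scales, so $\operatorname{vol}B(x,r)\ge v_1r^n$ for every $r>0$. Thus the limit is a $\kappa$-solution with positive asymptotic volume ratio. This contradicts Perelman's theorem that nonflat $\kappa$-solutions have vanishing asymptotic volume ratio. Finally, enlarging $\underline{\mathscr K}$ gives $\underline{\mathscr K}\ge v_1^{-1}$, as required.

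The main obstacle is the point-picking step. It must stay within $(0,\tau_i]$, and it must make the rescaled backward time intervals exhaust $(-\infty,0]$ while keeping $t\ge t_i/2$. The latter is what allows \textbf{(B2)} and \textbf{(B3)} to be used uniformly on the picked region. I would follow the standard argument that replaces $(x_i,t_i)$ by a nearby earlier point whenever $|\operatorname{Rm}|>4Q_i$ in the backward region. This iteration terminates because of the a priori bound $t|\operatorname{Rm}|<\infty$ and the fact that this quantity vanishes as $t\to0$.
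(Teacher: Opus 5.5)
Your argument is correct and follows the same overall strategy as the paper. Both argue by contradiction and blow up to a nonflat ancient flow with nonnegative curvature operator that is noncollapsed at all scales, contradict Perelman's vanishing of the asymptotic volume ratio, and then continue the flow using the strict bound. The difference is the point-picking.

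You use Perelman's local iterative point-picking. It only gives $|\Rm|\le 4Q_i$ on backward parabolic neighborhoods, so you need the local version of Hamilton's compactness theorem and a termination argument for the iteration. The paper avoids all of this. Since the flow has bounded curvature on compact time intervals, $Q_i:=\sup_{M_i\times(0,\tau_i']}t|\Rm_{g_i(t)}|$ is finite. Take an approximate maximizer $(p_i,t_i)$ with $t_i|\Rm|(p_i,t_i)\ge Q_i/2$. It immediately gives $|\Rm_{g_i(t)}|\le Q_i/t\le 2Q_i/t_i$ on all of $M_i\times[t_i/2,t_i]$. After rescaling by $Q_i/t_i$, this is a global bound $|\Rm|\le 2$ on $M_i\times[-Q_i/2,0]$, with $|\Rm|(p_i,0)\ge\frac12$. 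So Hamilton's global compactness theorem applies directly, and the restriction $t\ge t_i/2$ needed to use \textbf{(B2)} and \textbf{(B3)} holds automatically.

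The step you flag as the main obstacle is therefore unnecessary here, because the scale-invariant quantity $t|\Rm|$ is globally bounded on each flow. Your route is more robust, since it would still work if only local curvature bounds were available. The cost is the iteration and the local compactness theorem.

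One small point applies to both versions. Passing from the strict bound at $t=\tau$ to \textbf{(B1)} on some $[0,\tau']$ on a noncompact $M$ needs continuity of $\sup_x t|\Rm|$ in $t$, not just pointwise continuity. This follows from Shi's derivative estimates, which give a uniform bound on $\partial_t\Rm$ for times near $\tau$.
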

	
	\begin{proof}
		We follow the point-picking and blow-up argument used in the proof of
		\cite[Theorem~1]{BamlerCabezasRivasWilking} to establish the strict version of \textup{\textbf{(B1)}}. We first claim that there exists a constant
		$\underline{\mathscr K} = \underline{\mathscr K}(n,v_1)<\infty$
		such that every flow satisfying \textup{\textbf{(B2)}} and \textup{\textbf{(B3)}} on
		$[0,\tau]$ satisfies
		\begin{equation}
		\sup_{0<t\le\tau}\sup_{x\in M}
		t|\Rm_{g(t)}(x)|
		\le  \frac{\underline{\mathscr K}}{2}.
		\label{eq:universal-curvature-bound}
		\end{equation}
		
		Fix $v_1$ and suppose by contradiction that there exists a sequence
		of complete Ricci flows
		$(M_i,g_i(t))_{t\in[0,\tau_i)}$, with curvature uniformly bounded
		on compact time intervals, satisfying \textup{\textbf{(B2)}} and
		\textup{\textbf{(B3)}} with the same constants, and points
		$(x_i,s_i)\in M_i\times(0,\tau_i)$ such that
		$
		s_i|\Rm_{g_i(s_i)}(x_i)|\longrightarrow\infty.
		$
		Choose $s_i<\tau'_i<\tau_i$ and set
		\[
		Q_i
		:=
		\sup_{0<t\leq\tau'_i}\sup_{x\in M_i}
		t|\Rm_{g_i(t)}(x)|.
		\]
		Then $Q_i<\infty$, since the curvature is bounded on compact time
		intervals, and
		\[
		Q_i
		\geq
		s_i|\Rm_{g_i(s_i)}(x_i)|
		\longrightarrow\infty.
		\]
		Take an approximate maximizer
		$(p_i,t_i)\in M_i\times(0,\tau'_i]$ such that
		$
		t_i|\Rm_{g_i(t_i)}(p_i)|
		\geq
		\frac12Q_i.
		$ Consider the rescaled Ricci flows
		\begin{equation}\label{eq:rescaled-flows}
		\widetilde g_i(s)
		:=
		\frac{Q_i}{t_i}\,
		g_i\left(
		t_i+\frac{t_i}{Q_i}s
		\right),
		\qquad
		-\frac{Q_i}{2}\leq s\leq0,
		\end{equation}
		which satisfy $|
		\operatorname{Rm}_{\widetilde g_i(0)}(p_i)
		|
		\geq
		\frac{1}{2}$ and $| \Rm_{\tilde g_i (s)} | \leq 2$ for every
		$
		s\in[-\frac{Q_i}{2},0].
		$

		Let \(\widetilde\ell_i\) denote the cosectional curvature defect of
		\(\widetilde g_i\). Since it scales in the same way as the
		curvature tensor,  \textup{\textbf{(B2)}} and $\eta\leq1$ imply
		$
		\widetilde\ell_i(x,s) \leq \frac{2}{Q_i}$, and hence
		$
		\operatorname{Rm}_{\widetilde g_i(s)}
		\geq
		-\frac{2}{Q_i}\mathscr I_{\widetilde g_i(s)}
		$ for $s \in (-\frac{Q_i}{2}, 0]$.
		
		Since the parabolic noncollapsing estimate \textup{\textbf{(B3)}} is preserved
		under the rescaling, as $Q_i \to \infty$, Hamilton's compactness theorem gives a subsequence pointed at $(p_i,0)$, which converges to a non-flat ancient solution $g_\infty(s)$ with $\Rm_{g_\infty(s)} \geq 0$ for all times and bounded curvature. Passing \textup{\textbf{(B3)}} to the limit and using volume  comparison, we conclude that the asymptotic volume ratio of $\tilde g_\infty(t)$ is positive.
		This contradicts the vanishing of the volume ratio on a $\kappa$-solution (cf.~\cite[11.4]{P1}).
		
		Accordingly, \eqref{eq:universal-curvature-bound} follows. 
		If $\mathscr K\ge\underline{\mathscr K}$, then
		$
		t |\Rm_{g(t)}|
		\le
		\frac{\mathscr K}{2} 
		\,
		\text{on }M\times(0,\tau].
		$
		The bounded-curvature continuation theorem extends $g(t)$ to
		$[0,\tau']$ for some $\tau<\tau'<\tau_0$. The strict estimate
		above ensures that \textup{\textbf{(B1)}} remains valid on this extension
		after decreasing $\tau'$ if necessary.
	\end{proof}

	\subsection{Extension of the lower curvature control (B2)}
	We next improve the pointwise lower curvature bound \textup{\textbf{(B2)}}.
	The proof estimates the Gaussian averages $\widehat\ell_{\sqrt t}$
	using the initial volume and Morrey bounds \textup{\textbf{(A1)}} and
	\textup{\textbf{(A2)}} and then applies \textup{\textbf{(B6)}}.
	
	\begin{prop}
		\label{prop:lower-curvature-improvement}
		We can find a constant $\mathscr D = \mathscr D(n, v_0, \mathscr C) < \infty$ such that
		\begin{equation} \label{eq_lem_B2_l_hat_bound}
		\widehat\ell_{\sqrt t}(x) \leq \frac{\mathscr D\sqrt{\eps}}{t}.
		\end{equation}
		If, in addition, 
		$ \eps \leq \ov\eps(n, v_{0}, \mathscr B, \mathscr C, \eta)$,
		then the Ricci flow admits an extension beyond $\tau$ on which
		\textup{\textbf{(B2)}} remains valid.
	\end{prop}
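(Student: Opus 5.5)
We bound the Gaussian average by a dyadic annular decomposition around $x$ at scale $r:=\sqrt t$, using only the initial assumptions \textbf{(A1)} and \textbf{(A2)}. Since $\tau_0\le1$, we have $r\le1$.

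\emph{Core ball.} On $B(x,r)$ the weight is at most $1$, so \textbf{(A2)} gives a contribution of at most $r^{-n}\cdot\eps r^{n-2(1+\beta)}=\eps\, t^{-(1+\beta)}$.

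\emph{Annuli inside unit scale.} For $k\ge1$ with $2^k r\le1$, consider the annulus $A_k:=B(x,2^kr)\setminus B(x,2^{k-1}r)$. There the weight is at most $\exp(-4^{k-1}/\mathscr C)$, and \textbf{(A2)} applied to $B(x,2^kr)$ bounds the integral of $\ell^{1+\beta}$ by $\eps(2^kr)^{n-2(1+\beta)}$. The resulting contribution is
\[
\eps\, t^{-(1+\beta)}\,2^{k(n-2-2\beta)}\exp(-4^{k-1}/\mathscr C),
\]
which is summable in $k$ with a sum depending only on $n$ and $\mathscr C$.

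\emph{Annuli beyond unit scale.} When $2^kr>1$, \textbf{(A2)} is no longer available on the whole ball, so we cover $B(x,2^kr)$ by unit balls. By the covering argument behind \eqref{eq:global-volume-growth}, using \textbf{(A1)} for both lower and upper volume bounds, the number of such balls is at most $C(n,v_0)(1+2^kr)^ne^{C2^kr}$. On each unit ball the integral is at most $\eps$. Multiplying by the weight $\exp(-4^{k-1}/\mathscr C)$, the Gaussian beats the exponential volume growth, since $4^{k-1}\ge (2^kr)^2/4$ and $r\le1$. Because $r^{-n}\le t^{-(1+\beta)}\cdot r^{-n+2+2\beta}$, these terms are controlled by $\eps\, t^{-(1+\beta)}$ times a factor that is summable after a short computation. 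Here we use $r\le1$ and that $(2^kr)^{n}$ is absorbed by the Gaussian decay in $2^kr$.

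\emph{Conclusion of the estimate.} Summing the three contributions gives
\[
\widehat\ell^{1+\beta}_{\sqrt t}(x)\le \mathscr D'\eps\, t^{-(1+\beta)}
\]
with $\mathscr D'=\mathscr D'(n,v_0,\mathscr C)$. Taking the $(1+\beta)$-th root yields
\[
\widehat\ell_{\sqrt t}(x)\le \mathscr D'^{1/(1+\beta)}\eps^{1/(1+\beta)}/t.
\]
Since $\eps\le1$ and $1/(1+\beta)>2/3>1/2$, we get $\eps^{1/(1+\beta)}\le\sqrt\eps$, and since $\mathscr D'$ can be taken $\ge1$, we have $\mathscr D'^{1/(1+\beta)}\le \mathscr D'=:\mathscr D$. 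This proves \eqref{eq_lem_B2_l_hat_bound}.

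\emph{Improving \textbf{(B2)}.} By \textbf{(B6)}, $\ell(x,t)\le \mathscr B\mathscr D\sqrt\eps/t$ on $(0,\tau]$. Choosing $\ov\eps$ so that $\mathscr B\mathscr D\sqrt{\ov\eps}\le\eta/2$ gives the strict bound $\ell\le\eta/(2t)$.

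\emph{Extension beyond $\tau$.} By Proposition~\ref{prop:upper-curvature-improvement}, the flow extends to $[0,\tau']$ with bounded curvature. Near $t=\tau>0$ the curvature is smooth and bounded, so $t\,\ell(x,t)$ is uniformly continuous in $t$ uniformly in $x$; this follows from the bounded curvature and its derivative bounds via Shi's estimates, and from $\ell$ being globally Lipschitz in $\Rm$. Hence, after decreasing $\tau'$, we still have $t\,\ell<\eta$, which gives \textbf{(B2)} on the extension.

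The main obstacle is the tail summation beyond unit scale, where \textbf{(A2)} is only available for $r\le1$. The point to verify carefully is that the exponential volume growth of \eqref{eq:global-volume-growth} is dominated by the Gaussian weight uniformly in $t\le1$. If that bookkeeping becomes awkward, the fallback is to integrate the weight against a unit-ball partition directly: bound $\int_M e^{-d^2/(\mathscr C r^2)}\ell^{1+\beta}$ by $\sum_j \eps\, e^{-(d(x,z_j)-1)_+^2/(\mathscr C r^2)}$ over a maximal $1$-separated net $\{z_j\}$ and sum using \eqref{eq:global-volume-growth}.
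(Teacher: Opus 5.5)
Your proof is correct, but the decomposition differs from the paper's.

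\textbf{The paper's route.} It uses linear annuli $B(x,(k+1)\rho)\setminus B(x,k\rho)$ with $\rho=\sqrt t$. It covers each annulus by $N_k\le C(k+2)^ne^{C(k+2)\rho}$ balls of radius $\rho/2$, counted via \textbf{(A1)} and \eqref{eq:global-volume-growth}. It applies \textbf{(A2)} only at the single scale $\rho/2$. The normalization $\rho^{-n}$ then cancels exactly against $(\rho/2)^{n-2(1+\beta)}$, so near and far regions are treated uniformly, with no case split at unit scale.

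\textbf{Your route.} You use the multiscale content of \textbf{(A2)}. For $2^kr\le1$ you apply it directly to $B(x,2^kr)$, which needs no covering at all. Only the far region is covered, and at unit scale.

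\textbf{The cost in the far region.} There the unit-ball bound $\eps$ times $r^{-n}$ leaves a factor $r^{-(n-2-2\beta)}$ relative to $\eps\,t^{-(1+\beta)}$. Your stated justification, Gaussian decay in $2^kr$, does not absorb it: that only gives an $r$-independent sum, still multiplied by $r^{-(n-2-2\beta)}$, which blows up as $r\to0$ when $n\geq3$. The factor is absorbed for a different reason. The condition $2^kr>1$ forces $r^{-(n-2-2\beta)}\le 2^{kn}$. The weight $\exp(-4^{k-1}/\mathscr C)$ decays superexponentially in $2^k$ itself, not merely in $2^kr$. It therefore dominates $2^{kn}(1+2^k)^ne^{C2^k}$. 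With this observation the tail sum is bounded by a constant depending only on $(n,v_0,\mathscr C)$, as required.

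\textbf{Your fallback fails as written.} Net points with $d(x,z_j)\le1$ carry weight $1$ and contribute $\eps r^{-n}$. This is much larger than $\eps\,t^{-(1+\beta)}$ for $n\geq3$. Covering at the parabolic scale $\sqrt t$, as the paper does, avoids exactly this loss.

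\textbf{Extension step.} Your continuity argument is valid: bounded curvature on $[0,\tau']$, Shi's estimates, and the Lipschitz dependence of $\ell$ on $\Rm$ give uniform-in-$x$ continuity of $t\,\ell$ near $\tau$. This passes from $\ell\le\eta/(2t)$ on $(0,\tau]$ to \textbf{(B2)} on a slightly longer interval, a step the paper leaves implicit.
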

	
	\begin{proof}	
		Set $\rho:= \sqrt{t}$. If we cover $M\setminus\{x\}$ by the annuli $A_k^\rho = B(x, (k+1)\rho) \setminus B(x, k \rho)$ with $k=0,1,\ldots$, we get from the definition of $\widehat \ell_\rho$ that
		\[
		\begin{aligned}
		\widehat \ell_\rho^{1+\beta}(x)
		= \rho^{-n} \sum_{k=0}^{\infty} \int_{A_k^\rho}  \exp\bigg({-\frac{d^2(x, \cdot)}{\mathscr C t}}\bigg) \ell^{1 + \beta}(\cdot, 0)\, d\mu.
		\end{aligned}
		\]

		For each $k$, choose a maximal $\frac{\rho}{2}$-separated set
		$
		\{y_1,\ldots,y_{N_k}\}\subset A_k^\rho.
		$
		Then the balls $B(y_i,\frac{\rho}{2})$ cover $A_k^\rho$, while the balls
		$B(y_i,\frac{\rho}{4})$ are pairwise disjoint. Since
		$y_i\in B(x,(k+1)\rho)$, these disjoint balls are contained in
		$B(x,(k+2)\rho)$. Hence, by \textup{\textbf{(A1)}} and the volume-growth estimate
		in Remark~\ref{rem:initial-assumptions}(a),
		\[
		\begin{aligned}
		N_k v_0\left(\frac{\rho}{4}\right)^n
		&\le
		\sum_{i=1}^{N_k}
		\bigl|B(y_i,\frac{\rho}{4})\bigr|
		\le
		\bigl|B(x,(k+2)\rho)\bigr|
		\le
		C(k+2)^n\rho^n
		e^{C (k + 2) \rho}.
		\end{aligned}
		\]

		As $\cup_{i = 1}^{N_k} B(y_i, \frac{\rho}{2})$ covers the annulus $A^\rho_k$ and $d(x, \cdot)|_{A_k^\rho} \geq k \rho$ we deduce
		\[\widehat \ell_\rho^{1+\beta}(x) \leq \rho^{-n} \sum_{k=0}^\infty e^{-\frac{k^2}{\mathscr C}} \sum_{i = 1}^{N_k}\int_{B(y_i, \frac{\rho}{2})}   \ell^{1 + \beta}(\cdot, 0)\, d\mu \leq \frac{C \varepsilon}{t^{1 + \beta}} \sum_{k=0}^\infty N_k \, e^{-\frac{k^2}{\mathscr C}}, \]
		where we have applied assumption \textup{\textbf{(A2)}}. From the upper bound for $N_k$ computed above we get
		\[\widehat \ell_\rho^{1+\beta}(x) \leq \frac{\eps}{t^{1 + \beta}} \, \widetilde  C(v_0) \sum_{k = 0}^\infty (k+2)^n \exp\bigg(C (k+2) - \frac{k^2}{\mathscr C}\bigg) \leq C(n, v_0, \mathscr C)\frac{\eps}{t^{1 + \beta}}.\]
		As
		$
		0<\beta<1
		\,\text{and}\,
		0<\varepsilon\leq1
		$, this finally gives $\widehat \ell_{\sqrt t}(x) \leq \mathscr D  \frac{\eps^{\frac{1}{1+\beta}}}{t} \leq \mathscr D  \frac{\sqrt{\eps}}{t}$
		for $\mathscr D= \mathscr D(n, v_{0}, \mathscr C)$. The conclusion then follows from \eqref{eq_lem_B2_l_hat_bound} using a priori assumption \textup{\textbf{(B6)}} for $\eps \leq  \left(\frac{\eta}{2\mathscr B \mathscr D}\right)^2 =:\ov\eps(n, v_0, \mathscr B, \mathscr C, \eta)$.
	\end{proof}

	\subsection{Extension of the parabolic noncollapsing (B3)} We next improve the parabolic noncollapsing estimate \textup{\textbf{(B3)}}. The
	proof is a packing and compactness argument. The initial volume bounds
	provide many separated points, the distance estimates preserve their
	separation for a short time, and Alexandrov compactness and volume
	convergence give a lower volume bound at the parabolic scale.

	\begin{prop}
		\label{prop:parabolic-noncollapsing-improvement}
		If $\eta \leq \ov\eta (n, v_0, D, \mathscr K)$ and $v_1 \leq \ov v_1 (n, v_0, D)$ and $\tau_0 \leq \ov\tau(n, v_0, D, \mathscr K)$, then the Ricci flow admits an extension beyond $\tau$ on which \textup{\textbf{(B3)}} remains valid.
	\end{prop}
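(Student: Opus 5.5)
We need a strict improvement of \textbf{(B3)}: we aim to show $|B_t(x,r)|_t\geq 2v_1 r^n$ for $0<r\leq\sqrt t$, $t\leq\tau$, with $2v_1=\overline v_1(n,v_0,D)$. Continuity of the flow then extends the non-strict bound to some $[0,\tau']$. The plan is a packing argument at scale $\sqrt t$, followed by a compactness step that passes from scale $\sqrt t$ down to all $r\leq\sqrt t$.

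\emph{Step 1: separated points survive.} Fix $x$, $t$, put $\rho:=\sqrt t$, and choose a small $\lambda=\lambda(D)\in(0,1)$. By \textbf{(A1)} we can select a maximal $\lambda\rho$-separated set in $B(x,\rho)$ (for $g(0)$) with $N\geq c(n,v_0)\lambda^{-n}$ points. By \textbf{(B4)} with $r=\rho$, every such point lies in $B_t(x,2D\rho)$. To keep separation we need a lower bound for $d_t$. Here \textbf{(B1)}, \textbf{(B2)} and \eqref{eq:Ricci-lower-from-defect} give $\Ric_{g(s)}\leq (n-1)\mathscr K/s$, so the standard Hamilton--Perelman distance-shrinking estimate yields $d_t(y,z)\geq d_0(y,z)-C(n)\sqrt{\mathscr K t}$. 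This shrinkage is of order $\sqrt t$ and is not small compared to $\lambda\rho$, so I would work instead at the smaller scale $r_0:=\sqrt{\tau_0}$-independent scale $\rho'=\Lambda\sqrt{t}$. Then the packing contains points at mutual $d_0$-distance $\geq\lambda\Lambda\sqrt t\gg C\sqrt{\mathscr K t}$. For this we need \textbf{(B4)} at radius $\Lambda\sqrt t\leq1$, which costs $\tau_0\leq\overline\tau(n,v_0,D,\mathscr K)$.

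\emph{Step 2: many disjoint balls at time $t$.} Step 1 gives $N\gtrsim\lambda^{-n}$ points in $B_t(x,2D\Lambda\sqrt t)$ that are $\tfrac12\lambda\Lambda\sqrt t$-separated in $d_t$. This is a lower bound on the packing number of a ball of radius $R=2D\Lambda\sqrt t$ at scale $\sim R/D$. To convert packing into volume we use the geometry of $g(t)$ on scale $\sqrt t$: by \textbf{(B1)} and \textbf{(B2)}, the rescaled metric $t^{-1}g(t)$ has $|\Rm|\leq\mathscr K$ and $\Rm\geq-\eta$ (in the cone sense, hence sectional curvature $\geq-\eta$). With $\eta\leq\overline\eta$ tiny, the ball of radius $R/\sqrt t=2D\Lambda$ is, by Bishop--Gromov and Toponogov comparison with curvature $\geq-\eta$, almost nonnegatively curved on that fixed scale. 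I would argue by contradiction. Suppose $|B_t(x,\sqrt t)|_t\leq\delta t^{n/2}$ along a sequence with $\eta\to0$ and $\delta\to0$. Then the rescaled balls $B(x,2D\Lambda)$ converge, by Gromov compactness and Alexandrov limits with curvature $\geq0$, to a collapsed limit of dimension $<n$. By Bishop--Gromov in the limit, the packing number at scale $\sim\Lambda$ in a ball of radius $2D\Lambda$ is then bounded by $C(D)\lambda^{-(n-1)}$-type counts. This contradicts $N\gtrsim\lambda^{-n}$ once $\lambda$ is small in terms of $D$. Volume convergence (Burago--Gromov--Perelman) then gives $|B_t(x,\sqrt t)|_t\geq c(n,v_0,D)t^{n/2}$.

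\emph{Step 3: smaller radii.} For $r\leq\sqrt t$, relative volume comparison on the rescaled metric with $\Ric\geq-(n-1)\eta$ gives $|B_t(x,r)|_t\geq c'(r/\sqrt t)^n|B_t(x,\sqrt t)|_t$. Taking $\overline v_1=c\,c'$ yields the strict improvement $2v_1$.

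The main obstacle is Step 1 together with the scale bookkeeping. The distance-shrinking estimate loses $O(\sqrt{\mathscr K t})$, which competes with the separation scale, and this forces working at scale $\Lambda\sqrt t$ with $\Lambda=\Lambda(\mathscr K)$. In Step 2 the contradiction then has to be run on balls of radius comparable to $D\Lambda$ in the rescaled metric. I would first try a plain counting argument: if the noncollapsing were lost, the $N$ disjoint $d_t$-balls of radius $\sim\lambda\Lambda\sqrt t/4$, each of comparable volume by Bishop--Gromov, would have to fit in a region whose volume is controlled by doubling. Only if this fails would I resort to the compactness argument.
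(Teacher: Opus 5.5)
Your strategy is the paper's. You take a separated set from \textbf{(A1)} at the enlarged scale $\Lambda\sqrt t$ with $\Lambda\gtrsim\lambda^{-1}\sqrt{\mathscr K}$, so the contraction $C\sqrt{\mathscr K t}$ from \textbf{(B1)} is harmless. Then come containment in $B_t(x,2D\Lambda\sqrt t)$ by \textbf{(B4)}, almost nonnegative curvature at that scale from \textbf{(B2)} with $\eta\le\overline\eta(\Lambda)$, a compactness argument in which the count $c(n,v_0)\lambda^{-n}$ forces an $n$-dimensional Alexandrov limit, volume convergence, and Bishop--Gromov for smaller radii.

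\emph{The gap: the constants.} The statement needs $\overline v_1$ to depend only on $(n,v_0,D)$. This is essential, because $\mathscr K\ge\underline{\mathscr K}(n,v_1)$ is fixed after $v_1$ in \eqref{order-constants}. You run the contradiction on the unit ball of $t^{-1}g(t)$. The packing, however, lives in balls of radius $2D\Lambda$ at separation $\tfrac12\lambda\Lambda$, and $\Lambda$ depends on $\mathscr K$.
\begin{itemize}
\item If $\Lambda$ is fixed along the sequence, compactness only gives $c(n,v_0,D,\mathscr K)$.
\item If $\mathscr K_i$, hence $\Lambda_i$, tends to infinity, the separated points escape to infinity in the pointed limit at unit scale, so the limit no longer sees the packing.
\end{itemize}
So your claim $c(n,v_0,D)$ is not justified as written.

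\emph{The fix: the paper's ordering.} Rescale by $\rho^{-2}$ with $\rho=\Lambda\sqrt t$. In $h=\rho^{-2}g(t)$ the points are $\tfrac12\lambda$-separated in $B_h(x,2D)$, and the sectional curvature is at least $-\eta\Lambda^2\ge-1$. The contradiction argument now involves only $n,v_0,D,\lambda$. It yields $|B_h(x,1)|>4v_1$ with $v_1=v_1(n,v_0,D)$.

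Only afterwards descend to every $r\le\rho$, including $r\le\sqrt t$, by Bishop--Gromov with $\Ric\ge-(n-1)\eta/t$. Since $\eta\Lambda^2\ll1$, the volume ratio drops by at most a factor $\tfrac12$, independently of $\Lambda$. So your Step~3 should start at scale $\Lambda\sqrt t$, not $\sqrt t$. Equivalently, Bishop--Gromov shows that collapse at scale $\sqrt t$ forces collapse at scale $\Lambda\sqrt t$, which makes the compactness argument $\Lambda$-independent.

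\emph{Two smaller points.} First, $\lambda$ must depend on $(n,v_0,D)$, not just $D$, since you compare $c(n,v_0)\lambda^{-n}$ with $C(n)(D/\lambda)^{n-1}$. Second, your ``plain counting'' fallback cannot work. The $n$-dimensional Bishop--Gromov inequality bounds the packing number only by $C(n)(D/\lambda)^n$, the same order as your lower bound. Collapse is therefore detected only through the dimension drop of the limit.
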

	
	\begin{proof}
		Let $\delta > 0$ and $\Lambda > 1$ be constants whose values we will choose later and assume that $\tau_0 \leq \Lambda^{-2}$.
		Fix $(x,t) \in M \times (0, \tau]$ and set $\rho := \Lambda \sqrt t \leq 1$.
		If $\eta \leq \ov\eta (n, \Lambda)$, then \textup{\textbf{(B2)}} combined with volume comparison gives $|B_t(x,r)|_t r^{-n} \geq \frac12 |B_t(x, \rho)|_t \rho^{-n}$.
		So it is enough to show that 
		\[ |B_t(x, \rho)|_t > 4 v_1 \rho^n. \]

		At $t = 0$, a packing estimate using \textup{\textbf{(A1)}} reveals that there are $N \geq c(v_0) \delta^{-n}$ many points $\{ y_1, \ldots, y_N \} \subset B(x, \rho)$ with $d(y_i, y_j) \geq \delta \rho$ for $i \neq j$.
		By a priori assumption \textup{\textbf{(B1)}} and a distance distortion argument (see \cite[Theorem 17.2]{HamFS} and Editor's note 24 in \cite{Coll}) this yields
		\begin{equation} \label{net_timet}
		d_t (y_i, y_j) \geq d(y_i, y_j) - C\sqrt{\mathscr K t} 
		\geq \delta \rho  - C\sqrt{\mathscr K} \,\frac{\rho}{\Lambda} \geq \frac12 \delta \rho,
		\end{equation}
		as long as $\Lambda \geq 2C \delta^{-1} \sqrt{\mathscr K}$.
		By \textup{\textbf{(B4)}}, we moreover have $\{ y_1, \ldots, y_N \} \subset B_t (x, 2D \rho)$.
		
		\medskip

		Consider the rescaled metric $h := \rho^{-2} g(t)$.
		Then $\{ y_1, \ldots, y_N \}$ forms a $\frac12\delta$-separated subset of $B_h(x, 2D)$ within $(M,h)$, and hence the $\frac12 \delta$-packing number of $B_h (x, 2D)$ is at least $c(v_0) \delta^{-n}$.
		Moreover, if $\eta \leq \overline{\eta} (\Lambda)$, then \textup{\textbf{(B2)}} implies that the sectional curvature of $h$ is bigger or equal than $-1$.
		We now claim that if $v_1 \leq \overline v_1(n, v_0,D)$ and  $\delta \leq \ov\delta (n,v_0,D)$, then this implies $|B_{h}(x,1)| > 4 v_1$.
		If not, then we can find a sequence of pointed Riemannian manifolds $(M_i,h_i,x_i)$ with the same packing estimate (for a $\delta$, which we will choose soon), such that 
		\begin{equation} \label{ball-collapse}
		|B_{h_i}(x_i,1)| \to 0.
		\end{equation}

		By Gromov's compactness theorem, after passing to a subsequence, we may assume that $(M_i,h_i,x_i) \to (X,d_X,x_\infty)$ in the pointed Gromov--Hausdorff topology, where \(X\) is an Alexandrov space with curvature bounded below by \(-1\); see \cite{BGP}. 
		Taking the packing estimate to the limit implies that the $\frac14\delta$ packing number of $B_X(x_\infty, 4D)$ is at least $c(v_0) \delta^{-n}$.
		For any $\delta \leq \ov\delta(n,v_0,D)$ sufficiently small,  this implies by \cite[Theorem 15.13 and Corollary 15.16]{AlexanderKapovitchPetrunin} that $(X,d_X)$ has dimension $n$.
		Then the noncollapsed volume convergence theorem (cf. \cite[Theorem~5.9]{ChCol}) therefore contradicts \eqref{ball-collapse}.
		
		In summary, we have shown the proposition if $\delta \leq \ov\delta(n, v_0, D)$, $v_1 \leq \ov v_1 (n, v_0, D)$, $\Lambda \geq \underline{\Lambda}(\delta, \mathscr K)$ and $\eta \leq \ov\eta(\Lambda)$ and $\tau_0 \leq \Lambda^{-2}$.
		This finishes the proof.
	\end{proof}

	\subsection{Extension of the upper distance bound (B4)}
	We next improve the upper distance bound \textup{\textbf{(B4)}}. The
	exponential integrability estimate of Lemma~\ref{lem:exponential-integrability}
	and the generalized segment inequality provide a curve with controlled length growth. Short
	endpoint corrections and \textup{\textbf{(B2)}} then yield the desired
	distance bound.

	\begin{prop}
		\label{prop:upper-distance-improvement}
		If $\alpha \leq 1/32$ and $\eta \leq \overline\eta (n, \beta, v_0, D, \Upsilon, \mathscr A)$, then the Ricci flow admits an extension beyond $\tau$ on which \textup{\textbf{(B4)}} remains valid.
	\end{prop}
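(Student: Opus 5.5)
We prove a strict version of \textbf{(B4)} on $[0,\tau]$: for $d(x,y)\le r\le1$ and $t\le\min\{r^2,\tau\}$ we aim for $d_t(x,y)\le \tfrac32 Dr$. Continuity of $d_t$ in $t$ (the flow has bounded curvature on $[0,\tau']$) then extends \textbf{(B4)} to a slightly larger interval. The basic tool is the length evolution along Ricci flow. For a fixed curve $\gamma$, $\partial_t L_t(\gamma)=-\int_\gamma \Ric(\dot\gamma,\dot\gamma)/|\dot\gamma|_t$. Combining this with \eqref{eq:Ricci-lower-from-defect} gives
\[
L_t(\gamma)\le L_0(\gamma)\,\sup_{s\in[0,1]}\exp\Big((n-1)\int_0^t \ell(\gamma(s),t')\,dt'\Big).
\]
This pointwise-in-$s$ bound is too crude. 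Instead we parametrize by $s$ and use $|\dot\gamma(s)|_t\le |\dot\gamma(s)|_0\exp\big((n-1)\int_0^t\ell(\gamma(s),t')dt'\big)$. Since $\gamma$ has constant speed $L_0(\gamma)$ at time $0$,
\[
L_t(\gamma)\le L_0(\gamma)\int_0^1 \exp\Big((n-1)\int_0^t\ell(\gamma(s),t')\,dt'\Big)\,ds .
\]

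\textbf{Step 1 (averaging over the curve family).} Set $r':=d(x,y)\le r$ (if $r'=0$ there is nothing to show; we may also replace $r$ by $r'$ where harmless, since the relevant times satisfy $t\le r^2$). Apply \textbf{(A3)} to $x,y$ to obtain $\Phi_{x,y}$. Introduce the probability space $\Omega:=\Omega_{x,y}\times[0,1]$ with $\mu=\nu_{x,y}\otimes\mathcal L^1$ and the function $f(\omega,s;t'):=(n-1)\ell(\Phi_{x,y}(\omega,s),t')$. By \textbf{(B2)} we have $f\le (n-1)\eta/t'$. For the integral hypothesis of Lemma~\ref{lem:exponential-integrability} at scale $r$, we use \eqref{eq:A3-integral} with $\ell^{1+\beta}(\cdot,t')\chi_{B(x,(D+2)r)}$, noting that all curves lie in $B(x,(D+1)r)$. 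This gives
\[
\int_0^{r^2}\!\!\int_\Omega f^{1+\beta}\le C\Upsilon r^{-n}\int_0^{\min\{r^2,\tau\}}\!\!\int_{B(x,(D+2)r)}\ell^{1+\beta}\,d\mu\,dt'.
\]
We cover $B(x,(D+2)r)$ by $C(n,v_0,D)$ balls of radius $r$ via \textbf{(A1)} and apply \textbf{(B5)}; this bounds the right side by $A r^{-2\beta}$ with $A=C(n,v_0,D)\Upsilon\mathscr A$. (If $r$ exceeds $1/(D+2)$ we instead use smaller balls, at the cost of a harmless factor.) We set $\ell=0$ for $t'>\tau$. Lemma~\ref{lem:exponential-integrability} with $\delta=1/100$ then yields $\int_\Omega \exp(\int_0^{t}f)\le 1.01$, provided $\eta\le\overline\eta(n,\beta,v_0,D,\Upsilon,\mathscr A)$ (absorbing the factor $n-1$). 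Hence the $\nu_{x,y}$-average of $L_t(\gamma^\omega_{x,y})$ is at most $1.01\,Dr'$, so some $\omega$ gives a curve from $x'\in B(x,\alpha r')$ to $y'\in B(y,\alpha r')$ with $L_t\le 1.01Dr'$.

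\textbf{Step 2 (endpoint corrections).} It remains to bound $d_t(x,x')$ with $d(x,x')<\alpha r'$. This is the main obstacle, because \textbf{(B4)} at the smaller scale only gives $d_t(x,x')\le 2D\alpha r'$ when $t\le \alpha^2r'^2$, whereas $t$ may be as large as $r^2$. For $t\le \alpha^2 r'^2$ we apply \textbf{(B4)} at scale $\alpha r'$ (valid as an a priori assumption) and get $d_t(x,x')\le 2D\alpha r'$. For larger $t$ we first bound $d_{\alpha^2r'^2}(x,x')\le 2D\alpha r'$. We then use the standard distance-distortion estimate from \textbf{(B1)} and \textbf{(B2)}: on $[\alpha^2r'^2,t]$ the lower Ricci bound $\Ric\ge-(n-1)\eta/t'$ together with the Hamilton--Perelman bound $\partial_t d_t\ge -C\sqrt{\mathscr K/t'}$ is the wrong direction for an upper bound. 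For the upper bound we use $\Ric\ge -(n-1)\eta/t'$ directly, which gives $d_t\le d_{s}\,(t/s)^{(n-1)\eta}$. With $s=\alpha^2r'^2$ and $t\le r^2$, this factor is $\le \alpha^{-2(n-1)\eta}\le 2$ once $\eta\le\overline\eta(n,\alpha)=\overline\eta(n,D)$. Hence $d_t(x,x')\le 4D\alpha r'\le Dr'/8$ since $\alpha\le1/32$, and the same holds for $y,y'$.

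\textbf{Conclusion.} Combining the steps, $d_t(x,y)\le 1.01Dr'+Dr'/4<\tfrac32 Dr$. This strictly improves \textbf{(B4)} on $[0,\tau]$. By continuity of $t\mapsto d_t(x,y)$, locally uniform thanks to \textbf{(B1)} on $[\tau,\tau']$, the bound $\le 2Dr$ persists on a slightly larger interval; to make this uniform in $x,y$ we use $d_{t}\le d_\tau e^{C\mathscr K(t-\tau)/\tau}$. All smallness requirements on $\eta$ depend only on $(n,\beta,v_0,D,\Upsilon,\mathscr A)$, consistent with \eqref{order-constants}.
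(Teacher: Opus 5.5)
Your argument is correct only when $r=r':=d(x,y)$, that is, for $t\le\min\{d(x,y)^2,\tau\}$. The remark that replacing $r$ by $r'$ is ``harmless'' is exactly where it breaks. \textbf{(B4)} must hold for every $r\in[d(x,y),1]$ and all $t\le\min\{r^2,\tau\}$, and when $r'\ll r$ two of your estimates fail.

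In Step~1 you apply the generalized segment inequality to the pair $x,y$, so its scale is forced to be $r'$. The occupation measure is therefore bounded by $\Upsilon r'^{-n}\mu$, not by $\Upsilon r^{-n}\mu$ as you wrote. Two ways of using this both fail:
\begin{itemize}
\item If you combine this density with \textbf{(B5)} at scale $r$ to reach times up to $r^2$, the constant $A$ in Lemma~\ref{lem:exponential-integrability} grows like $\Upsilon\mathscr A(r/r')^n$. It is not uniform in $x,y,r$, so $\eta$ cannot be fixed.
\item If you use \textbf{(B5)} at scale $r'$ instead, you only control times up to $r'^2$. On $(r'^2,t]$, \textbf{(B2)} gives only the factor $(t/r'^2)^{(n-1)\eta}$, which is unbounded.
\end{itemize}
The same problem appears in Step~2. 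Once $t>r'^2$, the factor $(t/(\alpha^2r'^2))^{(n-1)\eta}$ is bounded only by $(r/(\alpha r'))^{2(n-1)\eta}$, not by $\alpha^{-2(n-1)\eta}$. So your bounds $L_t(\gamma)\le 1.01\,Dr'$ and $d_t(x,y)\le 1.26\,Dr'$ are not justified for $d(x,y)^2<t\le\min\{r^2,\tau\}$. Your continuation step at time $\tau$ inherits this gap, since it needs the strict bound at $\tau$ for pairs with $d(x,y)^2<\tau<r^2$.

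The missing ingredient is the last step of the paper's proof. Set $s:=d(x,y)$. First prove $d_t(x,y)\le\frac32Ds$ for $t\le\min\{s^2,\tau\}$, which is what your argument actually gives. Then, for $s^2<t\le\min\{r^2,\tau\}$, integrate the length inequality coming from \eqref{eq:Ricci-lower-from-defect} and \textbf{(B2)} from $s^2$ to $t$:
\[
d_t(x,y)\le\Bigl(\frac{t}{s^2}\Bigr)^{(n-1)\eta}d_{s^2}(x,y)\le\frac32\,Ds\Bigl(\frac{r}{s}\Bigr)^{2(n-1)\eta}\le\frac32\,Dr .
\]
This uses the extra smallness condition $2(n-1)\eta\le1$, which makes the growth factor sublinear in $r/s$.

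With this step added, the rest of your proof is a correct variant of the paper's, differing in two places:
\begin{itemize}
\item You apply Lemma~\ref{lem:exponential-integrability} directly on $\Omega_{x,y}\times[0,1]$, using \eqref{eq:A3-integral} only to verify its integral hypothesis. The paper applies it on the normalized ball $U$ and then transfers the excess $h=e^{(\cdots)}-1$ through the segment inequality.
\item You bound the curve length at time $t$ itself and make the endpoint corrections at time $\alpha^2s^2$. The paper works entirely at $t_0=\alpha^2t$ and then propagates $d_{t_0}(x,y)$ forward to time $t$.
\end{itemize}
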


	\begin{proof}
		Fix $x,y\in M$ and set
		$
		s:=d(x,y).
		$ 
		There is nothing to prove if $s=0$, so assume that $0<s\le1$. We
		first consider
		$
		0<t\le\min\{s^2,\tau\}.
		$
		Let $(\Omega_{x,y},\mathcal F_{x,y},\nu_{x,y})$ and
		$
		\Phi_{x,y}\colon\Omega_{x,y}\times[0,1]\rightarrow M
		$ be supplied by Definition~\ref{segment} at the scale
		$s$, and write
		$\gamma_\omega:=\Phi_{x,y}(\omega,\cdot)$.

		\smallskip 
		Set
		$
		t_0:=\alpha^2t \leq(\alpha s)^2 \leq s^2.
		$
		We first control the increase in length of the curves
		\(\gamma_\omega\) between times \(0\) and \(t_0\).
		A basic distance distortion estimate gives
		\begin{equation}\label{eq:length-ratio-controlled-curve}
		\frac{L_{t_0}(\gamma_\omega)}
		{L(\gamma_\omega)}
		\leq
		\int_0^1
		\exp
		\left(
		(n-1)\int_0^{t_0}
		\ell(\gamma_\omega(q),\sigma)\,d\sigma
		\right)dq.
		\end{equation}
		In fact, using \eqref{eq:Ricci-lower-from-defect}, we obtain
		$
		\frac{d}{dt}
		\log|\dot\gamma|_{g(t)}
		\leq
		(n-1)\ell(\gamma(\cdot),t).
		$
		If \(\gamma\) has constant \(g(0)\)-speed, integration from \(0\) to \(t_0\), followed by integration along
		\(\gamma\), proves the above bound.

		Notice that every point of \(\gamma_\omega\) has \(g(0)\)-distance at
		most
		$
		L(\gamma_\omega)\leq D s
		$
		from \(\gamma_\omega(0)\), while
		$
		d\bigl(x,\gamma_\omega(0)\bigr)<\alpha s.
		$
		Thus every curve \(\gamma_\omega\) is contained in
		\begin{equation}\label{eq:controlled-curves-ball}
		B(x,(D+\alpha)s)
		\subseteq
		B(x,(D+1)s) =: U.
		\end{equation}

		In order to apply Lemma \ref{lem:exponential-integrability} on \(U\), let
		$
		d\overline\mu
		:=
		\frac{1}{|U|}\,d\mu|_U
		$
		so that \((U,\overline\mu)\) is a probability space. By the covering consequence of \textup{\textbf{(A1)}} recorded in
		Remark~\ref{rem:initial-assumptions}\textup{(a)}, there exists
		$
		N=N(n,v_0,D)<\infty
		$
		such that \(U\) can be covered by at most \(N\) balls of radius
		\(s\). Applying \textup{\textbf{(B5)}} to these balls and using
		\(t_0\leq s^2\), we obtain
		\begin{equation}\label{eq:normalized-enlarged-ball-integral}
		\int_0^{t_0}
		\int_U
		\ell^{1+\beta}(\cdot,\sigma)\,d\overline{\mu} \,d\sigma
		\leq
		\frac{N \mathscr A}{v_0} \, s^{-2\beta},
		\end{equation}
		where $|U|\geq v_0s^n$ by \textup{\textbf{(A1)}}.

		Extend \(\ell|_{U\times(0,t_0]}\) by zero to
		\(U\times(t_0,s^2]\). This does not change any of the integrals over
		\((0,t_0]\), and the pointwise  estimate
		\textup{\textbf{(B2)}} continues to imply
		$
		\ell(\cdot,\sigma)\leq\frac{\eta}{\sigma}
		$
		on \(U\times(0,s^2]\).
		Then
		\[
		\int_0^{s^2}
		\int_U
		\ell^{1+\beta}(\cdot,\sigma)\,d\overline\mu\,d\sigma
		\leq
		\mathscr A_*s^{-2\beta}, \qquad \text{with} \quad 
		\mathscr A_*
		:=
		\max\left\{
		n-1,\frac{N \mathscr A}{v_0}
		\right\}.
		\]
		Applying Lemma~\ref{lem:exponential-integrability} with the constant
		\(\mathscr A_*\geq n-1\), we conclude that, for any given \(\delta>0\), there exists
		$
		\eta_1
		=
		\eta_1(n,\beta,v_0,D,\mathscr A,\delta)>0
		$
		such that, if \(\eta\leq\eta_1\), then
		\begin{equation}\label{eq:spatial-exponential-average}
		\int_U
		\exp
		\left(
		(n-1)\int_0^{t_0}\ell(\cdot,\sigma)\,d\sigma
		\right)d\mu
		\leq
		(1+\delta) |U|.
		\end{equation}

		Using again Remark~\ref{rem:initial-assumptions}\textup{(a)}, the uniform
		volume-growth estimate gives
		$
		|U|
		\leq
		C_0s^n,
		$
		for some constant
		$
		C_0=C_0(n,v_0,D).
		$
		Hence
		\begin{equation}\label{eq:spatial-exponential-excess-final}
		\int_U
		h \, d\mu
		\leq
		C_0\delta s^n, \qquad \text{with} \quad h(z)
		:=
		\exp
		\left(
		(n-1)\int_0^{t_0}\ell(z,\sigma)\,d\sigma
		\right)
		-1.
		\end{equation}

		We now transfer this spatial estimate to the family of curves by 
		applying \textup{\textbf{(A3)}} to the
		nonnegative Borel function $h$. As all the curves are contained in \(U\), equations
		\eqref{eq:controlled-curves-ball} and
		\eqref{eq:spatial-exponential-excess-final} yield
		\[
		\begin{aligned}
		&\int_{\Omega_{x,y}}\int_0^1
		\left[
		\exp
		\left(
		(n-1)\int_0^{t_0}
		\ell(\gamma_\omega(q),\sigma)\,d\sigma
		\right)-1
		\right]
		dq\,d\nu_{x,y}(\omega)
		\leq
		\Upsilon s^{-n}
		\int_U h\,d\mu
		\leq
		C_0\Upsilon\delta.
		\end{aligned}
		\label{eq:curve-exponential-average}
		\]
		Since \(\nu_{x,y}\) is a probability measure, there exists
		\(\omega_0\in\Omega_{x,y}\) such that
		\[
		\int_0^1
		\exp
		\left(
		(n-1)\int_0^{t_0}
		\ell(\gamma_{\omega_0}(q),\sigma)\,d\sigma
		\right)dq
		\leq
		1+C_0\Upsilon\delta.
		\]
		Combining this with
		\eqref{eq:length-ratio-controlled-curve}, we obtain
		$
		L_{t_0}(\gamma_{\omega_0})
		\leq
		\bigl(1+C_0\Upsilon\delta\bigr)Ds.
		$
		
		Set
		$
		\bar x:=\gamma_{\omega_0}(0),
		\,
		\bar y:=\gamma_{\omega_0}(1).
		$
		By \textup{\textbf{(A3)}},
		$
		d(x,\bar x)<\alpha s,
		\,
		d_0(y,\bar y)<\alpha s.
		$
		Recalling that
		$
		t_0=\alpha^2t\leq(\alpha s)^2,
		$
		the bootstrap distance estimate \textup{\textbf{(B4)}}, applied at scale
		\(\alpha s\), gives
		\[
		\begin{aligned}
		d_{t_0}(x,y)
		&\leq
		d_{t_0}(x,\bar x)
		+
		L_{t_0}(\gamma_{\omega_0})
		+
		d_{t_0}(\bar y,y)
		\leq
		\left[
		4  \alpha
		+
		\bigl(1+C_0\Upsilon\delta\bigr)
		\right] D \, s \leq
		\frac{5}{4} D \, s.
		\end{aligned}
		\label{eq:distance-at-intermediate-time}
		\]
		The latter follows by choosing first $\alpha \leq 1/32$ 
		and then
		$
		\delta=\delta(n,v_0, D, \Upsilon)>0
		$
		with $
		C_0\Upsilon\delta
		\leq
		\frac{1}{8}.
		$

		We next pass from \(t_0\) to \(t\). 
		By \eqref{eq:Ricci-lower-from-defect} combined with \textup{\textbf{(B2)}}, for every fixed rectifiable curve \(\xi\),
		\[
		\frac{d}{d\sigma}\log L_\sigma(\xi)
		\le \frac{(n-1)\eta}{\sigma}.
		\]
		Integrating from $t_0$ to $t$ and taking the infimum over curves
		joining $x$ to $y$, we obtain
		\[
		d_t(x,y)
		\le
		\left(\frac{t}{t_0}\right)^{(n-1)\eta}
		d_{t_0}(x,y)
		\le
		\alpha^{-2(n-1)\eta}\frac54Ds
		\le
		\frac32Ds,
		\]
		after decreasing $\eta$ if necessary,  since
		$\alpha=\alpha(D)$ was fixed in \eqref{order-constants}.
		
		It remains to obtain the estimate at every scale allowed in
		\textup{\textbf{(B4)}}. If $t\le s^2$, the estimate just proved gives
		$
		d_t(x,y)\le\frac32Ds\le\frac32Dr.
		$
		If $s^2<t$, then $s^2<\tau$, and the same estimate at time $s^2$
		gives
		$
		d_{s^2}(x,y)\le\frac32Ds.
		$
		Integrating the length evolution inequality following from
		\eqref{eq:Ricci-lower-from-defect} and \textup{\textbf{(B2)}} from $s^2$ to $t$, we obtain
		\[
		\begin{aligned}
		d_t(x,y)
		&\le
		\left(\frac{t}{s^2}\right)^{(n-1)\eta}
		d_{s^2}(x,y)\le
		\frac32Ds
		\left(\frac{r}{s}\right)^{2(n-1)\eta}
		\le
		\frac32Dr,
		\end{aligned}
		\]
		where in the last inequality we have also required
		$2(n-1)\eta\le1$. 
		
	\end{proof}
	
	\subsection{Extension of the integral curvature control (B5)}
	We next improve the spacetime Morrey bound \textup{\textbf{(B5)}}. Using
	\textup{\textbf{(B6)}} and Fubini's theorem, we express the integral in terms
	of the initial data. The volume and Morrey bounds \textup{\textbf{(A1)--(A2)}}
	control the nearby contribution, while Gaussian decay controls the
	sum over distant annuli.
	
	\begin{prop} \label{B5_improve}	If
		$\mathscr A \geq \underline{\mathscr A}(n,v_0,\mathscr B,\mathscr C)$,
		then the Ricci flow admits an extension beyond $\tau$ on which
		\textup{\textbf{(B5)}} remains valid.
	\end{prop}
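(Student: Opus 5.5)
The plan is to use \textbf{(B6)} to dominate $\ell(\cdot,t)$ by a Gaussian average of the initial defect, and then swap the order of integration. That reduces \textbf{(B5)} to a weighted integral of $\ell^{1+\beta}(\cdot,0)$, which \textbf{(A1)--(A2)} control on annuli. The goal is the strict bound
\[
\int_0^{\min\{r^2,\tau\}}\int_{B(x,r)}\ell^{1+\beta}(\cdot,t)\,d\mu\,dt\le C(n,v_0,\mathscr B,\mathscr C)\,\eps\, r^{n-2\beta}\le \tfrac12\mathscr A r^{n-2\beta},
\]
which holds once $\mathscr A\ge\underline{\mathscr A}:=2C(n,v_0,\mathscr B,\mathscr C)$, using $\eps\le1$.

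\textbf{Step 1 (Fubini).} By \textbf{(B6)}, $\ell^{1+\beta}(z,t)\le \mathscr B^{1+\beta}\,\widehat\ell^{1+\beta}_{\sqrt t}(z)$. Hence
\[
\int_{B(x,r)}\ell^{1+\beta}(\cdot,t)\,d\mu\le \mathscr B^{1+\beta}\int_M K_t(w)\,\ell^{1+\beta}(w,0)\,d\mu(w),
\]
where $K_t(w):=t^{-n/2}\int_{B(x,r)}e^{-d^2(z,w)/(\mathscr C t)}\,d\mu(z)$. Two bounds on $K_t$ are needed.

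First, $K_t\le C(n,v_0,\mathscr C)$ uniformly. This follows from the annulus/packing argument of Proposition~\ref{prop:lower-curvature-improvement}, applied with $\rho=\sqrt t\le1$ and using \textbf{(A1)} together with Remark~\ref{rem:initial-assumptions}(a).

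Second, if $d(w,x)\ge 2^k r$ with $k\ge1$, then every $z\in B(x,r)$ satisfies $d(z,w)\ge 2^{k-1}r$. Combined with $|B(x,r)|\le v_0^{-1}r^n$, this gives
\[
K_t(w)\le v_0^{-1}\,r^n t^{-n/2}\,e^{-4^{k-1}r^2/(\mathscr C t)}.
\]

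\textbf{Step 2 (time integration and annular sum).} Integrate in $t$ over $(0,r^2]$ and split $M$ into $B(x,2r)$ and the dyadic annuli $A_k:=B(x,2^{k+1}r)\setminus B(x,2^kr)$ for $k\ge1$.

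On $B(x,2r)$ the uniform bound on $K_t$ gives a contribution at most $C r^2\int_{B(x,2r)}\ell^{1+\beta}(\cdot,0)\,d\mu$. Covering $B(x,2r)$ by $N(n,v_0)$ balls of radius $r\le1$ and applying \textbf{(A2)} to each, this is $\le C\eps r^{n-2\beta}$.

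On $A_k$, substitute $t=r^2u$. The time integral of the second kernel bound is then at most $C r^2\int_0^1 u^{-n/2}e^{-4^{k-1}/(\mathscr C u)}\,du\le C r^2 e^{-c4^k}$. By the same covering argument, $A_k$ is covered by $N_k\le C2^{kn}e^{C2^{k}}$ balls of radius $r$ (using $r\le1$), each carrying $\int\ell^{1+\beta}(\cdot,0)\,d\mu\le\eps r^{n-2-2\beta}$. The series $\sum_k 2^{kn}e^{C2^k-c4^k}$ converges to a constant depending only on $(n,v_0,\mathscr C)$, which yields the claimed bound.

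\textbf{Step 3 (extension).} Continuity in time extends the strict bound from $[0,\tau]$ to a slightly larger interval. On $[\tau,\tau']$ the defect is bounded by \textbf{(B2)} and \textbf{(B1)}, and balls have uniformly bounded volume, so the extra contribution is at most $C(\tau'-\tau)$. After shrinking $\tau'$, this extra is at most $\tfrac12\mathscr A r^{n-2\beta}$ uniformly in $r\ge\sqrt\tau$. For $r^2\le\tau$ the integral is unchanged, since its upper limit $\min\{r^2,\tau'\}$ stays equal to $r^2$.

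I expect the main obstacle to be bookkeeping rather than a real difficulty. One point is that $B(x,2^{k+1}r)$ may have radius larger than $1$, which is where the exponential volume growth of Remark~\ref{rem:initial-assumptions}(a) enters and must be beaten by the Gaussian factor $e^{-c4^k}$. The other is checking that the final constant depends only on $(n,v_0,\mathscr B,\mathscr C)$, so that $\underline{\mathscr A}$ respects the ordering~\eqref{order-constants}.
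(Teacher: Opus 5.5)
Your proposal is correct and follows the paper's proof essentially step for step. Both use \textbf{(B6)} and Fubini, split the initial integral into $B(x,2r)$ and the dyadic annuli $A_k$, use a uniform Gaussian bound near $x$ and the $e^{-c4^k}$ decay far away, cover by balls of radius $r$ with \textbf{(A2)}, and take $\underline{\mathscr A}=2C$ using $\varepsilon\le1$. Your Step~3 adds one thing the paper leaves implicit: an explicit continuity argument passing from the strict bound on $[0,\tau]$ to an extension $[0,\tau']$, which usefully does not require \textbf{(B6)} on the extension.
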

	
	\begin{proof}
		Fix \(x\in M\) and \(0<r\leq1\), and set $\theta:=\min\{r^2,\tau\}$. Let us first use a priori assumption \textup{\textbf{(B6)}} to bound
		\begin{align} \label{B5_first}
		\mc J &:= \int_0^\theta \int_{B(x,r)} \ell^{1+\beta} d\mu dt
		\leq \mathscr B^{1+\beta} \int_0^\theta \int_{B(x,r)} \widehat{\ell}^{\, 1+\beta}_{\sqrt{t}} \,  d\mu \,  dt \nonumber
		\\ & = \mathscr B^{1+\beta} \int_{M}
		\ell^{1+\beta}(z,0)
		\int_0^\theta
		t^{-n/2}
		\int_{B(x,r)}
		e^{
			-\frac{d^2(y,z)}{\mathscr Ct}}
		d\mu(y)\,dt\,d\mu(z) = \mathscr B^{1+\beta}(\mathcal J_1 + \mathcal J_2),
		\end{align}
		where we have used the definition of \(\widehat\ell_{\sqrt t}\) and 
		Fubini's theorem. Here $\mathcal J_1$ and $\mathcal J_2$ correspond to splitting the outer integral into
		$
		B(x,2r)$ and 
		$
		M\setminus B(x,2r)
		$, respectively.

		For the first region, the volume growth estimate from Remark \ref{rem:initial-assumptions} (a) applied on suitable annuli exhausting $M$ gives 
		\[
		t^{-n/2}
		\int_M
		\exp\left(
		-\frac{d^2(y,z)}{\mathscr Ct}
		\right)d\mu(y)
		\leq
		C_0,
		\]
		where
		$
		C_0=C_0(n,v_0, \mathscr C)<\infty.
		$ Hence
		\[
		\begin{aligned}
		&\mathcal J_1
		\leq
		C_0\theta
		\int_{B(x,2r)}
		\ell^{1+\beta}(\cdot,0)\,d\mu.
		\end{aligned}
		\]
		By the uniform two-sided volume bounds \textup{\textbf{(A1)}}, a standard maximal-disjoint-ball argument shows that \(B(x,2r)\) can be covered by at most \(N=N(n,v_0)\) balls of radius \(r\). Assumption
		\textup{\textbf{(A2)}} therefore gives
		\[
		\int_{B(x,2r)}
		\ell^{1+\beta}(\cdot,0)\,d\mu
		\leq
		C_0 N \varepsilon r^{n-2(1+\beta)}.
		\]
		Since \(\theta\leq r^2\), the contribution of \(B(x,2r)\) is bounded
		by $C_0 N \varepsilon r^{n-2\beta}$.
		
		Now notice that the complementary region $M \setminus B(x, 2r)$ can be covered by the  annuli
		\[
		A_k
		:=
		B(x,2^{k+1}r)\setminus B(x,2^kr),
		\qquad
		k\geq1.
		\]

		Hereafter we denote by $C$ any generic constant depending on $n, \, v_{0}$, $\mathscr B$ and $\mathscr C. $ If \(z\in A_k\), \(k\geq1\), and \(y\in B(x,r)\), then
		$
		d(y,z)
		\geq
		d(x,z)-d(x,y)
		\geq
		(2^k-1)r
		\geq
		2^{k-1}r.
		$
		Consequently, using \textup{\textbf{(A1)}}, we obtain
		\[
		t^{-n/2}
		\int_{B(x,r)}
		\exp\left(
		-\frac{d^2(y,z)}{\mathscr Ct}
		\right)d\mu(y)
		\leq
		Cv_0^{-1}r^nt^{-n/2}
		\exp\left(
		-\frac{4^{k-1}r^2}{\mathscr Ct}
		\right).
		\label{eq:B5-far-Gaussian}
		\]
		Set $p:=1+\beta$. Applying the maximal-separated-set argument
		from the proof of Proposition~\ref{prop:lower-curvature-improvement} at scale $r$, and using the
		volume estimates in Remark~\ref{rem:initial-assumptions} (a), we can cover $A_k$ by at most
		\[
		N_k\le C2^{kn}\exp(C2^kr)
		\]
		balls of radius $r$. Since $r\le1$, assumption \textup{\textbf{(A2)}} applies
		on each of these balls, and hence
		\[
		\int_{A_k}\ell^p(\cdot,0)\,d\mu
		\le
		C\varepsilon 2^{kn}\exp(C2^kr)r^{n-2p}.
		\]
		Since $\theta\le r^2$, the change of variables $u=r^2/t$ gives
		\[
		\int_0^\theta
		t^{-n/2}
		\exp\left(-\frac{4^{k-1}r^2}{\mathscr Ct}\right)\,dt
		\le
		r^{2-n}\int_1^\infty
		u^{n/2-2}
		\exp\left(-\frac{4^{k-1}}{\mathscr C}u\right)\,du
		\le
		Cr^{2-n}e^{-c4^k}.
		\]
		Thus the contribution of $A_k$ to $\mathcal J_2$ is bounded by
		$
		C\varepsilon r^{n-2\beta}
		2^{kn}\exp\left(C2^kr-c4^k\right).
		$
		Since $r\le1$,
		we conclude that
		$
		\mathcal J_2\le C\varepsilon r^{n-2\beta}.
		$
		
		Finally, as \(\varepsilon\leq1\), we deduce that $\mc J \leq C \, r^{n - 2\beta}$. The statement follows for $\mathscr A \geq 2 C =: \underline{\mathscr A}(n,  v_{0}, \mathscr B, \mathscr C)$.
	\end{proof}

	\subsection{Extension of Gaussian bound for the curvature defect (B6)}
	\label{subsec:Gaussian-curvature-improvement}
	We next improve the Gaussian bound \textup{\textbf{(B6)}}. The proof combines
	the Gaussian heat-kernel bound with a reproduction estimate for the
	weights defining $\widehat\ell_r$. We first establish this
	reproduction estimate.

	\begin{lemma}[Gaussian reproduction formula] \label{lem_complicated_reproduction_formula}
		Under the standing assumptions of this section, for any
		\[ \mathscr E > 0, \qquad \text{and} \qquad 
		\eta \leq \overline\eta(n, v_{0}, \beta, \mathscr A),
		\]
		there is a constant $C = C(n, v_{0}, \mathscr E) < \infty$ such that the following holds:
		
		For $t \in (0,  \tau]$, $s \in (0, t/8]$ and $x, z \in M$ we have
		\begin{equation}
		\frac1{s^{ n/2}} \int_M \exp\bigg({ - \frac{d^2 (x, \cdot)}{\frac14 \mathscr E \cdot t}  - \frac{d^2(\cdot,z)}{\mathscr E s} }\bigg)  d\mu_s \leq C \exp \bigg({ - \frac{d^2 (x,z)}{\mathscr E t} }\bigg).   \label{eq_rep_formula_Z2}
		\end{equation}
	\end{lemma}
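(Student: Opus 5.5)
The plan is to reduce \eqref{eq_rep_formula_Z2} to two ingredients: a pointwise inequality for the exponent, which is a purely metric computation in the time-$0$ distance $d$, and a bound on the $g(s)$-volume of time-$0$ balls of radius $\sqrt s$. All distances in \eqref{eq_rep_formula_Z2} are measured with $d=d_0$, and only the measure is taken at time $s$.

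\emph{Step 1 (splitting the exponent).} Set $a:=\frac{4}{\mathscr E t}$ and $b:=\frac1{\mathscr E s}$, and write $b=\frac b2+\frac b2$. For every $w\in M$, the triangle inequality $d(x,z)\le d(x,w)+d(w,z)$ combined with the elementary inequality $aX^2+b'Y^2\ge \frac{ab'}{a+b'}(X+Y)^2$, applied with $b'=b/2$, gives
\[
a\,d^2(x,w)+\tfrac b2\,d^2(w,z)\ \ge\ \frac{4}{\mathscr E(t+8s)}\,d^2(x,z)\ \ge\ \frac{2}{\mathscr E t}\,d^2(x,z)\ \ge\ \frac1{\mathscr E t}\,d^2(x,z).
\]
The second inequality uses $s\le t/8$, so that $t+8s\le 2t$. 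Consequently the left-hand side of \eqref{eq_rep_formula_Z2} is at most
\[
\exp\Bigl(-\frac{d^2(x,z)}{\mathscr E t}\Bigr)\cdot s^{-n/2}\int_M \exp\Bigl(-\frac{d^2(\cdot,z)}{2\mathscr E s}\Bigr)\,d\mu_s .
\]
It therefore remains to show that this last Gaussian integral is bounded by $C(n,v_0,\mathscr E)$.

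\emph{Step 2 (volume of small balls at time $s$; the main obstacle).} The natural estimate is $\partial_\sigma d\mu_\sigma=-\scal\,d\mu_\sigma$ together with $\scal\ge -n(n-1)\ell$, which follows from \eqref{eq:Ricci-lower-from-defect}. This gives
\[
d\mu_s\le \exp\Bigl(n(n-1)\int_0^s\ell(\cdot,\sigma)\,d\sigma\Bigr)\,d\mu .
\]
The pointwise bound \textup{\textbf{(B2)}}, $\ell\le\eta/\sigma$, is not integrable at $\sigma=0$, so it alone does not control this factor. Instead, for $y\in M$ I will apply Lemma~\ref{lem:exponential-integrability} on the probability space $B(y,\sqrt s)$ with normalized $\mu$, taking $r=\sqrt s\le1$ and $f=\ell$. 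The Morrey hypothesis there is supplied by \textup{\textbf{(B5)}} at radius $\sqrt s$, since $\min\{s,\tau\}=s$; after normalizing with \textup{\textbf{(A1)}} this gives the constant $\max\{n(n-1),\mathscr A/v_0\}$. The pointwise hypothesis is \textup{\textbf{(B2)}}. Taking $\delta=1$ and $\eta\le\overline\eta(n,v_0,\beta,\mathscr A)$ yields
\[
|B(y,\sqrt s)|_s\le 2|B(y,\sqrt s)|\le 2v_0^{-1}s^{n/2}.
\]
This is exactly where the restriction on $\eta$ enters.

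\emph{Step 3 (summing over a covering).} Cover $M$ by the annuli $A_k=B(z,(k+1)\sqrt s)\setminus B(z,k\sqrt s)$, $k\ge0$. As in the proof of Proposition~\ref{prop:lower-curvature-improvement}, cover each $A_k$ by $N_k\le C(k+2)^n e^{C(k+2)}$ time-$0$ balls of radius $\sqrt s$, using \textup{\textbf{(A1)}} and Remark~\ref{rem:initial-assumptions}(a) with $\sqrt s\le1$. On $A_k$ the integrand is at most $e^{-k^2/(2\mathscr E)}$, and Step~2 bounds the $\mu_s$-measure of each covering ball. Hence
\[
s^{-n/2}\int_M e^{-d^2(\cdot,z)/(2\mathscr E s)}\,d\mu_s\le 2v_0^{-1}\sum_{k\ge0}C(k+2)^ne^{C(k+2)-k^2/(2\mathscr E)}=C(n,v_0,\mathscr E)<\infty .
\]
Combining this with Step~1 proves \eqref{eq_rep_formula_Z2}.
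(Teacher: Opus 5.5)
Your proof is correct. It rests on the same key estimate as the paper's, namely $|B(q,\sqrt s)|_s\le 2|B(q,\sqrt s)|\le 2v_0^{-1}s^{n/2}$, obtained from Lemma~\ref{lem:exponential-integrability} together with \textbf{(B2)}, \textbf{(B5)} and \textbf{(A1)}. The two arguments differ only in how the Gaussian weight is handled.

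The paper first inserts the factor $1\le v_0^{-1}\rho^{-n}|B(y,\rho)|$ with $\rho=\sqrt s$. It then shifts the exponent from the integration point $y$ to a point $q$ with $d(y,q)<\rho$, losing a factor $e^{2/\mathscr E}$. Fubini then turns the integral into $\int_M |B(q,\rho)|_s\, e^{-d^2(q,z)/(4\mathscr E s)}\,d\mu(q)$, so the remaining Gaussian integral is taken against the time-$0$ measure and is bounded by time-$0$ volume growth on annuli.

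You instead split the exponent pointwise at the integration point using the harmonic-mean inequality. This costs no additive constant, and it even gives the sharper rate $2/(\mathscr E t)$ before you discard the extra factor. You then bound the time-$s$ measure of each annulus directly, by covering it with time-$0$ balls of radius $\sqrt s$ and applying the ball estimate to each.

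Your route is slightly more direct. The paper's Fubini averaging does the same bookkeeping in continuous form instead of through an explicit cover. The ingredients and the dependence $C=C(n,v_0,\mathscr E)$ are the same.
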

	
	\begin{proof}
		Set $\rho := \sqrt{s}$ and denote by $\mc I$ the left-hand side of \eqref{eq_rep_formula_Z2}.
		By assumption \textup{\textbf{(A1)}},  we get the bound
		\begin{equation} \label{eq_reprod_y_prime_ball} \mc I \leq 
		v_{0}^{-1} \rho^{-2n} \int_M \int_{B(y, \rho)}  \exp\bigg({ - \frac{d^2 (x, y)}{\frac14 \mathscr E \cdot t}  - \frac{d^2(y,z)}{\mathscr E s} }\bigg)  d\mu(q) d\mu_s(y).
		\end{equation}
		Next, note that if $d(y, q) < \rho$, then the triangle inequality and $(a-b)^2 \geq  \frac1{2} a^2 -b^2$ yield
		\[
		\begin{aligned}
		\frac{d^2(x, y)}{\frac14 \mathscr E \cdot t}  + \frac{d^2(y,z)}{\mathscr E s} & \geq \frac{d^2(x, q)}{\frac12 \mathscr E \cdot t}  + \frac{d^2(q,z)}{2\mathscr E s} - \frac{\rho^2}{\frac14 \mathscr E \cdot t} - \frac{\rho^2}{\mathscr E s}  \\
		&\geq  \frac{d^2(x, q) + d^2(q,z)}{\frac12 \mathscr E \cdot t} + \frac{d^2 (q,z)}{4\mathscr E s} - \frac{2}{\mathscr E} 
		\geq \frac{d^2(x, z)}{\mathscr E t} + \frac{d^2(q,z)}{4\mathscr E s} - \frac{2}{\mathscr E},
		\end{aligned}
		\]
		where the second inequality follows from $\rho^2 = s \leq t/8$, while for the latter we applied the triangle inequality and $(a + b)^2 \leq 2(a^2 + b^2)$.
		
		Combining this with \eqref{eq_reprod_y_prime_ball} implies that $\ds \mc I  \leq  v_{0}^{-1} e^{2/\mathscr E}  \exp\bigg({ -\frac{d^2(x, z)}{\mathscr E t}   }\bigg) \rho^{-2n} \mc J$, with
		\[
		\begin{aligned}
		\mc J & =  \int_M \int_{B(y, \rho)}  \!\!\! \exp\bigg({ - \frac{d^2 (q,z)}{4\mathscr E s}  }\bigg)  d\mu(q) d\mu_s(y)  =  \int_M \int_{B(q, \rho)}  \!\!\!\exp\bigg({ - \frac{d^2(q,z)}{4\mathscr E s}  }\bigg)  d\mu_s(y) d\mu(q) \\
		& =  \int_M |B(\cdot, \rho)|_s \cdot  \exp\bigg({ - \frac{d^2 (\cdot,z)}{4\mathscr E s}  }\bigg)   d\mu.
		\end{aligned}
		\]
		In order to estimate $|B(\cdot, \rho)|_s$ we proceed as follows: from \eqref{eq:Ricci-lower-from-defect}, we get $\scal_{g(t)} \geq -n (n-1) \ell$ and hence $\partial_t(\log d \mu_t )  \leq n(n-1)\ell$. Accordingly, we have
		\[ |B(q, \rho)|_s :=  \int_{B(q, \rho)} d\mu_s \leq \int_{B(q, \rho)} \exp \bigg(n(n-1) \int_0^s \ell(\cdot,\sigma)\,d\sigma \bigg) d\mu \]
		and next we can bound the right hand side by using Lemma \ref{lem:exponential-integrability}. In fact, as $\rho^2  \leq \tau$ a priori assumption \textup{\textbf{(B5)}} and \textup{\textbf{(A1)}} imply
		\[\int_0^{\rho^2}\fint_{B(q, \rho)} \ell^{1+\beta}(\cdot,\sigma)\,d\mu\,d\sigma \leq \mathscr A v_{0}^{-1} \rho^{-2 \beta},\]
		where $\fint_U F := \frac1{|U|}\int_U F$. 
		Now using Lemma \ref{lem:exponential-integrability} for $A = \max\{\mathscr A \cdot v_{ 0}^{-1}, n(n-1)\}$ we can find a constant 
		$\overline\eta = \overline\eta(n, v_0, \beta, \mathscr A)$ so that, if a priori assumption \textup{\textbf{(B2)}} holds for $\eta \leq \overline\eta$, then
		we get 
		\[ |B (q, \rho)|_s    \leq 2 |B(q, \rho)|\leq 2 v_{ 0}^{-1} \rho^n, \]
		where the latter follows from assumption \textup{\textbf{(A1)}}. So the left-hand side of \eqref{eq_rep_formula_Z2} can be bounded as follows
		\[ \mc I \leq 2 v_{0}^{-2} e^{2/\mathscr E}  \exp\bigg({ -\frac{d^2 (x, z)}{\mathscr E t}   }\bigg) \cdot  \int_M  \rho^{-n} \exp\bigg({ - \frac{d^2 (\cdot,z)}{4\mathscr E s}  }\bigg)   d\mu. \]
		Notice that the remaining Gaussian integral is uniformly bounded. Indeed,
		decomposing \(M\) into the annuli
		$
		B(z,(k+1)\rho)\setminus B(z,k\rho)
		$
		and using the volume-growth estimate from
		Remark~\ref{rem:initial-assumptions}\textnormal{(a)}, we obtain
		\[
		\begin{aligned}
		&\rho^{-n}
		\int_M
		\exp\left(
		-\frac{d^2(\cdot,z)}{4\mathscr E\rho^2}
		\right)d\mu
		\leq
		C(n,v_0)
		\sum_{k=0}^{\infty}
		(k+1)^n
		\exp\left(
		C(n,v_0)(k+1)\rho-\frac{k^2}{4\mathscr E}
		\right).
		\end{aligned}
		\]
		Since \(\rho=\sqrt s\leq1\), the series is bounded by a constant
		depending only on \(n,v_0,\mathscr E\).
	\end{proof}
	
	Now we are in a position to prove the remaining extension property.
	
	\begin{prop} \label{improved_Gaussian}
		If
		\[\eta \leq \min\{1, \overline\eta(n, v_{0}, \beta, \mathscr A)\}, \,
		\mathscr C \geq \underline{\mathscr C}(n, v_{0}, \mathscr K), \,
		\mathscr B \geq \underline{\mathscr B} (n, v_{0}, \mathscr K) \, \text{and} \quad
		\eps \leq \overline{\eps} (n, \beta, v_{0}, \mathscr K, \mathscr B, \mathscr C), \]
		then the Ricci flow admits an extension beyond $\tau$ on which
		\textup{\textbf{(B6)}} remains valid.
	\end{prop}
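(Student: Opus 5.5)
The plan is to represent $\ell(x,t)$ by a Duhamel formula with the heat kernel $G$ of Subsection~\ref{sec_hk}, starting from an intermediate time, and to close the estimate using Proposition~\ref{thm:heat}. The barrier inequality of Proposition~\ref{prop:ell-barrier} holds distributionally. Since $\partial_t\, d\mu_t=-\scal\, d\mu_t$ and $G(x,t;\cdot,\cdot)$ solves the backward heat equation with no scalar term, the scalar curvature term cancels. For $0\le s<t$ this gives
\[
\ell(x,t)\le \int_M G(x,t;y,s)\,\ell(y,s)\,d\mu_s(y)+C_n\int_s^t\int_M G(x,t;y,\sigma)\,\ell^2(y,\sigma)\,d\mu_\sigma(y)\,d\sigma .
\]
The hypotheses of Proposition~\ref{thm:heat} hold with $A=A(\mathscr K,v_1)$ by \textbf{(B1)} and \textbf{(B3)}. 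Hence $G(x,t;y,\sigma)\le C t^{-n/2}\exp(-d_\sigma^2(x,y)/(Ct))$ for $\sigma\le t/2$, with $C=C(n,\mathscr K)$.

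The two terms are handled separately.

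\emph{The quadratic term.} On $[t/2,t]$, use $\ell\le \eta/\sigma$ from \textbf{(B2)} together with $\int G\,d\mu_\sigma\le \exp(\int\scal)\le C(\mathscr K)$, which follows from \textbf{(B1)}. Write $\ell^2\le(\eta/\sigma)\ell$. Iterating (or comparing with the already-controlled linear part) gives a contribution of at most $C\eta\,\sup_{[t/2,t]}(\ell/\widehat\ell_{\sqrt\sigma})\,\widehat\ell_{\sqrt t}$, which is absorbed for $\eta$ small. On $[0,t/2]$, use the Gaussian bound, $\ell^2\le(\eta/\sigma)\ell$, and \textbf{(B6)} to get $\ell(y,\sigma)\le\mathscr B\widehat\ell_{\sqrt\sigma}(y)$.

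\emph{The linear term.} Let $s\to0$. I need $\int G(x,t;y,s)\ell(y,s)\,d\mu_s\to\int G(x,t;y,0)\ell(y,0)\,d\mu$; the curvature is bounded at positive times, so the kernel converges.

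\emph{Converting to the Gaussian average.} The key manipulation is to pass from $d_\sigma$ to $d_0$ in the Gaussian. The upper distance bound \textbf{(B4)} gives $d_0\lesssim$ something only in the wrong direction, so I instead use the distance-shrinking estimate from \textbf{(B1)}: $d_\sigma\ge d_0-C\sqrt{\mathscr K\sigma}$. This yields $\exp(-d_\sigma^2/(Ct))\le C\exp(-d_0^2/(2Ct))$. Then apply Hölder with exponent $1+\beta$ against the Gaussian weight, whose total mass is bounded by Remark~\ref{rem:initial-assumptions}(a). This gives
\[
\int G(x,t;y,0)\,\ell(y,0)\,d\mu\le C_1\,\widehat\ell_{\sqrt t}(x)
\]
provided $\mathscr C\ge 2C$; this fixes $\underline{\mathscr C}(n,v_0,\mathscr K)$.

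For the time integral over $[0,t/2]$ I insert \textbf{(B6)} at time $\sigma$, so that $\widehat\ell_{\sqrt\sigma}$ is itself a Gaussian average of $\ell(\cdot,0)$. Fubini, followed by Hölder to move the power $1/(1+\beta)$ outside, leaves a product of two Gaussians. This is exactly what Lemma~\ref{lem_complicated_reproduction_formula} handles with $\mathscr E\sim\mathscr C$, at least for $\sigma\le t/8$. On $[t/8,t/2]$ the two scales are comparable, and a direct annulus estimate suffices.

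The output is a bound $C_2\mathscr B\,\eta^{1-\theta}\,\widehat\ell_{\sqrt t}(x)$, using $\int_0^{t}\sigma^{-1}(\sigma/t)^{\gamma}d\sigma<\infty$. The needed decay factor $(\sigma/t)^\gamma$ comes from $\widehat\ell_{\sqrt\sigma}\le(t/\sigma)^{n/(2(1+\beta))}\widehat\ell_{\sqrt t}$ in the wrong direction. So I must interpolate $\ell^2\le\ell^{1+\theta}(\eta/\sigma)^{1-\theta}$ and use Proposition~\ref{prop:lower-curvature-improvement}'s bound $\widehat\ell\le\mathscr D\sqrt\eps/\sigma$. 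This introduces the $\eps$-dependence $\eps\le\overline\eps$.

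\emph{Closing.} Altogether,
\[
\ell(x,t)\le\bigl(C_1+C_2\mathscr B\eta^{c}+C_3\mathscr B^{2}\eps^{c}\bigr)\,\widehat\ell_{\sqrt t}(x).
\]
Choosing $\mathscr B\ge 2C_1$ and then $\eta,\eps$ small gives $\ell\le\frac{\mathscr B}{2}\widehat\ell_{\sqrt t}$ strictly on $[0,\tau]$. Continuity of $\ell$ and of $\widehat\ell_{\sqrt t}$ in $t$ then extends \textbf{(B6)} slightly beyond $\tau$, after decreasing $\tau'$ if necessary.

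The main obstacle is the quadratic term for small $\sigma$. There the kernel's Gaussian is in $d_\sigma$ at scale $\sqrt t$, while $\ell(\cdot,\sigma)$ is only controlled at the finer scale $\sqrt\sigma$. Getting an integrable-in-$\sigma$ bound with a small prefactor is where I expect to spend the effort. I would try first to combine the reproduction Lemma~\ref{lem_complicated_reproduction_formula} with the interpolation above, and then check that the powers of $\sigma/t$ come out integrable.
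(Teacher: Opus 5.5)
The core of your argument is the paper's: Duhamel against $G$ (the $\scal$ terms cancel), H\"older against the Gaussian for the linear term, and, for the quadratic term at small times, the interpolation $\ell^2\le\ell^{1+\beta}(\eta/\sigma)^{1-\beta}$ together with \textbf{(B6)}, Fubini, Lemma~\ref{lem_complicated_reproduction_formula} and \eqref{eq_lem_B2_l_hat_bound}. However, your treatment of times near $t$ does not work as written.

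The bound $C\eta\sup_{[t/2,t]}(\ell/\widehat\ell_{\sqrt\sigma})\,\widehat\ell_{\sqrt t}(x)$ needs $\int_M G(x,t;\cdot,\sigma)\,\widehat\ell_{\sqrt\sigma}\,d\mu_\sigma\le C\widehat\ell_{\sqrt t}(x)$ for $\sigma$ close to $t$. Proposition~\ref{thm:heat} only covers $\sigma\le t/2$. Even a time-shifted version has its Gaussian in $d_\sigma$ with width $\sqrt{t-\sigma}$, while the only available passage to $d_0$ costs an additive $C\sqrt{\mathscr K t}$. So you can only localize the kernel to a $d_0$-ball of radius $\sim\sqrt t$. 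On such balls a Gaussian average with fixed constant $\mathscr C$ satisfies no Harnack bound: if $\ell(\cdot,0)$ is concentrated at distance $R\gg\sqrt t$ from $x$, then $\widehat\ell_{\sqrt t}$ varies by a factor of order $e^{cR/\sqrt t}$ across $B(x,\sqrt t)$. Absorbing the supremum also presupposes that it is finite beyond $\tau$, which is what you are proving.

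The paper's device removes this problem. If $(n-1)(n-2)\eta\le1$, then \textbf{(B2)} bounds the quadratic term by $\ell/t$, so $\partial_t(\ell/t)\le\Delta(\ell/t)+\scal\,(\ell/t)$. Hence $\sigma\mapsto\int_M G(x,t;\cdot,\sigma)\,\sigma^{-1}\ell(\cdot,\sigma)\,d\mu_\sigma$ is nonincreasing, which gives \eqref{eq_ell_bound_to_alpha_t0}. Duhamel is then needed only on $[0,t/8]$, where the heat-kernel bound and the reproduction lemma apply, and no $[t/8,t/2]$ annulus estimate is required. Your parenthetical ``comparing with the linear part'', made precise as Gronwall for $\Phi(\sigma)=\int_M G(x,t;\cdot,\sigma)\,\ell(\cdot,\sigma)\,d\mu_\sigma$ with $\Phi'\le C\eta\,\sigma^{-1}\Phi$, is the same idea and should replace the absorption.

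The extension step has a related gap. On a complete noncompact $M$, continuity in $t$ does not propagate a multiplicative bound. The function $\widehat\ell_{\sqrt t}$ has Gaussian tails and no positive lower bound, while Shi-type estimates control $\ell(\cdot,t)-\ell(\cdot,\tau)$ only additively, so no uniform $\tau'>\tau$ follows. The paper instead observes that, after the monotonicity step, \textbf{(B6)} enters only at times $s\le t/8$. Hence for $\tau<t\le\tau'<\min\{8\tau,\tau_0\}$ the same estimate runs with \textbf{(B6)} used only below $\tau$ and \textbf{(B1)}--\textbf{(B5)} on the extension. This gives $\ell\le\frac12\mathscr B\,\widehat\ell_{\sqrt t}$ there directly.

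A smaller point: take $\theta=\beta$, so that \textbf{(B6)} raised to the power $1+\beta$ feeds straight into the reproduction lemma and $\int_0^{t/8}\sigma^{\beta-1}d\sigma$ yields $C t^\beta\,\widehat\ell^{\,1+\beta}_{\sqrt t}(x)$. Then apply \eqref{eq_lem_B2_l_hat_bound} at the final time $t$ to get $C\eps^{\beta/2}\widehat\ell_{\sqrt t}(x)$. Applying it at time $\sigma$, as your $\widehat\ell\le\mathscr D\sqrt\eps/\sigma$ suggests, brings back the divergent $\int\sigma^{-1}d\sigma$.
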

	
	\begin{proof}
		If $(n-1) (n-2)\eta \leq 1$, then the evolution inequality from Proposition \ref{prop:ell-barrier} and \textup{\textbf{(B2)}}
		imply that
		\begin{equation} \label{eq_evol_ineq_ell_1t_term}
		\partial_t \ell \leq \triangle \ell + \scal  \ell + (n-1)(n-2) \ell^2 \leq \triangle \ell + \scal \ell + \frac{\ell}{t}  
		\end{equation}
		and hence
		\begin{equation} \label{din_l1}
		\partial_t \big( t
		^{-1} \ell \big) \leq \triangle \big( t^{-1} \ell \big) + \scal \big( t^{-1} \ell \big). 
		\end{equation}
		
		Let \(G(x,t;y,s)\) be the heat kernel introduced in \eqref{heat_ker_def}. Now, for fixed \((x,t)\), set
		\[ F(s) := \int_M G(x,t;y,s)\frac{\ell(y,s)}{s}\,d\mu_s(y), \qquad 0<s<t. \]
		Integration by parts and \eqref{din_l1} lead to  $F'(s)\leq0$.  Hence, for every  $\sigma \in [\frac{t}{8},t)$, we have $F(\sigma)\leq F\left(\frac{t}{8}\right).$ Letting \(\sigma\nearrow t\) and using $ G(x,t;\cdot,\sigma)\longrightarrow\delta_x, $ we deduce
		\begin{equation} \label{eq_ell_bound_to_alpha_t0}
		\ell (x,t) \leq 8 \int_M G(x,t; \cdot,\tfrac{t}{8}) \, \ell (\cdot, \tfrac{t}{8}) d\mu_{t/8}. 
		\end{equation}
		
		Next, plugging $(n-1) (n-2)\eta^{1-\beta} \leq 1$ into the evolution inequality of $\ell$ and using again \textup{\textbf{(B2)}}, we reach
		\[ \partial_t \ell \leq \triangle \ell + \scal  \ell + \frac{\ell^{1+\beta}}{t^{1-\beta}}. \]
		By integration of this inequality against the heat kernel $G$ in space, we deduce
		\[ \frac{d}{d s} \int_M  G(x,t; \cdot, s) \, \ell (\cdot, s) d\mu_{s} \leq \frac1{s^{ 1-\beta}}  \int_M  G(x,t; \cdot, s) \ell^{1+\beta} (\cdot, s) d\mu_s. \]
		Integrating this inequality over the time-interval $[0, t/8]$ and combining it with \eqref{eq_ell_bound_to_alpha_t0} yields
		\begin{equation} \label{eq_ell_bound_2_integrals}
		\ell (x, t) \leq 8 \int_M G(x, t; \cdot, 0) \,\ell (\cdot, 0) d\mu 
		+ 8 \int_0^{\frac{t}{8}} \int_M G(x, t; \cdot,s) \frac{1}{s^{1-\beta}} \ell^{1+\beta}(\cdot,s) d\mu_{s} ds. 
		\end{equation}
		
		From the Gaussian estimate on $G$  (cf.~Proposition \ref{thm:heat}) and H\"older's inequality, we can find a constant $\mathscr C_\ast = \mathscr C_\ast (n, v_{0}, \mathscr K)<\infty $ such that
		\begin{align}  \label{eq:first_integral_bound}
		\int_M G(x, t; \cdot, 0) \ell (\cdot, 0) d\mu &\leq 
		\int_M \frac{\mathscr C_\ast}{t^{n/2}} \exp \bigg({ - \frac{d^2(x,\cdot)}{\mathscr C_\ast t}}\bigg) \ell (\cdot,0) d\mu  \nonumber \\
		& \leq   \bigg[ \int_M \tfrac{\mathscr C_\ast}{t^{n/2}} e^{-\frac{d^2 (x,\cdot)}{\mathscr C_\ast t}} \ell^{1 + \beta}(\cdot, 0) d\mu  \bigg]^{\frac1{1+\beta}}  \cdot
		\bigg[\int_M \tfrac{\mathscr C_\ast}{t^{n/2}}  e^{-\frac{d^2(x,\cdot)}{\mathscr C_\ast t}}  d\mu  \bigg]^{\frac{\beta}{1+\beta}}  \nonumber \\
		& \leq C \widehat{\ell}_{\sqrt{t}} (x). 
		\end{align}
		for some $C = C(n, v_0, \mathscr K) < \infty$. Here the second factor is uniformly bounded by the volume-growth estimate in Remark~\ref{rem:initial-assumptions}\textup{(a)}, while the first factor is controlled by \(\widehat\ell_{\sqrt t}(x)\) after choosing the Gaussian constant \(\mathscr C\) in its definition so that $\mathscr C\geq \mathscr C_\ast(n, v_0, \mathscr K)$.

		Let us now estimate the double integral $\mc I$ in  \eqref{eq_ell_bound_2_integrals}.
		First, using a priori assumption \textup{\textbf{(B6)}} and the Gaussian heat kernel bound from Proposition  \ref{thm:heat}, we find that
		\[
		\begin{aligned}
		\mc I 
		& \leq \mathscr B^{1+\beta} \int_0^{ t/8} \int_M \frac{\mathscr C_\ast}{(t - s)^{n/2}} \exp\bigg({ - \frac{d^2_{s} (x, \cdot)}{\mathscr C_\ast (t - s)} }\bigg) \cdot  \frac{1}{s^{1-\beta}} \widehat{\ell}^{1+\beta}_{\sqrt{s}} \, d\mu_{s} \, ds   \\
		& \leq \frac{2^{n/2} \mathscr B^{1+\beta}\mathscr C_\ast}{t^{n/2}} \int_0^{t/8} \int_M \exp\bigg({ - \frac{d^2_{s} (x, \cdot)}{ \mathscr C_\ast t} }\bigg) \cdot  \frac{1}{s^{1-\beta}} \, \widehat{\ell}^{1+\beta}_{\sqrt{s}} \,  d\mu_{s} ds,
		\end{aligned}
		\]
		where the latter follows because, for \(0<s\leq t/8\), we have $\frac{t}{2}\leq t-s\leq t.$

		A priori assumption \textup{\textbf{(B1)}} and Hamilton's  distance distortion bound \eqref{net_timet}
		ensure that for some $C_1 = C_1 (n, v_0, \mathscr K, \mathscr B)$
		we have
		\[
		\begin{aligned}
		\frac{\mc I}{\mathscr B} & \leq \frac{C_1}{t^{n/2}} \int_0^{ t/8} \int_M \exp\bigg({ - \frac{d^2 (x, \cdot)}{2 \mathscr C_\ast t} }\bigg) \cdot  \frac{1}{s^{1-\beta}} \widehat{\ell}^{\, 1+\beta}_{\sqrt{s}}  d\mu_{s} \, ds \\
		& = \frac{C_1}{t^{n/2}} \!\!\int_0^{ t/8} \!\!\!\int_{M \times M} \!\!\!\exp\bigg({ - \frac{d^2(x, y)}{2\mathscr C_\ast t} - \frac{d^2(y,z)}{\mathscr C s}}\bigg) \frac{1}{s^{1-\beta}}  \frac1{s^{ n/2}} \ell^{1+\beta} (z,0) d\mu(z) d\mu_{s} (y) ds.
		\end{aligned}
		\]
		Here the equality follows from the definition of \(\widehat\ell_{\sqrt s}\) and Tonelli's theorem.
		
		Now assume furthermore that $\mathscr C \geq 8 \mathscr C_\ast$ and $\eta \leq \overline\eta(n, v_{0}, \beta, \mathscr A)$, where $\overline\eta$ is the constant from Lemma~\ref{lem_complicated_reproduction_formula}. Then the latter can be applied with $\mathscr E = \mathscr C$ to guarantee that  we can find 
		a constant $C_2 = C_2 (n, v_{0}, \mathscr K, \mathscr B, \mathscr C)$ for which we get the estimate
		\[
		\begin{aligned}
		\frac{\mc I}{\mathscr B} & \leq  \frac{C_2}{t^{n/2}} \!\! \int_0^{ t/8} s^{\beta-1}\, ds  \int_M   \exp \bigg({ - \frac{d^2(x,\cdot)}{\mathscr C t} }\bigg) \ell^{1+\beta} (\cdot ,0) d\mu   = \frac{C_2}{\beta} \cdot \bigg( \frac{t}{8}\bigg)^\beta \cdot \widehat{\ell}^{\, 1+\beta}_{\sqrt{t}} (x)
		\\ & \leq 8^{-\beta} \, C_3 \, \eps^{\frac{\beta}{2}}  \cdot \widehat{\ell}_{\sqrt{t}} (x),
		\end{aligned}
		\]
		where the last inequality follows from \eqref{eq_lem_B2_l_hat_bound} and  $C_3 = C_3(n, \beta, v_{0}, \mathscr K, \mathscr B, \mathscr C)$.

		So combining the latter with  \eqref{eq_ell_bound_2_integrals} and \eqref{eq:first_integral_bound} gives us 
		\begin{align*}
		\ell (\cdot , t)   
		\leq 8 \bigg( \frac{C}{\mathscr B} + C_3 \eps^{\frac{\beta}{2}}  \bigg) \mathscr B \cdot \widehat{\ell}_{\sqrt{t}} \leq \tfrac12 \mathscr B \, \widehat{\ell}_{\sqrt{t}}
		\end{align*}
		provided that we first take $\mathscr B \geq \underline{\mathscr B}(n, v_{0}, \mathscr K)$, after which we choose
		$\eps \leq \overline{\eps}(n, \beta, v_{0}, \mathscr K, \mathscr B, \mathscr C)$. 
		
		By the preceding propositions, the flow extends to $[0,\tau']$
		with \textup{\textbf{(B1)--(B5)}} still valid. After decreasing $\tau'$,
		we may assume $\tau<\tau'<\min\{8\tau,\tau_0\}$.
		In estimating the second integral in
		\eqref{eq_ell_bound_2_integrals}, we used \textup{\textbf{(B6)}} only at
		times $s\leq\frac{t}{8}$. Thus the same argument applies for
		$\tau<t\leq\tau'$, since these earlier times lie below $\tau$,
		and proves \textup{\textbf{(B6)}} on the extension.
	\end{proof}

	\subsection{Closure of the bootstrap and continuation}
	
	We now combine the preceding improvements in an open--closed argument.
	After establishing \textup{\textbf{(B1)--(B6)}} on a short initial interval,
	we use these improvements to extend the flow with the same bounds
	up to the fixed time $\tau_0$. The following theorem is a more detailed
	version of Theorem~\ref{Lp-RF-existence_short}, including the orbifold
	case and the additional geometric and integral estimates.
	
	\begin{thm}\label{Lp-RF-existence}
		Given $2\leq n\in\mathbb N$, $v_0,D,\Upsilon>0$ and
		$\beta\in(0,\frac12)$, there are positive constants
		$v_1,\tau_0,\mathscr K,\mathscr A,\mathscr B,\mathscr C$ depending
		only on $(n,v_0,D)$, a constant $\alpha=\alpha(n,D)\in(0,1]$, and
		$\eps_0,\eta\in(0,1]$ depending only on $(n,v_0,D,\beta,\Upsilon)$
		such that the following holds.
		
		Let $0<\varepsilon\leq\varepsilon_0$, and let $(M^n,g)$ be a complete smooth effective Riemannian orbifold
		with bounded curvature satisfying \textup{\textbf{(A1)--(A3)}}, with this value of
		$\varepsilon$ in \textnormal{\bf (A2)}. Then there is
		a unique complete Ricci flow with bounded curvature and initial
		metric $g$ on $[0,\tau_0]$, satisfying \textup{\textbf{(B1)--(B6)}} with
		$\tau=\tau_0$. All metric and integral conditions are understood
		with respect to the orbifold distance and Riemannian measure.
		The same statement holds when $\ell$ denotes the negative part of
		the lowest eigenvalue of the curvature operator.
	\end{thm}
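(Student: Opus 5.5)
The proof is an open--closed (continuity) argument in the time variable. First I fix all constants in the order of \eqref{order-constants}. Take $\alpha=\alpha(D)\le 1/32$ and $v_1=\frac12\overline v_1(n,v_0,D)$ from Proposition~\ref{prop:parabolic-noncollapsing-improvement}. Then take $\mathscr K\ge\underline{\mathscr K}(n,v_1)$ from Proposition~\ref{prop:upper-curvature-improvement}, and $\mathscr C,\mathscr B$ as in Proposition~\ref{improved_Gaussian}. Next take $\mathscr A\ge\underline{\mathscr A}$ from Proposition~\ref{B5_improve}. After that, choose $\eta$ as the minimum of all the $\overline\eta$'s from Propositions~\ref{prop:parabolic-noncollapsing-improvement}, \ref{prop:upper-distance-improvement}, \ref{improved_Gaussian} and Lemma~\ref{lem_complicated_reproduction_formula}, together with $(n-1)(n-2)\eta^{1-\beta}\le1$ and $2(n-1)\eta\le 1$. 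Finally choose $\eps_0$ as the minimum of the $\overline\eps$'s and $\tau_0\le\overline\tau_0(n,v_0,D,\mathscr K)$.

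Since $g$ has bounded curvature, Shi's theorem gives a complete bounded-curvature Ricci flow on some $[0,T)$, and uniqueness in this class follows from Chen--Zhu. For orbifolds, both facts are obtained by working equivariantly in local uniformizing charts; the effectiveness assumption is used here. Let $\tau^*$ be the supremum of all $\tau<\tau_0$ such that the flow exists on $[0,\tau]$ and satisfies \textbf{(B1)--(B6)} there. The propositions were each stated as producing an extension $[0,\tau']$ on which the corresponding bound survives. Applying them simultaneously (taking the minimum of the $\tau'$'s) shows that the set of good $\tau$ is open. Closedness follows because all the bounds are closed conditions that pass to limits $\tau\nearrow\tau^*$. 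By \textbf{(B1)}, the curvature stays bounded by $\mathscr K/\tau$ near $\tau^*$, so the flow continues past $\tau^*$. Hence $\tau^*=\tau_0$.

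The main obstacle is nonemptiness: establishing \textbf{(B1)--(B6)} on some tiny initial interval $[0,\tau_1]$, where the statements with $1/t$ weights must hold uniformly as $t\to0$. Let $K_0:=\sup|\Rm_g|$.
\begin{itemize}
\item \textbf{(B1)} holds for $t\le\tau_1\ll K_0^{-1}$, since $|\Rm_{g(t)}|\le 2K_0\le\mathscr K/t$.
\item \textbf{(B2)} likewise follows from $\ell\le 2K_0\le\eta/t$ for $t\le\eta/(2K_0)$.
\item \textbf{(B3)} follows for small $t$ from the fact that the metrics are uniformly bilipschitz to $g$ with constant near $1$, together with \textbf{(A1)}.
\item \textbf{(B4)} also follows from the bilipschitz control, since $d_t\le e^{CK_0t}d\le 2Dd$.
\item \textbf{(B5)} holds for $\tau_1$ small because $\ell\le 2K_0$: the integral over $[0,\min\{r^2,\tau_1\}]$ is at most $(2K_0)^{1+\beta}\tau_1 v_0^{-1}r^n\le\mathscr A r^{n-2\beta}$.
\item \textbf{(B6)} is the delicate one, because $\widehat\ell_{\sqrt t}$ can be small while $\ell(x,t)>0$. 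Here I would rerun the argument of Proposition~\ref{improved_Gaussian} directly for short times. The heat-kernel representation \eqref{eq_ell_bound_2_integrals} holds as long as \textbf{(B1)--(B3)} hold, since these give the Gaussian bound of Proposition~\ref{thm:heat}. For the nonlinear term, I replace the use of \textbf{(B6)} by the crude bound $\ell^{1+\beta}\le (2K_0)^{\beta}\ell$ for $t\le\tau_1$, and then re-absorb it with a Gronwall-type argument on the quantity $\sup_{s\le t}\ell/\widehat\ell_{\sqrt s}$. Alternatively, \textbf{(B6)} can be treated via a continuity argument started from its strict improved form, using that both sides are continuous in $t>0$ and that the smallness of $\tau_1K_0$ makes the quadratic term negligible.
\end{itemize}

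Finally, the curvature-operator case is identical. Here $\ell$ is the negative part of the lowest eigenvalue of $\Rm$, and it satisfies the analogous barrier inequality $(\partial_t-\Delta)\ell\le\scal\,\ell+C(n)\ell^2$ because $\mathcal C_{\mathrm{op}\ge0}$ is Hamilton-ODE invariant. Moreover, \eqref{eq:Ricci-lower-from-defect} still holds, so every step goes through with $(n-1)(n-2)$ replaced by $C(n)$.
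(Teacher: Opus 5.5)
Your architecture is the paper's. You fix the constants in the order \eqref{order-constants}, start a bounded-curvature flow, take the maximal interval carrying \textbf{(B1)--(B6)}, and use Propositions~\ref{prop:upper-curvature-improvement}--\ref{improved_Gaussian} together with \textbf{(B1)} for openness and continuation. Your short-time verification of \textbf{(B1)--(B5)} is fine; for \textbf{(B3)} you also need $v_1<v_0$.

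\textbf{The gap: initializing (B6).} Neither of your two suggestions closes this step.
\begin{itemize}
\item Absorbing through $\Theta(t)=\sup_{s\le t}\sup_x\ell(x,s)/\widehat\ell_{\sqrt s}(x)$ gives $\Theta(t)\le C+C(K_0t)^\beta\Theta(t)$. That inequality is only useful once $\Theta(t)<\infty$ is known, and finiteness is essentially the claim itself. As $s\searrow0$, $\widehat\ell_{\sqrt s}(x)\to0$ wherever $\ell(\cdot,0)$ vanishes near $x$. On noncompact $M$, even the supremum in $x$ at fixed $s$ is not a priori finite.
\item The continuity alternative has no base time. Both sides degenerate at $t=0$, and the inequality is pointwise on a possibly noncompact $M$.
\end{itemize}

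The missing step is to linearize the \emph{entire} quadratic term on the initial interval. Since $\ell\le C_nK_0$ there, $(\partial_t-\Delta)\ell\le\scal\,\ell+C_nK_0\ell$. Hence $s\mapsto e^{-C_nK_0s}\int_MG(x,t;\cdot,s)\ell(\cdot,s)\,d\mu_s$ is nonincreasing. Combined with Proposition~\ref{thm:heat} and the H\"older step \eqref{eq:first_integral_bound}, this yields
\[
\ell(x,t)\le e^{C_nK_0t}\int_MG(x,t;y,0)\,\ell(y,0)\,d\mu(y)\le C\,e^{C_nK_0t}\,\widehat\ell_{\sqrt t}(x).
\]
Here $C=C(n,v_0,\mathscr K)$ is independent of $K_0$, because the Gaussian bound uses only \textbf{(B1)} and \textbf{(B3)}. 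Choosing $\mathscr B\ge2C$ in advance and $\tau_1$ with $e^{C_nK_0\tau_1}\le2$ gives \textbf{(B6)}. Your crude bound $\ell^{1+\beta}\le(2K_0)^\beta\ell$ also works, but Gronwall must be applied to $u(s)=\int_MG(x,t;\cdot,s)\ell(\cdot,s)\,d\mu_s$ at fixed $(x,t)$, not to a ratio. There $u'\le(2K_0)^\beta s^{\beta-1}u$ has an integrable coefficient.

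\textbf{Orbifolds.} Equivariant short-time existence is not enough, because Propositions~\ref{prop:upper-curvature-improvement}--\ref{improved_Gaussian} were proved for manifolds, and you must say why they persist. The paper does three things:
\begin{itemize}
\item It notes that \textbf{(A1)} bounds every isotropy group by $|\Gamma|\le\omega_n/v_0$, so estimates in uniformizing charts stay uniform.
\item It redoes the blow-up for \textbf{(B1)} with orbifold Ricci flow compactness and the orbifold vanishing of the asymptotic volume ratio of $\kappa$-solutions \cite{KleinerLottOrbifolds}. Compact limits are excluded by noncollapsing at arbitrarily large radii.
\item It runs the \textbf{(B3)} packing argument on the underlying Alexandrov spaces.
\end{itemize}

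\textbf{Curvature-operator variant.} Invariance of $\mathcal C_{\mathrm{op}\geq0}$ alone does not give coefficient exactly $1$ in front of $\scal\,\ell$. That exact coefficient is what the comparison with $G$ requires, since $G$ solves \eqref{G_conj} with exactly $\scal$. It comes from the computation of \cite[Proposition~2.2]{BamlerCabezasRivasWilking}, the analogue of Proposition~\ref{prop:ell-barrier}.
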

	
	\begin{proof}
		First suppose that $M$ is a manifold. Choose the constants in
		the order specified in  \eqref{order-constants}, with
		$\varepsilon_0$ satisfying all the smallness requirements in the
		preceding propositions, and with $v_1<v_0$ and $\mathscr B$ sufficiently
		large for the estimate below. Since these requirements are monotone
		in the defect parameter, the same choices apply to every
		$0<\varepsilon\leq\varepsilon_0$.
		
		 Bounded-curvature short-time existence gives
		a complete Ricci flow $(M,g(t))_{t\in[0,\tau]}$ with $g(0)=g$.
		If $\tau>0$ is chosen sufficiently small, then
		$|\Rm_{g(t)}|\leq2K_0$, where $K_0:=1+\sup_M|\Rm_g|$, and
		\textup{\textbf{(B1)--(B5)}} hold by \textup{\textbf{(A1)}} and the uniform short-time
		control of curvature, metrics and volume forms.
		
		To establish \textup{\textbf{(B6)}}, we use the same heat-kernel comparison
		and H\"older estimate as in the proof of
		Proposition~\ref{improved_Gaussian}. In the first inequality of
		\eqref{eq_evol_ineq_ell_1t_term}, the bound
		$\ell\leq C_nK_0$ allows us to bound the quadratic term by
		$C_nK_0\ell$. Comparison with the heat kernel and
		\eqref{eq:first_integral_bound} therefore give
		\[
		\ell(x,t)
		\leq e^{C_nK_0t}\int_M G(x,t;y,0)\ell(y,0)\,d\mu(y)
		\leq C e^{C_nK_0t}\widehat\ell_{\sqrt t}(x).
		\]
		Here the heat-kernel bound follows from \textup{\textbf{(B1)}} and
		\textup{\textbf{(B3)}}, and $C$ is independent of $K_0$.
		Taking $\mathscr B\geq2C$ and choosing $\tau$ sufficiently small
		therefore guarantees \textup{\textbf{(B6)}} as well.
		
		We now choose $\tau\leq\tau_0$ maximal such that the flow exists on
		$[0,\tau]$ and satisfies all a priori assumptions \textup{\textbf{(B1)--(B6)}}.
		Such a maximal endpoint is attained because these conditions are
		closed and \textup{\textbf{(B1)}} ensures bounded-curvature continuation at
		any positive finite endpoint. If $\tau<\tau_0$,
		Propositions~\ref{prop:upper-curvature-improvement}--\ref{improved_Gaussian}
		give an extension to $[0,\tau']$ with $\tau<\tau'<\tau_0$ on which
		all a priori assumptions remain valid. This contradicts
		maximality, so $\tau=\tau_0$. The constants are independent of $K_0$;
		only the initial choice of $\tau$ depends on the initial curvature
		bound.
		
		For the curvature-operator version, $\ell$ satisfies
		\eqref{eq:Ricci-lower-from-defect} and the first evolution inequality
		in \eqref{eq_evol_ineq_ell_1t_term} in the barrier sense, with a
		possibly different dimensional coefficient of $\ell^2$; see
		\cite[Proposition~2.2]{BamlerCabezasRivasWilking}.
		The blow-up limits used above again have nonnegative curvature
		operator, so the same proof applies after adjusting dimensional
		constants.
		
		The same argument applies to orbifolds. The short-time theory
		carries over directly, and the local curvature and parabolic
		estimates are applied equivariantly in uniformizing charts, while
		the integral and segment estimates
		use the orbifold Riemannian measure. Moreover, \textup{\textbf{(A1)}} bounds
		the order of every local isotropy group $\Gamma$ by
		$|\Gamma|\leq\frac{\omega_n}{v_0}$, so these arguments remain uniform.
		
		For the blow-up argument in
		Proposition~\ref{prop:upper-curvature-improvement}, orbifold
		Ricci-flow compactness gives a nonflat ancient noncollapsed limit
		with nonnegative curvature operator
		\cite[Proposition~5.5]{KleinerLottOrbifolds}. If the limit is
		noncompact, its positive asymptotic volume ratio contradicts
		\cite[Lemma~6.9]{KleinerLottOrbifolds}; a compact limit is excluded
		by the lower volume bound at arbitrarily large radii. The packing
		and volume-convergence argument for \textup{\textbf{(B3)}} applies to the
		underlying Alexandrov spaces. Thus all the preceding estimates
		and the open--closed argument hold in the orbifold case with the
		same dependence of the constants.
		
		Finally, taking $\tau:=\tau_0$, \textup{\textbf{(B1)}}, \textup{\textbf{(B6)}} and
		\eqref{eq_lem_B2_l_hat_bound} give the two curvature estimates in
		Theorem~\ref{Lp-RF-existence_short}, after increasing
		$C=C(n,v_0,D)$. This proves that theorem as well.
	\end{proof}
	
	\begin{proof}[Proof of Corollary \ref{cor:integral-curvature-gap}]
		Otherwise, choose counterexamples with integral defect tending to
		zero. At $t_*=\frac{1}{2}\min\{\tau,1\}$, the uniform Ricci flow
		estimates give curvature, noncollapsing and injectivity-radius bounds.
		Subdividing initial
		minimizing geodesics into segments of length at most one, the
		distance estimate \eqref{eq:B4} also gives a uniform diameter bound.
		Shi's estimates and smooth compactness yield a subsequential smooth
		limit on a closed manifold. Its curvature lies in the required
		cone because the pointwise defect tends to zero. Pulling back this
		metric by the convergence diffeomorphisms contradicts the choice
		of the sequence.
	\end{proof}

	\section{Construction of the approximators for Petrunin's Conjecture}
	\label{sec:approximators}
	
	In this section we prove Theorem~\ref{thm:smoothing}. We first
	construct smooth orbifold metrics approximating the polyhedral
	space in the Gromov--Hausdorff sense, by inductively smoothing the
	normal links and using their Ricci flows to fill in the successive
	strata. We then apply Theorem~\ref{Lp-RF-existence} on a common time
	interval and pass to a limiting Ricci flow. Its positive-time slices
	provide the desired smoothing.
	
	Let $(P,d_P)$ be a compact Euclidean polyhedral space of dimension
	$n\geq2$, without boundary and with nonnegative Alexandrov curvature. Fix a finite
	polyhedral decomposition of $P$, denote its $k$-skeleton by $P^{(k)}$,
	and set $P^{(-1)}=\emptyset$.
	
	We first choose auxiliary normal charts and compatible scale functions.
	\begin{lemma}\label{lem:face-scales-and-charts}
		For every $k$-face $F$, $0\leq k\leq n-2$, there are a function
		$\varphi_F\in C^0(F)\cap C^\infty(\operatorname{Int}F)$, positive
		on $\operatorname{Int}F$ and zero on $\partial F$, and a normal
		chart
		\begin{equation}\label{eq:face-collar}
		\Psi_F:D_F
		:=
		\{
		(p,r,\theta):
		p\in\operatorname{Int}F,\ 
		0<r<\varphi_F(p),\
		\theta\in L_F
		\}
		\longrightarrow U_F\subset P,
		\end{equation}
		where $L_F$ is the compact spherical polyhedral normal link.
		The map $\Psi_F$ is an open topological embedding into
		$P\setminus P^{(k)}$ and a local isometry for the product of the
		Euclidean metric on $F$ and the cone metric on $C(L_F)$. It extends
		across $r=0$, with the usual collapse of $L_F$, to a homeomorphism
		onto a neighborhood of $\operatorname{Int}F$ in
		$P\setminus P^{(k-1)}$. The charts are induced by the Euclidean
		normal projections, and their images are disjoint for distinct
		$k$-faces.
		
		For every proper face $F'\subset F$, let
		$\Lambda_{F,F'}\subset L_{F'}$ denote the closed spherical normal
		face of $F$ along $F'$. In the chart $\Psi_{F'}$, the face $F$
		corresponds to $\theta\in\Lambda_{F,F'}$, and on the common chart
		domain
		\begin{equation}\label{eq:face-scale-factorization}
		\varphi_F\circ\Psi_{F'}(p,r,\theta)
		=
		r\varphi'_{F,F'}(\theta),
		\end{equation}
		where $\varphi'_{F,F'}$ is smooth and positive on
		$\operatorname{Int}\Lambda_{F,F'}$, continuous on
		$\Lambda_{F,F'}$, and vanishes on its boundary.
		
		If $\partial F\neq\varnothing$, then $\varphi_F$ may be chosen so
		that, with $d_{\partial F}=\operatorname{dist}_F(\,\cdot\,,\partial F)$
		and suitable constants $0<c_\varphi\leq C_\varphi<\infty$,
		\begin{equation}\label{eq:face-scale-distance}
		c_\varphi d_{\partial F}
		\leq
		\varphi_F
		\leq
		C_\varphi d_{\partial F}
		\qquad\text{on }\operatorname{Int}F.
		\end{equation}
	\end{lemma}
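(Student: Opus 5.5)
The plan is to build the charts from local polyhedral geometry, then choose the scale functions by induction on face dimension so that the factorization \eqref{eq:face-scale-factorization} holds by construction.

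\emph{Charts.} Fix a $k$-face $F$ and a point $p\in\operatorname{Int}F$. By the conical structure recalled in Section~\ref{sec:2}, a neighborhood of $p$ is isometric to a neighborhood of the vertex in $\mathbb R^k\times C(L_F)$. The link $L_F$ is the same at every interior point, since it is assembled from the spherical normal faces of the incident simplices. For each incident $n$-simplex $\Delta\supset F$, let $\pi_\Delta$ be the Euclidean orthogonal projection onto the affine span of $F$. Define $\Psi_F(p,r,\theta)=p+r\theta$, where $\theta$ is read as a unit normal vector in the simplex carrying it. These formulas agree on shared faces, because the gluing maps are isometries fixing $F$ pointwise. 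For $r$ smaller than the distance from $p$ to $\partial F$, multiplied by a constant determined by the finitely many dihedral angles, the map is well defined, injective, a local isometry onto its image, and avoids $P^{(k)}$. Collapsing $L_F$ at $r=0$ gives the stated homeomorphism onto a neighborhood of $\operatorname{Int}F$ in $P\setminus P^{(k-1)}$. Shrinking by a uniform factor makes the images for distinct $k$-faces disjoint; this works because those faces meet only along $P^{(k-1)}$, where $\varphi_F$ vanishes.

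\emph{Scale functions.} I will choose the $\varphi_F$ by induction on $k=\dim F$. For $k=0$, $F$ is a point and $\varphi_F$ is a small constant. For $k\ge1$, the only requirement beyond positivity, smallness and vanishing on $\partial F$ is \eqref{eq:face-scale-factorization}. This says that near each proper face $F'$, $\varphi_F$ is homogeneous of degree one in the normal radius $r$ of $F'$ and independent of the base point $p\in F'$.

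I would first construct a cone-compatible replacement for the distance $d_{\partial F}$ on the Euclidean polytope $F$. Take a partition of unity on $F$ subordinate to the charts $\Psi_{F'}\restrict{F}$, running over the faces $F'\subset\partial F$ and ordered from vertices upward, together with a compactly supported piece in $\operatorname{Int}F$. On the chart of $F'$, use the function $r\,\varphi'_{F,F'}(\theta)$. Here $\varphi'_{F,F'}$ is defined on the spherical polytope $\Lambda_{F,F'}$, which has dimension $\dim F-\dim F'-1<k$, by the same construction applied recursively to spherical polytopes. The main obstacle is that a plain partition of unity destroys exact homogeneity in $r$, and these local functions must also agree on chart overlaps.

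To handle this, I would make the cutoffs themselves conical: on the chart of $F'$, each cutoff should depend only on $\theta$ and on $r/\varphi_{F'}(p)$. I would then define $\varphi_F$ stratum by stratum, going downward from the interior. Within the region where $r<\tfrac12\varphi_{F'}(p)$, set $\varphi_F:=r\,\varphi'_{F,F'}(\theta)$, with $\varphi'_{F,F'}$ chosen as the restriction to the link of the function already defined near higher faces. This is consistent because the chart of a larger face $F''\supset F'$ is itself conical in $\Psi_{F'}$-coordinates.

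Continuity on $F$, smoothness and positivity in the interior, and vanishing on $\partial F$ then follow. The comparison \eqref{eq:face-scale-distance} follows from compactness together with homogeneity: in each chart, $d_{\partial F}$ equals $r$ times the spherical distance from $\theta$ to $\partial\Lambda_{F,F'}$. The inductive bound on $\varphi'_{F,F'}$ compares $\varphi'_{F,F'}$ with exactly that spherical distance.
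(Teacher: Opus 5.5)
Your outline is sound and uses the paper's ingredients: Euclidean normal charts, the homogeneous prescription $\varphi_F=r\,\varphi'_{F,F'}(\theta)$ near each proper face with compatible angular functions, an interior cutoff, small multiplicative constants, and \eqref{eq:face-scale-distance} from homogeneity and compactness. The real difference is the order of induction, and this decides where the factorization holds.

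The paper inducts \emph{downward} in $\dim F$. It builds $\varphi_F$ first, and later shrinks each lower-dimensional $\varphi_{F'}$ by a constant until all of $D_{F'}$ lies in the region where $\varphi_F$ is already homogeneous. So \eqref{eq:face-scale-factorization} holds on the full common chart domain, using only angular cutoffs and an interior cutoff away from $\partial F$.

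You induct \emph{upward}, so $D_{F'}$ is fixed before $\varphi_F$ exists. Your radial cutoffs $\chi(r/\varphi_{F'}(p))$ are then essential, and as you set them up they give the factorization only on $\{r<\frac12\varphi_{F'}\}$, not on $D_{F'}=\{r<\varphi_{F'}\}$ as the lemma requires. Fix this with a final common rescaling of all $\varphi_G$ by a factor $\le\frac12$; the paper rescales at the end anyway, for disjointness. This preserves homogeneity, positivity, vanishing on $\partial F$, and \eqref{eq:face-scale-distance}.

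In return, your order inside a fixed $F$ has an advantage. You take codimension-one faces of $F$ first and go downward, with $\varphi'_{F,F'}$ prescribed near each face $\Lambda_{F'',F'}$ of $\Lambda_{F,F'}$ ($F'\subsetneq F''\subsetneq F$) by data already built and only extended inside. This follows the direction in which the compatibility constraints actually propagate. It also makes explicit the recursion on spherical polytopes that the paper's ``increasing induction on the dimension of its proper faces'' leaves implicit.

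Two details to record when writing it out:
\begin{itemize}
\item The $F''$-homogeneous region $\{r_{F''}<\frac12\varphi_{F''}\}$ is conical in $\Psi_{F'}$-coordinates only where $\varphi_{F''}$ factorizes, i.e.\ inside $\{r_{F'}<\frac12\varphi_{F'}\}$. So in the transition shell, $\varphi'_{F,F'}$ must match the $F''$-data on a slightly enlarged angular set. Keep the earlier scale functions small relative to $d_{\partial F''}$ so that these sets for different $F''$ do not collide.
\item Disjointness of the tubes of distinct $k$-faces uses the linear bound $\varphi_F\le C_\varphi d_{\partial F}$, not just vanishing of $\varphi_F$ on $\partial F$.
\end{itemize}
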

	
	\begin{proof}
		For each face $F$, let
		\[
		\Psi'_F:D'_F\longrightarrow U'_F\subset P
		\]
		be the normal exponential chart on a neighborhood of the cone axis
		in $\operatorname{Int}F\times C(L_F)$. We construct the functions
		$\varphi_F$ by descending induction on the dimension of $F$, and
		then define $\Psi_F$ as the restriction of $\Psi'_F$ to
		$D_F=\{0<r<\varphi_F\}$.
		
		Fix a face $F$ and suppose that the functions associated with all
		higher-dimensional faces have already been constructed. We define
		$\varphi_F$ near $\partial F$ by increasing induction on the
		dimension of its proper faces. For each proper face
		$F'\subset\partial F$, choose a positive angular function
		$\varphi'_{F,F'}$ on
		$\operatorname{Int}\Lambda_{F,F'}$, compatible with the functions
		already prescribed near the smaller incident faces. Such functions
		are obtained by extending the boundary prescriptions smoothly and
		using angular cutoffs which leave them unchanged. In the normal
		coordinates associated with $F'$, set
		\[
		\varphi_F=r\varphi'_{F,F'}(\theta).
		\]
		The compatibility of the angular functions makes these homogeneous
		definitions agree on overlaps.
		
		After all proper faces have been treated, extend $\varphi_F$
		positively over the remaining interior of $F$ using a cutoff
		supported away from the boundary, and set $\varphi_F=0$ on
		$\partial F$. For a vertex, choose $\varphi_F$ to be a positive
		constant.
		
		At each stage, multiply $\varphi_F$ by a sufficiently small positive
		constant so that $D_F\subset D'_F$ and all previously constructed
		factorizations hold throughout $D_F$. The compatible homogeneous
		descriptions near the boundary and compactness away from it ensure
		that this is possible. Thus
		\eqref{eq:face-scale-factorization} holds throughout every final
		chart. Since the face lattice is finite, the same homogeneous descriptions
		give \eqref{eq:face-scale-distance} with uniform constants
		$c_\varphi$ and $C_\varphi$. They also imply that, for every $j\geq1$,
		\[
		\bigl|(\nabla^F)^j\varphi_F\bigr|_{g_F}
		\leq
		C_j d_{\partial F}^{\,1-j}
		\qquad\text{on }\operatorname{Int}F,
		\]
		with constants uniform over the finitely many faces.

		Finally, multiplying all functions $\varphi_F$ by a sufficiently
		small common constant makes $U_F$ and $U_E$ disjoint whenever neither
		face contains the other. This preserves
		\eqref{eq:face-scale-factorization} and
		\eqref{eq:face-scale-distance}, and in particular gives the required
		disjointness for distinct faces of the same dimension.
	\end{proof}
	
	Our smoothing construction depends on parameters
	\[
	\mathbf a=(a_{n-2},\ldots,a_0),
	\qquad
	0<a_0\le\cdots\le a_{n-2}\le1.
	\]
	The parameter $a_k$ governs
	the scale at which we smooth the $k$-skeleton and is chosen
	sufficiently small after $a_{n-2},\ldots,a_{k+1}$ have been fixed.
	We start with the flat metric $g_{n-1}$ on
	$M_{n-1}:=P\setminus P^{(n-2)}$, whose metric completion is
	$(P,d_P)$. We then smooth the remaining strata in decreasing
	order of dimension. For $k=n-2,\ldots,0$, we construct a smooth
	structure and a Riemannian metric
	\[
	g_k=g_{(a_{n-2},\ldots,a_k),k}
	\quad\text{on}\quad M_k:=P\setminus P^{(k-1)}.
	\]
	Hereafter, smooth structures, metrics, and maps are understood in
	the effective orbifold sense when necessary.
	
	We suppress the parameter dependence below. The smooth structure on
	$M_k$ restricts to the previously constructed one on $M_{k+1}$, and
	the underlying inclusion
	\[
	M_{k+1}\hookrightarrow M_k,
	\]
	viewed as a map between subsets of $P$, is independent of the
	parameters. Before smoothing the $k$-faces, we assume the following properties
	of $(M_{k+1},g_{k+1})$.
	\begin{itemize}
		\item The natural inclusion $M_{k+1}\hookrightarrow P$ extends to a
		homeomorphism from the metric completion of $(M_{k+1},g_{k+1})$
		onto $P$. We denote the resulting distance on $P$ by
		$d_P^{k+1}$. In particular, $d_P^{k+1}$ induces the original topology of $P$.
		\item For every $d$-face $F$ with $d\leq k$, a neighborhood of
		$\operatorname{Int}F$ in $(P,d_P^{k+1})$ is locally isometric to
		a Cartesian product of $\mathbb R^d$ with a cone.
		More precisely, the map $\Psi_F$ is a local isometry from
		\[
		D_F\subset (\operatorname{Int}F,g_F)\times C(L_F,d_F^{k+1})
		\]
		onto the corresponding punctured neighborhood of
		$\operatorname{Int}F$. Here $g_F$ is the Euclidean face metric
		and $d_F^{k+1}$ is a distance on $L_F$.
		
		\item If $d=k$, then $d_F^{k+1} =d_{\widehat g_F}$
		for a smooth Riemannian orbifold metric $\widehat g_F$ on $L_F$, and
		\begin{equation}\label{eq:face-metric-form}
		\gamma_F:=\Psi_F^*g_{k+1}
		=g_F+dr^2+r^2\hat g_F.
		\end{equation}
	\end{itemize}
	The initial flat metric $(M_{n-1},g_{n-1})$ has these properties.
	Now let $0\leq k\leq n-2$ and suppose that $(M_{k+1},g_{k+1})$
	has been constructed. For each $k$-face $F$, we will modify $\gamma_F$
	on $\{r<a_k\varphi_F\}$ to a metric $\widetilde\gamma_F$ 
	such that
	$(\Psi_F^{-1})^*\widetilde\gamma_F$ extends smoothly over
	$\operatorname{Int}F$.

	If $k=n-2$, the normal link metric is
	$\hat g_F=\kappa_F^2d\theta^2$, so the cone in
	\eqref{eq:face-metric-form} is taken over a circle of length
	$2\pi\kappa_F$, where $0<\kappa_F\leq1$ by the nonnegative
	Alexandrov curvature of $P$ (see e.g. \cite[Theorem 4.2.14]{BBI}).
	We smooth it by replacing a small neighborhood of the vertex in
	each normal cone with a cap at scale $a_k\varphi_F$.
	In terms of the metric, this takes the following form.
	Choose a smooth nondecreasing
	function $A_F$ equal to $\kappa_F^2$ on $[0,\frac{1}{4}]$ and to $1$ on
	$[\frac{3}{4},\infty)$, and set
	\begin{equation}\label{eq:codimension-two-cap}
	\widetilde\gamma_F
	=g_F+dr^2+\kappa_F^2r^2d\theta^2
	+(A-1)\frac{a_k^2(\varphi_F\,dr-r\,d\varphi_F)^2}{a_k^2\varphi_F^2+r^2},
	\qquad A=A_F\left(\frac{r}{a_k\varphi_F}\right).
	\end{equation}
	This agrees with $\gamma_F$ for $r\geq\frac{3}{4}a_k\varphi_F$ and
	extends smoothly over $F$ at $r=0$. Since $|d\varphi_F|_{g_F}$ is
	uniformly bounded, choosing $a_k$ sufficiently small gives
	$a_k|d\varphi_F|_{g_F}\leq1$, which ensures positive definiteness. Indeed, a direct computation of the radial--face block shows that
	its determinant is positive under this condition. 
	The factorization in Lemma~\ref{lem:face-scales-and-charts} makes
	this correction purely angular in the normal cone at every
	incident lower face, as we will verify below.
	
	If $k<n-2$, consider the maximal Ricci flow $\hat g_F(t)$,
	$t\in[0,T_F)$, on $L_F$, with $\hat g_F(0)=\hat g_F$.
	Suppose that this flow develops
	a round singularity at its finite final time. That is, $L_F$ is a
	quotient of $S^{n-k-1}$ by a finite group and the flow converges,
	after rescaling, to a round metric as $t\nearrow T_{F}$.
	Otherwise, we stop and say that the construction has failed for
	this choice of parameters. We use this flow to construct a smooth
	family of metrics $h_F(s)$, $s\in[0,1]$, on $L_F$ with the
	following properties:
	\begin{itemize}
		\item If $s\in[0,0.1]$, then $h_F(s)=h_F(0)$ has constant
		sectional curvature $1$.
		\item For $s\in[0,0.3]$, the metric $h_F(s)$ satisfies
		\[
		\Rm_{h_F(s)}-\mathscr I_{h_F(s)}
		\in\mathcal C_{\mathrm{geom}\geq0},
		\]
		and is $a_{k+1}$-close to $h_F(0)$ in the $C^\infty$ topology.
		\item If $s\in[0.2,1]$, then
		$h_F(s)=c_F(s)\hat g_F(t_F(s))$, where $c_F(s)>0$ and
		$t_F(s)\in[0,T_F)$ are smooth.
		\item If $s\in[0.4,1]$, then $h_F(s)=\hat g_F$.
	\end{itemize}
	Here closeness is measured using a fixed distance inducing the
	$C^\infty$ topology, after identifying the round endpoint with
	a fixed unit-round metric. All these choices depend only on
	$a_{n-2},\ldots,a_{k+1}$ and are made before choosing $a_k$.
	We take $c_F(s)=1$ whenever $t_F(s)\leq\tau_F$, where
	$\tau_F>0$ is independent of the smoothing parameters.
	Whenever $c_F(s)\neq1$, we require the same shifted curvature
	condition as above. We normalize near extinction so that the metrics become unit round
	in the limit; the portions with $t_F(s)\geq\tau_F$ are chosen in a
	smoothly precompact family, up to pullback. On each compact parameter
	family, we choose these pullbacks continuously and use the resulting
	fixed identifications as the gauges for the paths $h_F$. The
	finite-order bounds used below include derivatives in $s$ in these
	gauges. The uniform choices will be justified in Lemma~\ref{lem:approximator-properties}.
	We now set
	\begin{equation}\label{eq:logarithmic-link-interpolation}
	\widetilde\gamma_F=g_F+dr^2+r^2
	\begin{cases}
	h_F\bigl((\frac{r}{a_k\varphi_F})^{a_k}\bigr),
	&0<r\leq\frac{1}{2}a_k\varphi_F,\\
	\hat g_F,&\frac{1}{2}a_k\varphi_F<r<a_k\varphi_F.
	\end{cases}
	\end{equation}
	The two formulas agree near $r=\frac{1}{2} a_k \varphi_F$, and
	$\widetilde\gamma_F=\gamma_F$ outside $\{ r< \frac{1}{2} a_k \varphi_F\}$.
	On $\{r<a_k(\frac{1}{10})^{\frac{1}{a_k}}\varphi_F\}$,
	the angular metric is fixed and unit round, so
	$\widetilde\gamma_F$ extends smoothly over $F$, possibly as an orbifold metric.
	
	In either case, use $\widetilde\gamma_F$ in the disjoint face tubes,
	retain $g_{k+1}$ elsewhere, and extend over the cone axes. This
	constructs $(M_k,g_k)$.
	For fixed parameters, the new and old metrics are uniformly
	comparable on $M_{k+1}$: in the second case this follows from the
	compact smooth family $h_F(s)$. Since the filled faces have
	codimension at least two, curves can be approximated by curves
	avoiding them. Induction therefore identifies the metric completion
	with $P$, defining $d_P^k$. The comparison constants here may depend
	on the parameters.
	
	The modifications preserve the exact product-cone structure at every
	remaining face. Indeed, let $\mathcal V$ denote the radial dilation
	field on the normal cone of an incident lower-dimensional face. By
	\eqref{eq:face-scale-factorization}, both $r$ and $\varphi_F$ are
	homogeneous of degree one with respect to $\mathcal V$. Hence
	\[
	(\varphi_F\,dr-r\,d\varphi_F)(\mathcal V)=0,
	\qquad
	\mathcal V\left(\frac{r}{\varphi_F}\right)=0.
	\]
	Thus the correction in \eqref{eq:codimension-two-cap} is angular and
	homogeneous of degree two. The same holds for
	\eqref{eq:logarithmic-link-interpolation}, since its link parameter
	depends only on $r/\varphi_F$ and its data are independent of the
	point on the lower face. Consequently, after each stage, every
	unfilled face still has a neighborhood which is an exact product of
	the face with a metric cone.
	
	We next quantify the freezing of the angular metric. Fix a stage
	$k<n-2$, a $k$-face $F$, and the preceding parameter
	tuple $\mathbf b=(a_{n-2},\ldots,a_{k+1})$. Write $a=a_k$ and set
	\begin{equation} \label{def_s}
	s=\left(\frac{r}{a\varphi_F}\right)^a
	\end{equation}
	on the modified part of the normal tube. Let $\nabla^F$ denote the
	Levi--Civita connection of the face metric $g_F$. In the present cone
	coordinates, $\mathcal V=r\partial_r$, and direct differentiation
	gives
	\[
	\mathcal V s=as,
	\qquad
	\mathcal V^2s=a^2s,
	\qquad
	r\nabla^Fs
	=
	-as\,\frac{r}{\varphi_F}\nabla^F\varphi_F,
	\]
	and
	\[
	r^2(\nabla^F)^2s
	=
	a(a+1)s\frac{r^2}{\varphi_F^2}
	d\varphi_F\otimes d\varphi_F
	-
	as\frac{r^2}{\varphi_F}(\nabla^F)^2\varphi_F.
	\]
	The higher derivatives have the same structure: every derivative
	falling on $s$ produces a factor $a$, while the remaining terms are
	scale-invariant combinations of covariant derivatives of
	$\varphi_F$ with respect to $g_F$.
	
Let $Q$ be a product box in the modified tube, centered at
$(p_0,r_0,\theta_0)$, whose face projection has diameter at most
$Cr_0$, whose radial coordinate lies in $[cr_0,Cr_0]$, and whose
angular component lies in a fixed controlled link chart. If $s_0$ is the value of $s$ at the center
	of $Q$, set
	\[
	\gamma_{F,s_0}^{\mathrm{fr}}
	=
	g_F+dr^2+r^2h_F(s_0).
	\]
	In the following estimates, $\nabla$ and the tensor norms are taken
	with respect to $\gamma_{F,s_0}^{\mathrm{fr}}$. For every $N\geq2$ there is a constant
	$C_N(\mathbf b)<\infty$ such that
	\begin{equation}\label{eq:freezing-metric}
	\sum_{j=0}^{N}
	r_0^j
	\left|
	\nabla^j
	\bigl(
	\widetilde\gamma_F-\gamma_{F,s_0}^{\mathrm{fr}}
	\bigr)
	\right|
	\leq
	C_N(\mathbf b)a,
	\end{equation}
	and
	\begin{equation}\label{eq:freezing-curvature}
	\sum_{j=0}^{N-2}
	r_0^{j+2}
	\left|
	\nabla^j
	\bigl(
	\Rm_{\widetilde\gamma_F}
	-
	\Rm_{\gamma_{F,s_0}^{\mathrm{fr}}}
	\bigr)
	\right|
	\leq
	C_N(\mathbf b)a.
	\end{equation}
	
	After rescaling the metric by $r_0^{-2}$, the boxes $Q$ have
	uniformly controlled size. Since $Q$ lies in the modified region and
	$r$ is comparable to $r_0$, one has
	$
	r_0\leq Ca\varphi_F
$
	throughout $Q$. The homogeneous description in Lemma~\ref{lem:face-scales-and-charts}, together
	with \eqref{eq:face-scale-distance} and the derivative bounds above,
	therefore gives, for every fixed $j\geq1$,
	\[
	\frac{r_0}{\varphi_F}
	+
	\frac{r_0^j
		|(\nabla^F)^j\varphi_F|_{g_F}}{\varphi_F}
	+
	r_0^j
	|(\nabla^F)^j\log\varphi_F|_{g_F}
	\leq
	C_j.
	\]
	Thus all scale-invariant combinations of derivatives of
	$\varphi_F$ arising from the chain rule are uniformly controlled on
	$Q$. Moreover, for 
	$
	s
	$ as in \eqref{def_s} every derivative 
	contains a factor $a$. Together with the uniform finite-order bounds
	for the paths $h_F$ in the chosen gauges, the chain rule and the
	curvature formulas for families of cone metrics give
	\eqref{eq:freezing-metric}--\eqref{eq:freezing-curvature}, uniformly
	over the faces at stage $k$.

	We finally choose the smoothing parameters quantitatively. Fix
	functions $\omega_k(u)\searrow0$ and $N_k(u)\nearrow\infty$ as
	$u\searrow0$. For fixed $u>0$, the admissible preceding tuples
	$\mathbf b=(a_{n-2},\ldots,a_{k+1})$ with $a_{k+1}=u$ range in a
	relatively compact parameter set. The corresponding link metrics,
	interpolation paths, and constants in
	\eqref{eq:freezing-metric}--\eqref{eq:freezing-curvature} are
	therefore uniformly bounded. Hence there is a positive threshold
	$\tau_k(u)$ such that $a_k\leq\tau_k(u)$ makes the freezing errors
	at most $\omega_k(u)$ through order $N_k(u)$, simultaneously for
	all faces at stage $k$.
	
	Replacing $\tau_k$ by a positive continuous minorant, we obtain a
	continuous function
	$\overline a_k:(0,\overline a_{n-2}]\to(0,\infty)$, with
	$\overline a_k(u)\leq u$, such that
	\begin{equation}\label{eq:freezing-parameter-choice}
	0<a_k\leq\overline a_k(a_{k+1})
	\end{equation}
	has the preceding property. These functions depend only on $P$ and
	the fixed auxiliary choices and are independent of the exponent used
	later in the Morrey estimate.
	
	The same compact-parameter argument is used to choose the paths
	$h_F$ on the fixed link structures. For fixed $a_{k+1}=u>0$, the
	admissible preceding tuples lie in a compact family of successful
	constructions. Normalized-flow stability near the round limit and a
	small rescaling provide the required interpolation to a continuously
	chosen unit-round endpoint, with a strict margin in the shifted cone
	condition. Thus the paths depend continuously on the preceding
	parameters in the chosen gauges, with uniform finite-order bounds,
	including derivatives in $s$, as used in
	\eqref{eq:freezing-metric}--\eqref{eq:freezing-curvature}. No
	canonical choice of the round endpoint is required.

	If all required link flows exist and become round, iteration
	completes the construction and gives
	\[
	M=M_0=P,
	\qquad
	g_{\mathbf a}=g_0,
	\qquad
	d_{g_{\mathbf a}}=d_{g_{\mathbf a}}=d_P^0.
	\]
	Here $M=M_0=P$ denotes an identification of the underlying
	topological spaces; $M$ carries the resulting smooth effective
	orbifold structure. All objects constructed above depend on
	$\mathbf a=(a_{n-2},\ldots,a_0)$. We use the choices just described
	throughout the remainder of the section.
	
	\begin{lemma}\label{lem:approximator-properties}
		Fix $(P,d_P)$ and consider the metrics
		$g_k=g_{(a_{n-2},\ldots,a_k),k}$ constructed above.
		There are a constant $\overline a_{n-2}>0$ and continuous
		functions $\overline a_k:(0,\overline a_{n-2}]\to(0,\infty)$,
		$0\leq k<n-2$, with $\overline a_k(a)\leq a$, depending only
		on $P$ and the fixed auxiliary choices, such that the
		construction of $g_{\mathbf a,k}$ does not fail whenever
		$\mathbf a=(a_{n-2},\ldots,a_k)$ satisfies
		\begin{equation}\label{eq:admissible-smoothing-parameters}
		0<a_{n-2}\leq\overline a_{n-2},
		\qquad
		0<a_j\leq\overline a_j(a_{j+1})
		\quad(k\leq j<n-2).
		\end{equation}
		Moreover, let $\mathbf a_i=(a_{n-2,i},\ldots,a_{k,i})\to0$
		satisfy \eqref{eq:admissible-smoothing-parameters}, put
		$g_i=g_{\mathbf a_i,k}$, and choose $x_i\in M_k$ and
		$0<r_i\leq1$ with $r_i\to r_\infty\in[0,1]$.
		After passing to a subsequence, the metric completions of
		$(M_k,r_i^{-2}g_i,r_i^{-n}d\mu_{g_i},x_i)$ converge in the
		pointed measured Gromov--Hausdorff sense, and the following holds.
		
		\begin{enumerate}[label=\textnormal{(\roman*)}]
			
			\item\label{item:approximator-models}
			If $r_\infty>0$, the limit is
			$(P,r_\infty^{-1}d_P,r_\infty^{-n}\mathcal H_P^n,x_\infty)$
			for some $x_\infty\in P$. If $r_\infty=0$, the following models can occur:
			
			\begin{enumerate}[
				label=\textnormal{(\alph*)},
				ref=\textnormal{(\roman{enumi})(\alph*)}
				]
				
				\item\label{item:blowup-caps}
				A tangent cone of $P$, or a Euclidean factor times a complete
				two-dimensional manifold of nonnegative curvature, asymptotic
				to the normal tangent cone of a codimension-two face of $P$.
				
				\item\label{item:blowup-cones}
				A Euclidean factor times a cone over a smooth limiting link
				metric,
				\begin{equation}\label{eq:frozen-link-model}
				\mathbb R^{\dim F}\times C(L_F,\hat g_\infty) \qquad \text{with} \quad k\leq\dim F<n-2.
				\end{equation}
				Here  $(L_F,\hat g_\infty)$ is a smooth
				pointed Cheeger--Gromov limit, modulo parabolic rescaling, of
				positive-time slices of the Ricci flows starting from the link
				metrics $\hat g_{\mathbf a_i,F}$. The cone metric $g_{\mathbb R^{\dim F}}+dr^2+r^2\hat g_\infty$ has
				nonnegative cosectional curvature.
				
				\item\label{item:blowup-inherited}
				A positive-time slice of a Ricci flow emerging from a model in
				{\rm(a)} or {\rm(b)} (in the sense of
				Definition~\ref{def:flow-from-metric-space}) and having
				nonnegative cosectional curvature.
				
				\item\label{item:blowup-flat}
				A flat manifold or flat orbifold quotient arising when the
				basepoint in one of the conical models escapes to infinite
				radial distance.
				
			\end{enumerate}
			
			\noindent Convergence is locally smooth in the Cheeger--Gromov sense away
			from the limiting strata inherited from $P^{(n-2)}$ in
			\ref{item:blowup-caps}, away from the cone axis
			$\mathbb R^{\dim F}\times\{o\}$ in
			\ref{item:blowup-cones}, and everywhere in
			\ref{item:blowup-inherited} and \ref{item:blowup-flat}, where
			$o$ denotes the cone vertex. If $r_\infty>0$, convergence is
			locally smooth on $P\setminus P^{(n-2)}$.
			
			\item\label{item:approximator-estimates}
			There is a constant $v_0=v_0(P)>0$ such that, for all sufficiently
			large $i$, if
			$B_{g_i}(x_i,2r_i)\subset M_k$, with balls taken in the metric
			completion, then \textnormal{\bf (A1)} holds for $x=x_i$ and $r=r_i$
			with this same constant $v_0$. For every fixed
			$\alpha\in(0,1]$, the generalized
			$(\alpha,D,\Upsilon)$-segment inequality holds on $(M_k,g_i)$, for
			all pairs at distance at most one, with constants $D<\infty$ and
			$\Upsilon=\Upsilon(\alpha)<\infty$ independent of $i$. For every
			fixed $\beta\in(0,\frac12)$, \textnormal{\bf (A2)} also holds with these
			choices and with $\varepsilon_i=\varepsilon_i(\beta)\to0$.
			
			\item\label{item:approximator-flows}
			If $k=0$, the flows starting from $g_{\mathbf a_i,0}$ exist on a uniform time
			interval and subconverge smoothly for $t\in(0,T)$ to a Ricci flow
			emerging from $P$.
			
		\end{enumerate}
	\end{lemma}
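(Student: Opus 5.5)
I would prove all four parts together by descending induction on $k$, from $k=n-2$ down to $0$. The inductive hypothesis is the full statement of the lemma for the lower-dimensional polyhedral spaces arising as normal links and their Euclidean cones. The links $L_F$ of $k$-faces are compact spherical polyhedral spaces of dimension $n-k-1$ with curvature $\geq1$. At stage $k$, the link metrics $\hat g_F=\hat g_{\mathbf a,F}$ are precisely the metrics produced by the construction on $C(L_F)$ at its deeper strata, restricted to the unit link.

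\textbf{Non-failure.} The key point is that the link flows exist and become round. By the inductive version of part \ref{item:approximator-estimates}, the cone metrics $dr^2+r^2\hat g_F$ satisfy \textbf{(A1)--(A3)} with $\varepsilon\to0$. Theorem~\ref{Lp-RF-existence} then gives, at a uniform positive time, curvature of the flow $\hat g_F(t)$ satisfying $\Rm-\mathscr I\in\mathcal C_{\mathrm{geom}\geq0}$ up to an error $C\sqrt\varepsilon/t$, with bounded geometry. Here I would use the link curvature $\geq1$, so that the cone is nonnegatively curved, and transfer the estimate from the cone flow to the link flow.

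From this shifted cone condition with a strict margin, the convergence theorems of Hamilton and B\"ohm--Wilking \cite{BW2,HamSurfaces} imply round extinction. This is where $\overline a_{n-2}$ and the continuous thresholds $\overline a_k$ enter. Continuity in the preceding parameters and compactness of admissible tuples for fixed $a_{k+1}=u$ justify the uniform choice of paths $h_F$ asserted earlier.

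\textbf{Blow-up classification.} I would split according to how $r_i$ compares with the filling scales $a_j\varphi_F$ at the basepoint. If $r_\infty>0$, all modifications occur in regions whose size tends to $0$, so the limit is $P$, with smooth convergence off $P^{(n-2)}$. If $r_\infty=0$, I would look at the basepoint's normal distance $\rho_i$ to the nearest face and compare $r_i$ with $\rho_i$ and with the local filling scale:
\begin{itemize}
\item If $r_i\ll$ every scale, the limit is flat, giving case (d), or a tangent cone.
\item If $r_i$ is comparable to a codimension-two cap, the limit is the cap model (a).
\item If $r_i$ lies in the logarithmic interpolation region, the freezing estimates \eqref{eq:freezing-metric}--\eqref{eq:freezing-curvature} with $a_k\to0$ show the limit is a frozen cone $C(L_F,h_F(s_\infty))$. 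Since $h_F(s)$ is a rescaled link flow, this gives (b), with nonnegative cosectional curvature from the shifted condition.
\item If $r_i$ is comparable to the inner round region, the induction on the link flows yields (c).
\end{itemize}
Because the scales are separated by \eqref{eq:freezing-parameter-choice}, only one transition is visible in any limit.

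\textbf{Uniform estimates.} I would prove \textbf{(A1)}, the segment inequality and \textbf{(A2)} by contradiction with compactness. Suppose a sequence violates them. Passing to a blow-up limit yields one of the models above, all of which are noncollapsed with volume ratios controlled by $P$, and \textbf{(A1)} follows by volume convergence.

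For the segment inequality, on each model (Alexandrov, or nonnegatively curved where smooth) the Cheeger--Colding segment inequality holds for pairs whose endpoints are perturbed into $\alpha r$-balls avoiding the singular strata, which have codimension $\geq2$. The resulting curve families are transplanted to the approximants by smooth convergence away from the strata, with endpoint slack absorbing the perturbation.

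For \textbf{(A2)}, the defect $\ell$ vanishes on every model. In the frozen regions it is $O(a_k)r^{-2}$ pointwise by \eqref{eq:freezing-curvature}. Near the small caps and fillings, the region where $\ell$ is large has volume fraction tending to zero, since the scales separate. I expect the \textbf{main obstacle} to be the Morrey sum across infinitely many nested scales: the contribution of a ball of radius $r$ must be controlled by summing over the filling layers inside it. Here I would use the exponent $\beta<\frac12$ together with dimension at least $2$ in the codimension, so that $r^{n-2(1+\beta)}$ dominates the layer contributions geometrically.

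\textbf{Part (iii).} For $k=0$, Theorem~\ref{Lp-RF-existence} gives a uniform time interval and the bounds \textbf{(B1)--(B6)}. Shi's estimates then give smooth subconvergence on $(0,T)$. The distance bounds \textbf{(B4)}, together with Hamilton's lower distortion bound, give Gromov--Hausdorff convergence of the flow to $P$ as $t\searrow0$, and smooth convergence on the regular part follows from local pseudolocality-type estimates away from $P^{(n-2)}$.
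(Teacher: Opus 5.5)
Your overall architecture matches the paper: descending induction on $k$, comparison of $r_i$ with the filling scales via \eqref{eq:freezing-metric}--\eqref{eq:freezing-curvature}, and compactness for \textbf{(A1)} and the segment inequality. The non-failure step, however, has a genuine gap.

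You propose to apply Theorem~\ref{Lp-RF-existence} to the cone $dr^2+r^2\hat g_F$ and then transfer the conclusion to the link flow. This does not work. The cone over a non-round link has a singular vertex and curvature of order $r^{-2}$, so it is not a complete orbifold with bounded curvature. More fundamentally, the Ricci flow of $C(L_F,\hat g_F)$ is not the cone over $\hat g_F(t)$, so there is nothing to transfer. Applying the theorem to the link itself does not help either. It controls only the unshifted defect, giving $\ell_{\hat g_F(t)}\le C\sqrt{\varepsilon}/t$, i.e.\ almost nonnegative cosectional curvature, and says nothing about $\Rm-\mathscr I$. Without positivity there is no input for B\"ohm--Wilking or Hamilton, so your ``shifted condition with a strict margin'' is asserted, not proved.

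The paper obtains the shifted condition as follows.
\begin{enumerate}
\item The identity $\ell_{\mathbb R^d\times C(L_F,h)}=r^{-2}\ell_h^{\mathrm{sph}}$ turns the stage-$(k+1)$ ambient Morrey bounds into link Morrey bounds for the spherical defect $\ell_h^{\mathrm{sph}}\ge\ell_h$.
\item The theorem is applied to a fixed rescaling of the link only to obtain uniform existence time, curvature and noncollapsing bounds.
\item One then argues by contradiction. Along a failing sequence, the link flows converge to a flow emerging from the spherical polyhedral link $X$, whose regular part has $\Rm=\mathscr I$.
\item Let $e$ be the defect of $\Rm-b(t)\mathscr I$, with $b(t)=(1-2(m-1)t)^{-1}$. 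Then $(\partial_t-\Delta)e\le4(m-1)be$.
\item Smooth convergence on the regular part and measured convergence across the null strata give $\int e\,d\mu_{g'_\infty(t)}\to0$ as $t\searrow0$.
\item Gronwall gives $e\equiv0$. Since $b(t)>1$, the shifted condition holds strictly, and smooth convergence passes it to the approximating link flows at a fixed positive time.
\end{enumerate}

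A second gap is in \textbf{(A2)}. Your claim that $\ell=O(a_k)r^{-2}$ pointwise in the frozen regions does not follow from \eqref{eq:freezing-curvature}. That estimate compares $\widetilde\gamma_F$ with the frozen cone, whose own defect is $r^{-2}\ell^{\mathrm{sph}}_{h_F(s)}$. Where $h_F(s)=\hat g_F(t_F(s))$ with $t_F(s)$ small, these early link-flow slices still carry large spherical defect on small sets. A small volume fraction does not make $\int\ell^{1+\beta}$ small, so your layer summation has no usable input.

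The missing ingredient is the paper's Claim~B: Morrey smallness of $\ell^{\mathrm{sph}}$ for the link-flow slices at small times. It is obtained in three steps.
\begin{enumerate}
\item Run the analytic theory on the links with a larger exponent $q\in(1+\beta,\frac32)$ and raise \textbf{(B6)} to the power $q$.
\item Use the evolving-volume bound from the proof of Lemma~\ref{lem_complicated_reproduction_formula}, together with a sum over Gaussian annuli.
\item Return to the exponent $1+\beta$ via H\"older's inequality and measured convergence.
\end{enumerate}
Only after this do the annular sums, which use $m+1\ge3>2(1+\beta)$, close.

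Relatedly, model (c) does not come from the inner round region. For $s\le0.3$ one gets cones over nearly round links, hence (b) or (d). Model (c) comes from frozen cones over early link slices seen at radial distance $\lambda_i\to\infty$ with $\lambda_i^2t_i\to t'_\infty\in(0,\infty)$.
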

	
	\begin{proof}
		We argue by induction on the dimension and, within each dimension,
		by decreasing induction on $k$.
		
		Suppose first that $k=n-2$. If $n=2$, the functions $\varphi_F$
		are constant and the caps have nonnegative curvature. Assume
		$n\geq3$, fix a codimension-two face $F$, and set
		$a:=a_{n-2}$ and $h:=a\varphi_F$. At
		$y_0\in\operatorname{Int}F$, compare the cap metric with the affine
		model obtained by replacing $h$ by its first-order Taylor polynomial
		at $y_0$. If $b:=|dh(y_0)|>0$, then, after rotating the face
		coordinates, the nontrivial factor of this model is the cone over
		\[
		B(u)\,du^2+\kappa_F^2\sin^2u\,d\theta^2,
		\qquad
		B(u)=1-\bigl(1-A_F(s)\bigr)
		\frac{1+b^2s^2}{1+s^2},
		\qquad
		s=\frac{\tan u}{b}.
		\]
		As $b\leq1$, one has
		$\kappa_F^2\leq B\leq1$ and $B_u\geq0$, and hence the Gaussian
		curvature of the link is
		\[
		K=\frac1B+\frac{\cot u\,B_u}{2B^2}\geq1.
		\]
		Thus the affine model has nonnegative cosectional curvature; when
		$b=0$, it is a product with the ordinary nonnegatively curved
		surface cap.
		
		In smooth normal Cartesian coordinates, comparison with this model
		and the derivative bounds at the end of Lemma~\ref{lem:face-scales-and-charts} give, throughout
		the modified region,
		\[
		\bigl|
		\Rm_{\widetilde\gamma_F}
		-
		\Rm_{\gamma_F^{\mathrm{lin}}}
		\bigr|
		\leq
		C\left(
		\frac{|D^2h|+|dh|\,|D^2h|}{h}
		+|D^2h|^2+|D^3h|
		\right)
		\leq
		Cd_{\partial F}^{-2}.
		\]
		The comparison remains smooth at the cone axis because
		$A_F=\kappa_F^2$ near zero, while outside the cap the metric is
		unchanged and flat. Consequently,
		\begin{equation}\label{eq:cap-defect}
		\ell_{g_{n-2}}
		\leq
		Cd_{\partial F}^{-2}
		\mathbf 1_{\{r\leq Ca_{n-2}d_{\partial F}\}}.
		\end{equation}

		Estimate~\eqref{eq:cap-defect} gives a uniform Morrey bound on balls
		of every radius. Indeed, let $d=d_{\partial F}$ at the center of a ball of
		radius $\rho$. If $\rho\leq a_{n-2}d$, then
		\[
		\rho^{2q-n}
		\int_{B(x,\rho)}
		\ell_{g_{n-2}}^q\,d\mu_{g_{n-2}}
		\leq
		C\left(\frac{\rho}{d}\right)^{2q}.
		\]
		If $a_{n-2}d\leq\rho\leq cd$, then the transverse area of the
		support gives
		\[
		\rho^{2q-n}
		\int_{B(x,\rho)}
		\ell_{g_{n-2}}^q\,d\mu_{g_{n-2}}
		\leq
		Ca_{n-2}^2
		\left(\frac{\rho}{d}\right)^{2q-2}.
		\]
		Near $\partial F$, the homogeneous face coordinates reduce the
		estimate to
		$
		Ca_{n-2}^2\int_0^{C\rho}s^{2-2q}\,ds,
		$
		together with the Euclidean factors tangent to the incident face.
		Thus, for every $1<q<\frac32$,
		\begin{equation}\label{eq:initial-cap-morrey}
		\sup_{x,\,0<\rho\leq1}
		\rho^{2q-n}
		\int_{B(x,\rho)}
		\ell_{g_{n-2}}^q\,d\mu_{g_{n-2}}
		\longrightarrow0
		\qquad\text{as }a_{n-2}\longrightarrow0.
		\end{equation}
		Here $q<\frac32$ is precisely the integrability condition at
		$\partial F$.
		
		We also verify the base case of Assertion~\ref{item:approximator-models}.
		Comparing $r_i$ with the distance to $\partial F$ and with the cap
		scale $a_{n-2,i}\varphi_F$, and using the homogeneity in
		Lemma~\ref{lem:face-scales-and-charts}, we obtain either a tangent
		cone of $P$ or the product of a Euclidean factor with a complete
		nonnegatively curved surface asymptotic to the normal cone of a
		codimension-two face. Away from $\partial F$, according as
		$a_{n-2,i}\varphi_F/r_i$ tends to zero, to a positive finite limit,
		or to infinity, the normal factor converges to the original flat
		cone, to a rescaled cap, or to its smooth tangent plane. If the
		rescaled distance to $\partial F$ remains bounded, the same argument
		passes to an incident lower-dimensional face and terminates after
		finitely many steps.
		
		Convergence is smooth away from the inherited strata. Since normal
		fibers at radius $r$ have diameter at most $Cr$ and measure at most
		$Cr^m$, it extends to pointed measured convergence of the metric
		completions. The limiting models have uniform two-sided volume bounds
		and satisfy the usual segment inequality. Together with
		\eqref{eq:initial-cap-morrey}, this proves
		Assertion~\ref{item:approximator-estimates} at the initial stage.
		If $n=2$, then $k=0$. Applying
		Theorem~\ref{Lp-RF-existence} to the capped metrics and using the
		preceding pointed measured convergence proves
		Assertion~\ref{item:approximator-flows} directly.
		
		Assume now that $k<n-2$ and that the assertions hold at the
		preceding stages. We retain the previously chosen bounds for
		$a_{k+1},\ldots,a_{n-2}$ and choose $a_k$ according to
		\eqref{eq:freezing-parameter-choice}. Thus the errors in
		\eqref{eq:freezing-metric}--\eqref{eq:freezing-curvature} tend to
		zero through an order tending to infinity.
		
		We first transfer the inductive estimates to the normal links.
		Consider an exact product-cone supplied by the preceding stage,
		$\mathbb R^d\times C(L_F^m,h)$, where $n=d+m+1$. Its defect is
		\begin{equation}\label{eq:ambient-link-defect}
		\ell_{\mathbb R^d\times C(L_F,h)}
		=
		r^{-2}\ell_h^{\mathrm{sph}},
		\qquad
		\ell_h^{\mathrm{sph}}
		:=
		\inf\left\{
		b\geq0:
		\Rm_h-\mathscr I_h+b\mathscr I_h
		\in\mathcal C_{\mathrm{geom}\geq0}
		\right\}.
		\end{equation}
		Indeed, the angular curvature operator is
		$r^{-2}(\Rm_h-\mathscr I_h)$ and all mixed curvatures vanish.
		Moreover, $\ell_h\leq\ell_h^{\mathrm{sph}}$, since
		$\mathscr I_h\in\mathcal C_{\mathrm{geom}\geq0}$.
		
		Fix a uniform scale $\sigma_0>0$ sufficiently small that the
		preceding product-cone charts contain the required interior shells
		at every link scale $\sigma\leq\sigma_0$. Applying the ambient
		estimates on such a shell and integrating over its radial and
		Euclidean factors gives uniform two-sided volume bounds for $h$ and
		\begin{equation}\label{eq:link-initial-morrey}
		\int_{B_h(z,\sigma)}
		(\ell_h^{\mathrm{sph}})^p\,d\mu_h
		\leq
		C\varepsilon\sigma^{m-2p},
		\qquad
		0<\sigma\leq\sigma_0.
		\end{equation}
		The generalized segment inequality transfers on the same scales: working in a fixed interior shell of radial size comparable to one,
		the ambient curves remain in a shell and face box whose radial and
		Euclidean widths are $O(\sigma)$, and integration in these
		$d+1$ directions converts the ambient density bound into
		$C\sigma^{-m}$ on the link. Endpoint separation controls the
		constant-speed reparametrization.
		
		We apply Theorem~\ref{Lp-RF-existence} to the fixed rescaling
		$\sigma_0^{-2}h$. Its scales at most one correspond precisely to the
		scales $\sigma\leq\sigma_0$ considered above. Thus the rescaled link
		metrics satisfy the hypotheses of that theorem uniformly; returning
		to $h$ changes the resulting constants and existence time only by
		fixed factors.

		We now establish a compactness statement for the link flows.
		
		\begin{claimA}
			Let $F$ be a face with $k\leq\dim F<n-2$, and let
			$\mathbf a_i=(a_{n-2,i},\ldots,a_{k+1,i})\to0$ satisfy
			\eqref{eq:admissible-smoothing-parameters}. Let $\lambda_i\geq1$
			with $\lambda_i\to\lambda_\infty\in[1,\infty]$, and consider the
			rescaled link flows
			\begin{equation}\label{eq:rescaled-link-flow}
			g'_i(t)
			=
			\lambda_i^2
			\hat g_{\mathbf a_i,F}
			\left(\frac{t}{\lambda_i^2}\right),
			\qquad
			0\leq t<\lambda_i^2T_{\mathbf a_i,F}.
			\end{equation}
			Suppose that
			$(L_F,g'_i(0),z_i)$ converges in the pointed
			Gromov--Hausdorff sense to $(X,d_X,z_\infty)$. If
			$\lambda_\infty<\infty$, then $X$ is a spherical polyhedral normal
			link of $P$, with its distance multiplied by $\lambda_\infty$; if
			$\lambda_\infty=\infty$, then $X$ is a Euclidean polyhedral space,
			a Euclidean factor times a cone over a smooth Riemannian orbifold,
			or a smooth Riemannian orbifold. The flows $g'_i(t)$ subconverge
			smoothly on compact positive-time subintervals to a maximal Ricci
			flow $g'_\infty(t)$ emerging from $X$ in the sense of
			Definition~\ref{def:flow-from-metric-space}.
			
			For any $t_i\searrow0$, set $t'_i=\lambda_i^2t_i$. If
			$t'_i\to0$, then $(L_F,g'_i(t'_i),z_i)$ converges to
			$(X,d_X,z_\infty)$ in the pointed measured Gromov--Hausdorff sense
			and locally smoothly wherever the initial convergence is smooth. If
			$t'_i\to t'_\infty\in(0,\infty)$, the slices converge smoothly to
			$g'_\infty(t'_\infty)$; if $t'_i\to\infty$, they subconverge
			smoothly to a flat limit. Finally, if $\lambda_\infty=1$, then
			\begin{equation}\label{eq:limiting-link-spherical-bound}
			\Rm_{g'_\infty(t)}-\mathscr I_{g'_\infty(t)}
			\in\mathcal C_{\mathrm{geom}\geq0},
			\end{equation}
			and $g'_\infty(t)$ develops a spherical singularity.
		\end{claimA}
		
		\begin{proof}[Proof of Claim A]
			The uniform link estimates allow us to apply
			Theorem~\ref{Lp-RF-existence}, yielding a common short-time interval,
			uniform curvature and noncollapsing bounds, and, by
			\textup{\textbf{(B6)}}, vanishing defect at positive times. If
			$\lambda_i\to\infty$, this interval becomes arbitrarily long after
			rescaling, and compactness gives convergence on every compact
			positive-time interval. If $(\lambda_i)$ remains bounded, compactness
			first gives convergence on a common short interval; continuous
			dependence and bounded-curvature uniqueness, applied successively
			from positive-time slices, extend it to every compact subinterval of
			the maximal limiting interval.
			
			Suppose that $t'_i\to0$. On compact subsets where the initial
			metrics converge smoothly, pseudolocality
			\cite[Theorem~10.3]{P1} and local derivative estimates give local
			smooth convergence at times $t'_i$. To obtain metric convergence,
			fix $0<\delta<1$ and choose a finite $\frac{\delta}{2}$-net of
			regular points in a sufficiently large bounded ball of $X$.
			Distances between regular points can be approximated by regular
			paths. Local smooth convergence along finitely many such paths
			controls expansion between the net points, while
			\eqref{net_timet} controls contraction by $C\sqrt{t'_i}$.
			The same contraction estimate places every point of a bounded
			time-$t'_i$ ball in a slightly larger initial ball. Using
			\textup{\textbf{(B4)}} and then letting $i\to\infty$ and
			$\delta\searrow0$ proves pointed Gromov--Hausdorff convergence to
			$X$.

			For measured convergence, write $m=\dim X$. We first note that
		$
			|B_{g_i'(t)}(z,r)|\leq Cr^m
			$
			uniformly on bounded regions. For $r\leq\sqrt t$, this follows from
			the Ricci lower bound supplied by \textnormal{\bf (B1)} and Bishop
			comparison. For $r\geq\sqrt t$, distance contraction places the
			evolving ball in an initial ball of radius $Cr$; covering the latter
			by $C(r/\sqrt t)^m$ initial balls of radius $\sqrt t$, using
			\textnormal{\bf (B4)} to place each in an evolving ball of radius
			$C\sqrt t$, and applying the preceding estimate gives the claim.
			The omitted strata have zero $m$-dimensional Hausdorff measure.
			Covering them by balls with arbitrarily small sums of the $m$-th
			powers of their radii gives neighborhoods of uniformly small
			measure in the approximating slices. Smooth convergence on the
			complement identifies the limiting measure.
			
			If $t'_i\to t'_\infty\in(0,\infty)$, smooth convergence follows
			from flow compactness. If $t'_i\to\infty$, then the short-time curvature estimate for the
			unrescaled link flow gives
			\[
			\left|\Rm_{g'_i(t'_i)}\right|
			=
			\lambda_i^{-2}
			\left|\Rm_{\hat g_{\mathbf a_i,F}(t_i)}\right|
			\leq
			\frac{C}{\lambda_i^2t_i}
			=
			\frac{C}{t'_i}
			\longrightarrow0.
			\]
			Together with noncollapsing, this gives a smooth flat subsequential
			limit.
			
			It remains to prove
			\eqref{eq:limiting-link-spherical-bound}. Let $e$ be the defect of
			$\Rm_{g'_\infty(t)}-b(t)\mathscr I_{g'_\infty(t)}$, where
			$b(t)=(1-2(m-1)t)^{-1}$, for small $t>0$.
			Nonnegative cosectional curvature gives $0\leq e\leq b$.
			The supporting-hyperplane argument, using \eqref{eq:Q-shift},
			Hamilton invariance and the nonnegative Ricci tensor of the shifted
			operator, gives
			\[
			(\partial_t-\Delta)e
			\leq
			b'-2(m-1)(b-e)^2
			\leq
			4(m-1)be
			\]
			in the barrier sense. Since scalar curvature is nonnegative,
			\[
			\frac{d}{dt}
			\int e\,d\mu_{g'_\infty(t)}
			\leq
			4(m-1)b(t)
			\int e\,d\mu_{g'_\infty(t)}.
			\]
			Smooth convergence on the regular part and measured convergence
			give $\int e\,d\mu_{g'_\infty(t)}\to0$ as $t\searrow0$.  
			Gronwall's inequality gives $e\equiv0$, and hence $
			\Rm_{g'_\infty(t)}-b(t) \mathscr I_{g'_\infty(t)}
			\in\mathcal C_{\mathrm{geom}\geq0}.$
			Since $b(t)>1$ for $t>0$, the shifted condition
			\eqref{eq:limiting-link-spherical-bound} is strict at every
			positive time. The rounding theorems \cite{BW2,HamSurfaces} imply that the
			limiting flow develops a spherical singularity.
		\end{proof}
		
		Claim A shows that the construction does not fail. Otherwise, after
		decreasing $\overline a_{n-2}$, there would be an admissible
		sequence of preceding parameter tuples tending to zero for which a
		link flow does not become round. Applying Claim A with $\lambda_i=1$ gives a limiting flow satisfying
		the strict shifted condition above, since $b(t)>1$ for every
		$t>0$. Smooth convergence therefore gives the same strict condition
		for the approximating link flows at a fixed positive time. The rounding theorems \cite{BW2,HamSurfaces} give a
		contradiction.
		
		B\"ohm--Wilking's theorem applies to orbifolds in dimensions at
		least three. The two-dimensional links are smooth surfaces, since
		their cone points were removed at the codimension two stage, so
		Hamilton's theorem applies. Smooth dependence on this compact
		positive-time family gives the normalized paths $h_F$ used in the
		construction.
		
		We now prove Assertion~\ref{item:approximator-models}. At fixed
		positive scales, the supports of the modifications shrink to
		$P^{(n-2)}$, giving the asserted limit when $r_\infty>0$. Assume that $r_\infty=0$. By
		\eqref{eq:freezing-metric}--\eqref{eq:freezing-curvature}, every
		rescaled filling annulus is asymptotic, through an order tending to
		infinity, to a cone with frozen link metric. All models outside the
		new filling regions are covered by the preceding induction stage.
		It therefore remains to consider pointed limits of
		$\mathbb R^{\dim F}\times C(L_F,h_i)$, where
		$h_i=h_{\mathbf a_i,F}(s_i)$.
		
		If $s_i\leq0.3$ along a subsequence, the link metrics converge
		smoothly to a unit-round metric. If the cone basepoints remain at
		bounded radial distance, the corresponding cones give a model in
		\ref{item:blowup-cones}; if they escape to infinity, the pointed
		limit is flat and belongs to \ref{item:blowup-flat}.
		
		Otherwise, $h_i=c_i\hat g_{\mathbf a_i,F}(t_i)$. If $t_i$ stays
		bounded away from zero, Claim A and the normalized round regime give
		smooth pointed limits of the link metrics. The corresponding cone
		limits belong to \ref{item:blowup-cones} when the radial basepoints
		remain bounded and to \ref{item:blowup-flat} when they escape to
		infinity.
		
		Suppose that $t_i\to0$. If the cone basepoints remain at bounded
		distance from the vertex, Claim A with $\lambda_i=1$ recovers an
		original spherical link, giving \ref{item:blowup-caps}. If the
		basepoints escape, set $\lambda_i=d(o,x'_i)\to\infty$ and
		$t'_i=\lambda_i^2t_i$. If $t'_i\to0$, the limit is covered by the
		preceding induction stage. If
		$t'_i\to t'_\infty\in(0,\infty)$, Claim A gives a model in
		\ref{item:blowup-inherited}. Restarting at such a slice only
		advances the same bounded-curvature flow, by standard short-time
		bounded-curvature uniqueness. If $t'_i\to\infty$, the limit is
		flat and belongs to \ref{item:blowup-flat}.
		
		Smooth annular convergence extends to the metric completions because
		the normal fibers at radius $r$ have diameter at most $Cr$ and
		measure at most $Cr^m$. The same argument applies across face
		boundaries by the exact conical compatibility of the construction.
		Thus no further models arise, proving
		Assertion~\ref{item:approximator-models}.
		
		We now prove the volume and segment parts of
		Assertion~\ref{item:approximator-estimates}. The volume bounds follow from the pointed measured limits in
		Assertion~\ref{item:approximator-models}. Since there are only
		finitely many stages and face charts, taking the minimum of the
		resulting lower-volume constants gives a single
		$v_0=v_0(P)>0$, valid for all sufficiently large $i$.
		
		 Each limiting model satisfies the usual segment inequality, and its
		intrinsic regular part is convex. The additional strata excluded
		from smooth convergence have zero $n$-dimensional Hausdorff measure,
		so almost every segment in the endpoint families avoids them
		(and, in the flat-cone case, also avoids the cone axis). Shrinking
		the endpoint balls from radius $\alpha$ to $\alpha/4$, we may
		therefore retain a positive-probability family of segments contained
		in a compact subset of the smooth-convergence region. Smooth
		convergence preserves these segments and keeps their endpoints in
		the original $\alpha$-balls. After constant-speed
		reparametrization, they give the generalized segment inequality,
		with $D$ uniform and $\Upsilon$ depending only on $\alpha$.
		
		It remains to prove \textup{\textbf{(A2)}}. The next claim provides the
		stronger-exponent estimate needed to transfer the link-flow bounds
		back to all balls in the filled metric.
		\begin{claimB}
			Fix $1<p<\frac32$. In the setting of Claim A, let $t_i\to0$, put
			$t'_i=\lambda_i^2t_i$ and $h_i=g'_i(t'_i)$, and write
			$m=\dim L_F$. Then the shifted defects
			$
			\ell'_i
			=
			\lambda_i^{-2}
			\ell_{\hat g_{\mathbf a_i,F}(t_i)}^{\mathrm{sph}}
			$
			satisfy
			\vspace*{-0.1cm}
			\begin{equation}\label{eq:link-p-morrey}
			\sup_{z,\,0<\sigma\leq1}
			\sigma^{2p-m}
			\int_{B_{h_i}(z,\sigma)}
			(\ell'_i)^p\,d\mu_{h_i}
			\longrightarrow0.
			\end{equation}
		\end{claimB}
		
		\begin{proof}[Proof of Claim B]
			Choose $p<q<\frac32$. The initial estimate
			\eqref{eq:link-initial-morrey}, applied with exponent $q$, and the
			evolving-volume estimate in the proof of
			Lemma~\ref{lem_complicated_reproduction_formula} give
			\begin{equation}\label{eq:link-flow-spatial-morrey}
			\int_{B_{h_i}(z,\sigma)}
			(\ell'_i)^q\,d\mu_{h_i}
			\leq
			C\left(
			\varepsilon_i\sigma^{m-2q}
			+
			\lambda_i^{-2q}\sigma^m
			\right),
			\qquad
			0<\sigma\leq1,
			\end{equation}
			where $\varepsilon_i=\varepsilon_i(q)\to0$. Indeed, we apply the analytic estimates with exponent
			$\beta=q-1$ and raise \textnormal{\bf (B6)} to the power $q$, so that
			the initial quantity being integrated is the $q$-th power of the
			defect. For $\sigma\leq\sqrt{t'_i}$, the estimate follows from the
			pointwise defect and volume bounds. If
			$\sigma\geq\sqrt{t'_i}$, distance contraction places the evolving
			ball in an enlarged initial ball. Fubini's theorem and the
			evolving-volume estimate from Lemma~\ref{lem_complicated_reproduction_formula} control the integration over
			its initial $\sqrt{t'_i}$-balls, while the contribution from outside
			the enlarged ball is summed over Gaussian annuli, as in the proof of
			Proposition~\ref{B5_improve}. The initial $q$-Morrey estimate then gives the first
			term in \eqref{eq:link-flow-spatial-morrey}, and the spherical shift
			gives the second.
			
			H\"older's inequality and the volume bound yield
			\begin{equation}\label{eq:link-holder}
			\sigma^{2p-m}
			\int_{B_{h_i}(z,\sigma)}
			(\ell'_i)^p\,d\mu_{h_i}
			\leq
			C\left(
			\varepsilon_i+
			\lambda_i^{-2q}\sigma^{2q}
			\right)^{p/q}.
			\end{equation}
			If $\lambda_i\to\infty$, this proves
			\eqref{eq:link-p-morrey}. Otherwise, for
			$\sigma\leq\sigma_0$ the limit superior in
			\eqref{eq:link-holder} is at most $C\sigma_0^{2p}$. For
			$\sigma\geq\sigma_0$, \eqref{eq:link-flow-spatial-morrey} gives a
			uniform total $L^q$ bound. Claim A gives measured convergence and
			smooth convergence away from the limiting strata, where
			$\ell'_i\to0$. Since these strata have zero $m$-dimensional measure,
			H\"older's inequality shows that
			$
			\int_{L_F}(\ell'_i)^p\,d\mu_{h_i}\longrightarrow0.
			$
			Letting $\sigma_0\searrow0$ proves
			\eqref{eq:link-p-morrey}.
		\end{proof}

		Fix now $\beta\in(0,\frac12)$ and put $p=1+\beta$. Choose
		$p<q<\frac32$. Claim B applies when the corresponding link-flow
		times tend to zero. At times bounded away from zero, the same
		conclusion follows from Claim A and smooth convergence, while for
		$s\leq0.3$ or $c_F(s)\neq1$ the spherical defect vanishes by
		construction. Thus, uniformly in $F$, $s$ and $z$,
		\begin{equation}\label{eq:interpolated-link-morrey}
		\int_{B_h(z,\sigma)}
		(\ell_h^{\mathrm{sph}})^p\,d\mu_h
		\leq
		\delta_i\sigma^{m-2p},
		\qquad
		0<\sigma\leq1,
		\qquad
		\delta_i\longrightarrow0.
		\end{equation}

		By \eqref{eq:freezing-curvature}, on every filling chart,
		\begin{equation}\label{eq:filling-defect}
		\ell_{\widetilde\gamma_F}
		\leq
		Cr^{-2}
		\left(
		\ell_{h_F(s)}^{\mathrm{sph}}+\eta_i
		\right),
		\qquad
		\eta_i\longrightarrow0.
		\end{equation}
		The estimate is uniform in the face point and in $s$, and the
		hierarchy defining $\eta_i$ is independent of $\beta$.
		
		Let $B$ be an ambient ball of radius $\rho$ whose center has radial
		distance $R$ from the cone axis. If $\rho\leq cR$, then $B$ lies in
		a uniformly controlled product box and its projection to the link
		has radius comparable to $\rho/R$. The face and radial factors
		contribute $C\rho^{d+1}$, while
		\eqref{eq:interpolated-link-morrey} and
		\eqref{eq:filling-defect} contribute
		\[
		R^{m-2p}
		\left(\frac{\rho}{R}\right)^{m-2p}
		=
		\rho^{m-2p}
		\]
		in the angular directions. Since $n=d+m+1$, we obtain
		\begin{equation}\label{eq:filling-ball-morrey}
		\int_B
		\ell_{\widetilde\gamma_F}^{\,p}\,d\mu
		\leq
		C(\delta_i+\eta_i^p)\rho^{n-2p}.
		\end{equation}

		If $\rho>cR$, decompose the normal tube intersecting $B$ into the
		annuli $2^{-j-1}\rho<r<2^{-j}\rho$. Their contributions are bounded
		by
		$
		C(\delta_i+\eta_i^p)
		2^{-j(m+1-2p)}\rho^{n-2p},
		$
		whose sum is finite because $m+1\geq3>2p$. Thus \eqref{eq:filling-ball-morrey} also holds for balls meeting the
		axis. It remains to consider balls crossing a face untreated at the
		current stage. On each incident filling tube, the chart of that face
		restricts the incident face-footpoints to a patch of volume at most
		$C\rho^d$ and its radial coordinate to $r\leq C\rho$. Hence its
		contribution is bounded by
		\[
		C(\delta_i+\eta_i^p)\rho^d
		\int_0^{C\rho}r^{m-2p}\,dr
		\leq
		C(\delta_i+\eta_i^p)\rho^{n-2p},
		\qquad
		n=d+m+1.
		\]
		The portions at radial distance larger than $C\rho$ are controlled
		by the product-box estimate. Since the incident faces and compatible
		charts have uniformly bounded multiplicity, while the genuinely
		unchanged region is covered by the preceding induction stage,
		\eqref{eq:filling-ball-morrey} follows for every ambient ball.
		Together with \eqref{eq:initial-cap-morrey} for the codimension-two
		caps, this completes Assertion~\ref{item:approximator-estimates}.

		Finally, if $k=0$, Assertion~\ref{item:approximator-estimates} and
		Theorem~\ref{Lp-RF-existence} give flows on a common time interval
		whose positive-time limit has nonnegative cosectional curvature.
		Assertion~\ref{item:approximator-models} and the distance estimates
		identify $(P,d_P)$ as its metric initial condition, proving
		Assertion~\ref{item:approximator-flows}.
	\end{proof}

	We are now in a position to put all the pieces together and prove the Smoothing Conjecture.
	\begin{proof}[Proof of Theorems~\ref{thm:smoothing} and~\ref{thm:smoothing-flow}]
		Choose an admissible sequence $\mathbf a_i\to0$. By
		Lemma~\ref{lem:approximator-properties} (ii) the corresponding orbifold metrics satisfy
		\textup{\textbf{(A1)--(A3)}} uniformly, with curvature defect tending to
		zero. Then, by Theorem~\ref{Lp-RF-existence}, the orbifold Ricci flows starting from
		$g_{\mathbf a_i}$ exist on a common interval $(0,\tau]$ satisfying
		\textup{\textbf{(B1)--(B6)}}.
		
		By Lemma~\ref{lem:approximator-properties} (iii), after passing to a subsequence, the flows converge
		smoothly on compact positive-time intervals to a Ricci flow
		$(\mathcal O,g_\infty(t))$ emerging from $(P,d_P)$ in the pointed
		sense. This convergence is in fact global; indeed, Assertion~{\rm(i)}
		gives
		$
		(M_i,g_i)\xrightarrow{GH}(P,d_P),
		$
		and hence the initial diameters are uniformly bounded. Subdividing an
		initial curve into segments of length at most one and applying
		\textup{\textbf{(B4)}} to each segment gives
		\[
		\diam_{g_i(t)}M_i
		\le
		2D\bigl(\diam_{g_i(0)}M_i+1\bigr),
		\qquad 0<t\le \tau.
		\]
		Thus the positive-time limit is compact. The global smooth orbifold
		convergence identifies its underlying space with that of $M_i$,
		which is homeomorphic to $P$. Now smooth convergence and the vanishing defect imply
		$
		\Rm_{g_\infty(t)}\in \mathcal C_{\mathrm{geom}\ge0}$ for 
		$t>0$, 
		while the definition of emergence from $P$ gives
		\[
		(\mathcal O,g_\infty(t))
		\xrightarrow[t\searrow0]{GH}
		(P,d_P).
		\]
		Thus any sequence $t_j\searrow0$ provides the required smooth
		orbifold approximations.
	\end{proof}

\end{document}